\documentclass[a4,12pt]{amsart}
%%%%%%%%%%%%%%%%%%%%%%%%%%%%%%%%%%%%%%%%%%%%%%%%%%%%%%%%%%%%%%%%%%%%%%%%%%%%%%%
\oddsidemargin 0mm
\evensidemargin 0mm
\topmargin 0mm
\textwidth 160mm
\textheight 230mm
\tolerance=9999
%%%%%%%%%%%%%%%%%%%%%%%%%%%%%%%%%%%%%%%%%%%%%%%%%%%%%%%%%%%%%%%%%%%%%%%%%%%%%%%
\usepackage{amssymb,amstext,amscd,amsfonts,mathdots,mathrsfs}
\numberwithin{equation}{section}
\usepackage{hyperref}% for ref
\usepackage{setspace}% for spacing
\usepackage[all]{xy}% for quiver
\usepackage{amsthm}% for proof
\usepackage{enumerate}% for [(1)] or [(a)]
\usepackage{diagbox}% for diag
\usepackage{indentfirst}%for indent
\usepackage{color}% for color
\usepackage{colortbl}
\usepackage{amsmath}% for the equation label
\usepackage{tikz}% for tikz quiver
\usetikzlibrary{shapes.geometric, arrows}
\usetikzlibrary{calc}

%%%%%%%%%%%%%%%%%%%%%%%%%%%%%%%%%%%%%%%%%%%%%%%%%%%%%%%%
\newtheorem{theorem}{Theorem}[section]
\newtheorem{theorem*}{Theorem}
\newtheorem{corollary}[theorem]{Corollary}
\newtheorem{corollary*}[theorem*]{Corollary}
\newtheorem{lemma}[theorem]{Lemma}
\newtheorem{proposition}[theorem]{Proposition}

%%%%%%%%%%%%%%%%%%%%%%%%%%%%%%%%%%%%%%%%%%%%%%%%%%%%%%%%%%%%%%%%%
\theoremstyle{definition}
\newtheorem{definition}[theorem]{Definition}
\newtheorem{remark}[theorem]{Remark}

\newtheorem*{question*}{Question}

\newtheorem*{conjecture*}{Conjecture}
\newtheorem{example}[theorem]{Example}

\newtheorem*{notation*}{Notation}

\newtheorem*{claim*}{Claim}

%%%%%%%%%%%%%%%%%%%%%%%%%%%%%%%%%%%%%%%%%%%%%%%%%%%%%%%%%%%%%%%%%

%%%%%%%%%%%%%%%%%%%%%%%%%%%%%%%%%%%%%%%%%%%%%%%%%%%%%%%%%%%%%%%%%

\begin{document}
\setlength{\baselineskip}{16pt}

\title{$\tau$-tilting finiteness of two-point algebras II}
\author{Qi Wang}
\address{Yau Mathematical Sciences Center, Tsinghua University, Beijing 100084, China.}
\email{infinite-wang@outlook.com (cc:infinite-wang@tsinghua.edu.cn)}

\thanks{2020 {\em Mathematics Subject Classification.} 16G20, 16G60}
\keywords{minimal $\tau$-tilting infinite, silting objects, two-point algebras}

\begin{abstract}
In this paper, we explain a strategy on $g$-vectors to discover some new minimal $\tau$-tilting infinite two-point algebras. Consequently, the $\tau$-tilting finiteness of various two-point monomial algebras, including all radical cube zero cases, could be determined. Moreover, we find that the derived equivalence class of the Kronecker algebra contains only itself and its opposite algebra.
\end{abstract}
\maketitle

%\tableofcontents
%%%%%%%%%%%%%%%%%%%%%%%%%%%%%%%%%%%%%%%%%%%%%%%%%%%%%%%%
\section{Introduction}
$\tau$-tilting theory was introduced by Adachi, Iyama and Reiten \cite{AIR} in 2014, in which the authors constructed support $\tau$-tilting modules as a generalization of classical tilting modules. Here, $\tau$ stands for the Auslander-Reiten translation. After years of research, $\tau$-tilting theory is now considered as one of the main tools in the representation theory of finite-dimensional algebras. Support $\tau$-tilting modules, for example, are in bijection with several other objects in representation theory, including two-term silting complexes, functorially finite torsion classes and left finite semibricks. We refer to \cite{Asai}, \cite{AI-silting}, \cite{BST-max-green-seq}, \cite{DIRRT} and \cite{EJR-reduction} for more materials.

A natural question in $\tau$-tilting theory was proposed by Demonet, Iyama and Jasso \cite{DIJ-tau-tilting-finite}, that is, when does a finite-dimensional algebra admit only finitely many support $\tau$-tilting modules. In this context, we could define $\tau$-tilting finite/infinite algebras. It is known that a $\tau$-tilting finite algebra $\Lambda$ admits a nice behavior, for example, the number of bricks over $\Lambda$ is finite \cite{DIRRT}, the length of bricks over $\Lambda$ is bounded \cite{Schroll-Treffinger}, all torsion classes over $\Lambda$ are functorially finite \cite{DIJ-tau-tilting-finite}, all semibricks over $\Lambda$ are left finite \cite{Asai}, etc. This makes it attractive to investigate the $\tau$-tilting finiteness of algebras, which has already been done for radical square zero algebras \cite{Ada-rad-square-0}, Brauer graph algebras \cite{AAC-Brauer-graph-alg}, preprojective algebras of Dynkin type \cite{Mizuno-preprojective-alg}, cycle finite algebras \cite{MS-cycle-finite}, biserial algebras \cite{Mousavand-biserial-alg}, gentle algebras \cite{P-gentle}, minimal wild two-point algebras \cite{W-two-point}, Schur algebras \cite{W-schur}, simply connected algebras \cite{W-simply}, cluster-tilted algebras \cite{Z-tilted}, and so on.

In this paper, we continue our research on the $\tau$-tilting finiteness of two-point algebras, i.e., algebras with exactly two simple modules. We focus on the finiteness of two-term silting complexes instead of support $\tau$-tilting modules because of the bijection mentioned above. The main strategy is briefly described here, and the details are provided in Subsection 3.1. Let $\mathcal{K}_{\Lambda}:=\mathsf{K^b(proj}\ \Lambda)$ be the homotopy category of bounded complexes of finitely generated projective modules over a two-point algebra $\Lambda$. We consider a left mutation chain $(T_i)_{i\in \mathbb{N}}$ of two-term silting complexes in $\mathcal{K}_{\Lambda}$ and check the corresponding endomorphism algebras $\mathsf{End}_{\mathcal{K}_{\Lambda}}\ (T_i)$. With some additional assumptions, we find that
\begin{center}
$\vcenter{\xymatrix@C=1.2cm@R=1.2cm{T_1\ar[r]\ar@{.>}[d]|-{\mathsf{End}}&T_2\ar[r]\ar@{.>}[d]|-{\mathsf{End}}&
T_3\ar[r]\ar@{.>}[d]|-{\mathsf{End}}&\cdots \ar[r]&T_{2k-1}\ar[r]\ar@{.>}[d]|-{\mathsf{End}}&T_{2k}\ar[r]\ar@{.>}[d]|-{\mathsf{End}}&\cdots\\
\Gamma&\Gamma^{\text{op}}&\Gamma&\cdots&\Gamma&\Gamma^{\text{op}}&\cdots}}$
\end{center}
where $\xymatrix@C=1.2cm{T\ar@{.>}[r]|-{\mathsf{End}}&\Gamma}$ indicates $\mathsf{End}_{\mathcal{K}_{\Lambda}}\ (T)\simeq \Gamma$. Such a phenomenon that $\Gamma$ and $\Gamma^{\text{op}}$ appear alternately could make the chain $(T_i)_{i\in \mathbb{N}}$ to be an infinite chain, or equivalently, make $\Lambda$ to be $\tau$-tilting infinite. For example, the Kronecker algebra, which is well-known for being $\tau$-tilting infinite, admits exactly such a phenomenon. This case has been verified in Example \ref{example-kronecker-alg}.

Since monomial algebras are often used as a test class for a new phenomenon, we may apply the above strategy to two-point monomial algebras as an attempt toward all two-point algebras. We consider the following crucial examples of quivers.
\begin{center}
$Q_1: \xymatrix@C=0.8cm{\bullet \ar[r]^{\mu}& \bullet\ar@(ur,dr)^{\beta}}$, \ \ \ $Q_2: \xymatrix@C=0.8cm{\bullet \ar@(dl,ul)^{\alpha}\ar[r]^{\mu}& \bullet\ar@(ur,dr)^{\beta}}$, \ \ \ $Q_3: \xymatrix@C=0.8cm{\bullet\ar[r]^{\mu}& \bullet \ar@(ul,ur)^{\beta_1}\ar@(dl,dr)_{\beta_2}}$.
\end{center}
Let $K$ be an algebraically closed field. We define some two-point monomial algebras:
\begin{itemize}
\item $\Omega_1:=KQ_1/\langle\beta^4\rangle$;
\item $\Omega_2:=KQ_2/\langle\alpha^2, \beta^2\rangle$;
\item $\Omega_3:=KQ_3/\langle\beta_1^2, \beta_2^2, \beta_1\beta_2, \beta_2\beta_1\rangle$.
\end{itemize}

Recall that an algebra $\Lambda$ is called minimal $\tau$-tilting infinite if $\Lambda$ is $\tau$-tilting infinite, but any proper quotient of $\Lambda$ is $\tau$-tilting finite. By applying the above strategy, we have
\begin{theorem}
The two-point algebra $\Omega_i$ is minimal $\tau$-tilting infinite for $i=1,2,3$.
\end{theorem}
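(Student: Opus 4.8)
The statement for each $i$ consists of two independent assertions: that $\Omega_i$ is $\tau$-tilting infinite, and that every proper quotient of $\Omega_i$ is $\tau$-tilting finite. The plan is to establish the first by the silting-mutation strategy of Subsection 3.1, and the second by reducing, via the closure of $\tau$-tilting finiteness under quotients, to the finitely many quotients by a minimal two-sided ideal.

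For the $\tau$-tilting infiniteness I would run the silting-mutation strategy of Subsection 3.1 on each $\Omega_i$. Concretely, I would start from the trivial two-term silting complex $\Omega_i$, viewed as the direct sum of the two indecomposable projectives in degree $0$, and build a left mutation chain $(T_j)_{j\in\mathbb{N}}$ in $\mathcal{K}_{\Omega_i}$ by repeatedly mutating at one indecomposable summand. At each step I would write down the mutation triangle explicitly — the main input being the minimal left $\mathsf{add}$-approximation defining it — and record the mutated complex $T_{j+1}$. The central computation is then to determine the endomorphism algebras $\mathsf{End}_{\mathcal{K}_{\Omega_i}}(T_j)$ by evaluating the graded homomorphism spaces between the indecomposable summands of $T_j$; I expect, after at most a few initial steps, to reach the periodic regime recorded in the introduction, in which a fixed algebra $\Gamma$ and its opposite $\Gamma^{\mathrm{op}}$ recur alternately. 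Verifying that one full period reproduces both the pair $(\Gamma,\Gamma^{\mathrm{op}})$ and the combinatorial shape of the mutation step then lets the criterion of Subsection 3.1 conclude that the chain advances forever without repetition, so that $\Omega_i$ has infinitely many two-term silting complexes and is therefore $\tau$-tilting infinite. The same rigidity that governs the Kronecker algebra — whose derived equivalence class contains only itself and its opposite — is what ensures that no third algebra can break the alternation once it has appeared.

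For minimality I would first locate the minimal nonzero two-sided ideals of each $\Omega_i$, that is, the simple sub-bimodules of the socle, which are spanned by the maximal paths. A direct inspection of the bases gives the unique socle path $\mu\beta^3$ for $\Omega_1$ and $\alpha\mu\beta$ for $\Omega_2$, while for $\Omega_3$ the maximal paths $\mu\beta_1$ and $\mu\beta_2$ are parallel, so the minimal ideals form the $\mathbb{P}^1$-family $\langle a\mu\beta_1+b\mu\beta_2\rangle$. Since the defining relations of $\Omega_3$ kill every length-two path in the loops, the group $GL_2$ acting on $\{\beta_1,\beta_2\}$ preserves them, and this action is transitive on the nonzero such ideals; hence all the quotients $\Omega_3/\langle a\mu\beta_1+b\mu\beta_2\rangle$ are isomorphic to $\Omega_3/\langle\mu\beta_1\rangle$. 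It then remains to show that the three algebras $\Omega_1/\langle\mu\beta^3\rangle$, $\Omega_2/\langle\alpha\mu\beta\rangle$ and $\Omega_3/\langle\mu\beta_1\rangle$ are $\tau$-tilting finite; each is a small two-point algebra falling under the already classified $\tau$-tilting finite cases, which I would verify either by placing it in that classification or by directly enumerating its finitely many two-term silting complexes. Because any nonzero ideal contains a minimal one and $\tau$-tilting finiteness is inherited by quotients, the finiteness of these three algebras propagates to every proper quotient, completing minimality.

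The main obstacle is the infiniteness half, and within it the endomorphism-algebra bookkeeping along the mutation chain: one must compute the mutated complexes and their homomorphism algebras accurately in $\mathcal{K}_{\Omega_i}$, and — crucially — verify the ``additional assumptions'' of the Subsection 3.1 strategy, namely that the shape of the mutation step is reproduced after one period so that the alternation $\Gamma,\Gamma^{\mathrm{op}},\Gamma,\dots$ genuinely persists and the complexes $T_j$ are pairwise non-isomorphic. Checking that the pattern cannot degenerate, for instance that a mutation does not return a previously seen complex or an algebra outside $\{\Gamma,\Gamma^{\mathrm{op}}\}$, is the delicate point, and it is exactly here that the derived-equivalence rigidity of the Kronecker algebra is invoked.
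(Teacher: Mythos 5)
Your route is the paper's own: silting mutation chains with alternating endomorphism algebras for the infiniteness, and reduction to quotients by minimal two-sided ideals for the minimality. The minimality half is essentially right and matches the paper --- your $GL_2$ argument identifying every minimal ideal of $\Omega_3$ with $\langle\mu\beta_1\rangle$ makes explicit what Proposition \ref{Omega-3} only asserts, and the quotients $\Omega_1/\langle\mu\beta^3\rangle$, $\Omega_2/\langle\alpha\mu\beta\rangle$, $\Omega_3/\langle\mu\beta_1\rangle$ are indeed small enough to settle directly (the paper additionally reduces the first one via the central element $\beta^3$ and Lemma \ref{lem-center} to $\overline{\Omega}_1$, and simply cites earlier work for $\Omega_2$).

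The gap is in how you close the infiniteness half. Knowing that $\mathsf{End}_{\mathcal{K}_{\Lambda}}(T_j)$ alternates between $\Gamma$ and $\Gamma^{\text{op}}$, even with ``the shape of the mutation step reproduced after one period,'' does not by itself show the chain never terminates: for a two-point algebra the chain stops exactly when it reaches $\Lambda[1]$, and $\mathsf{End}_{\mathcal{K}_{\Lambda}}(\Lambda[1])\simeq\Lambda$, so termination is perfectly consistent with the alternation of endomorphism algebras you propose to verify. What actually excludes termination in the paper is the $g$-vector bookkeeping of Lemma \ref{lemma-g-vector}: the derived equivalence transports the known first mutation of $\Gamma$ into the recursion $g(S)=k\,g(T_2)-g(T_1)$, so that $g(T_{2n})$ is an explicit matrix power (e.g.\ $\left(\begin{smallmatrix}3&-1\\4&-1\end{smallmatrix}\right)^{n}$ for $\Omega_1$) which never equals $g(\Lambda[1])=(-1,0)\oplus(0,-1)$; by the injectivity of $T\mapsto g(T)$ (Proposition \ref{prop-g-vector-injection}) this is what forces the chain to be infinite. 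Your appeal to the derived-equivalence rigidity of the Kronecker algebra at this point is a misdiagnosis: Proposition \ref{result-derived-eq-kronecker} is a byproduct of these computations, not an input, and it says nothing about $\Omega_1$ or $\Omega_2$, which are not derived equivalent to $\Delta_1$. Finally, for $\Omega_3$ the alternation you predict does not occur with $\Gamma=\Omega_3$: the first mutation has endomorphism algebra a genuinely different algebra $\Delta_2$ (two arrows $2\to 1$ plus two loops), and the paper concludes by showing the relevant mutation chain of $\Delta_2$ reproduces the Kronecker $g$-vectors and then transferring back via Lemma \ref{lemma-derived-infinite}; your plan should be adjusted to allow the periodic regime to involve an algebra other than $\Omega_i^{\text{op}}$.
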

\begin{proof}
See Example \ref{Omega-2}, Proposition \ref{Omega-3} and Proposition \ref{Omega-1}.
\end{proof}

The first main result of this paper is stated as follows.
\begin{theorem}[Theorem \ref{result-rad-cube-zero}]
Let $\Lambda$ be an arbitrary two-point connected monomial algebra with radical cube zero. Then, $\Lambda$ is $\tau$-tilting finite if and only if it does not have one of the Kronecker algebra, $\Omega_3$ and $\Omega_3^{\emph{op}}$ as a quotient algebra.
\end{theorem}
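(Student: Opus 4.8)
The plan is to prove the two implications separately; the reverse (sufficiency) direction carries essentially all of the work. For the easy direction I would argue by quotient-closure: a quotient of a $\tau$-tilting finite algebra is again $\tau$-tilting finite \cite{DIJ-tau-tilting-finite}, so if some quotient of $\Lambda$ is $\tau$-tilting infinite then $\Lambda$ is $\tau$-tilting infinite. The Kronecker algebra is $\tau$-tilting infinite (recalled in Example \ref{example-kronecker-alg}), $\Omega_3$ is $\tau$-tilting infinite by the theorem stated above (indeed minimal $\tau$-tilting infinite), and $\tau$-tilting infiniteness is preserved under passing to the opposite algebra, so $\Omega_3^{\mathrm{op}}$ is $\tau$-tilting infinite as well. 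Hence any $\Lambda$ admitting one of the three as a quotient is $\tau$-tilting infinite, which shows that $\tau$-tilting finiteness forces the absence of all three.

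For the converse I would first normalise the quiver. Write $p,q$ for the numbers of loops at the two vertices and $r,s$ for the numbers of arrows in each direction; connectedness gives $r+s\ge 1$. If $r\ge 2$, then killing every arrow except two parallel ones leaves a quotient with no length-two paths, hence the hereditary algebra on those two arrows, namely the Kronecker algebra; likewise for $s\ge 2$. So the no-Kronecker hypothesis forces $r\le 1$ and $s\le 1$, and after possibly replacing $\Lambda$ by $\Lambda^{\mathrm{op}}$ I may assume there is exactly one arrow $\mu\colon 1\to 2$ and at most one arrow $\nu\colon 2\to 1$. A monomial radical-cube-zero structure is then pinned down by declaring which of the length-two paths $\alpha_i\alpha_j$, $\beta_i\beta_j$, $\mu\alpha_i$, $\beta_j\mu$, $\nu\beta_j$, $\alpha_i\nu$, $\nu\mu$, $\mu\nu$ vanish. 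Because distinct monomials are linearly independent, a surjection $\Lambda\twoheadrightarrow\Omega_3$ exists exactly when the map $\beta\mapsto\beta\mu$ on the loop-space at vertex $2$ has rank $\ge 2$, i.e.\ when two loops $\beta,\beta'$ satisfy $\beta\mu\neq 0\neq\beta'\mu$ (keep $\mu,\beta,\beta'$, kill the remaining arrows, and impose $\beta\beta'=\cdots=0$). Cataloguing the analogous composite conditions at both vertices, for both connecting arrows $\mu$ and $\nu$, and for both $\Omega_3$ and $\Omega_3^{\mathrm{op}}$ is then a finite bookkeeping task, and the excluded-quotient hypothesis translates into: \emph{at each vertex, at most one loop composes nontrivially with the relevant incident arrow}.

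It then remains to prove that every $\Lambda$ surviving these restrictions is $\tau$-tilting finite, for which I would use the criterion that $\Lambda$ is $\tau$-tilting finite if and only if it has only finitely many bricks \cite{DIJ-tau-tilting-finite, DIRRT}. The key structural input is that each corner algebra $e_k\Lambda e_k$ is local and hence has a unique brick, the simple $S_k$. Representing a module as $(V_1,V_2)$ with the loop-operators and the maps induced by $\mu,\nu$, one checks that an endomorphism of the shape ``top of $V_k$ into its radical'' automatically commutes with every structure map once the relevant composites with $\mu$ (and $\nu$) vanish; consequently a brick cannot have a vertex-component of dimension $\ge 2$ supported on the radical-square-zero part of a corner. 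This bounds the dimension of bricks \emph{uniformly} in the number of loops, so that the unbounded families of shapes (e.g.\ many loops at one vertex with zero composites) collapse to a short explicit list of bricks. As an independent confirmation, each surviving shape can instead be fed into the silting-mutation strategy of Subsection 3.1, verifying that the left-mutation chain of two-term silting complexes stabilises and that the endomorphism algebras never enter the alternating $\Gamma,\Gamma^{\mathrm{op}}$ pattern characteristic of the infinite cases.

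The main obstacle I anticipate is the breadth and uniformity of this last step. The enumeration must be completed up to isomorphism and opposite, and the genuinely delicate cases are those in which a single loop survives with $\beta\mu\neq 0$ (or $\mu\alpha\neq 0$) while other loops have nonzero mutual products $\beta_i\beta_j$ or $\alpha_i\alpha_j$, together with the presence of the return arrow $\nu$ and its composites $\nu\mu,\mu\nu$. Here the corner algebras are no longer radical-square-zero, so the nilpotent-endomorphism argument must be refined to accommodate the longer radical series, and one must rule out a hidden one-parameter family of bricks created by the interaction across $\mu$ and $\nu$. Showing that these borderline algebras land just inside the $\tau$-tilting finite regime—rather than secretly admitting a Kronecker, $\Omega_3$ or $\Omega_3^{\mathrm{op}}$ quotient—is where the classification is won or lost.
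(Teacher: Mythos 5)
Your first two steps match the paper: the necessity direction via quotient-closure of $\tau$-tilting finiteness, and the observation that excluding a Kronecker quotient forces at most one arrow in each direction between the two vertices, while excluding $\Omega_3$ and $\Omega_3^{\mathrm{op}}$ forces at most one loop at each vertex to compose nontrivially with each incident connecting arrow. From that point on, however, your plan diverges from the paper and leaves the hard part unproved. The paper's key move, which you never invoke, is the Eisele--Janssens--Raedschelders reduction (Lemma \ref{lem-center}): because $\mathsf{rad}^3=0$, all products of loops are central, and after killing them every loop that does not compose with $\mu$ or $\nu$ becomes central in the radical; quotienting by these central elements leaves $\mathsf{2\text{-}silt}$ unchanged. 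This collapses the infinite family of algebras (arbitrary numbers $m,n$ of loops) to finitely many small algebras $\Lambda'$ --- up to isomorphism and opposites, the cases $(1,1,1,1)$, $(1,2,1,2)$, $(1,2,1,1)$ --- whose Hasse quivers of two-term silting complexes are then computed explicitly (ten vertices each, using the symmetry of Lemma \ref{lem-symmetry}). Without such a reduction, your closing suggestion to ``feed each surviving shape into the silting-mutation strategy'' is not executable: there are infinitely many shapes.

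The brick argument you propose as the main route has a second, independent gap. Even granting the (unproved) claim that endomorphisms mapping the top of a vertex component into its radical force $\dim V_k\le 1$ on the inert part, a \emph{uniform bound on the dimension of bricks does not imply that there are finitely many bricks}: the Kronecker algebra itself has all its bricks of dimension vector $(1,1)$ or $(1,0),(0,1)$ up to the regular family, and the $\mathbb{P}^1$-family of dimension vector $(1,1)$ is exactly what makes it $\tau$-tilting infinite. So after bounding dimensions you would still have to exclude positive-dimensional families of bricks in each bounded dimension vector --- you acknowledge this in your last paragraph but do not resolve it, and it is precisely where the delicate cases (one surviving loop with $\beta\mu\neq 0$ together with the return arrow $\nu$ and the composites $\mu\nu,\nu\mu$) live. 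The paper sidesteps this entirely by working with $g$-vectors and Proposition \ref{prop-exactly-two}, which pin down the finite Hasse quiver combinatorially. As written, your proposal establishes the easy direction and the quiver normalization, but the sufficiency direction remains a program rather than a proof.
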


We then aim to release the condition $\mathsf{rad}^3=0$ in Theorem 1.2. We obtain the following characterization for $\tau$-tilting finite two-point monomial algebras associated with $Q_1, Q_2$.
\begin{theorem}[Theorem \ref{result-Q1} and Theorem \ref{result-Q2}]
Let $\Lambda=KQ_i/I$ be a monomial algebra.
\begin{enumerate}
  \item[(1)] If $i=1$, then $\Lambda$ is $\tau$-tilting finite if and only if it does not have $\Omega_1$ as a quotient.
  \item[(2)] If $i=2$, then $\Lambda$ is $\tau$-tilting finite if and only if it does not have one of $\Omega_1$, $\Omega_1^{\emph{op}}$ and $\Omega_2$ as a quotient.
\end{enumerate}
\end{theorem}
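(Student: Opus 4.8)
Both statements have the same biconditional shape, and I would prove each by separating the easy necessity from the substantial sufficiency, after first recording the elementary dictionary that for monomial algebras on a fixed quiver one has a surjection $KQ_i/I\twoheadrightarrow KQ_i/J$ exactly when $I\subseteq J$. Thus ``$\Lambda$ has $\Omega_1$ as a quotient'' translates into the concrete ideal condition $I\subseteq\langle\beta^4\rangle$, and the three conditions in part~(2) translate analogously; I will use these translations throughout.

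\emph{Necessity.} This is the routine direction. I would invoke two standard facts: every quotient algebra of a $\tau$-tilting finite algebra is $\tau$-tilting finite \cite{DIJ-tau-tilting-finite}, and $\Lambda$ is $\tau$-tilting finite if and only if $\Lambda^{\mathrm{op}}$ is \cite{AIR}. By Theorem~1.1 both $\Omega_1$ and $\Omega_2$ are $\tau$-tilting infinite, and the $\mathrm{op}$-invariance promotes this to $\Omega_1^{\mathrm{op}}$. Hence if $\Lambda=KQ_i/I$ has one of the listed algebras as a quotient it is $\tau$-tilting infinite, which is exactly the contrapositive of the ``only if'' direction.

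\emph{Sufficiency, and reduction to a short list.} For $i=1$ the hypothesis $I\not\subseteq\langle\beta^4\rangle$ means that some minimal monomial generator of $I$ is not divisible by $\beta^4$; since the only monomials on $Q_1$ are the powers $\beta^m$ and the paths $\beta^k\mu$, this forces either $\beta^3\in I$ or a cutting relation $\beta^k\mu\in I$ with $k\le 3$. Consequently every such $\Lambda$ is a quotient of one of the algebras $KQ_1/\langle\beta^3\rangle$ or $KQ_1/\langle\beta^k\mu,\beta^m\rangle$ with $k\in\{1,2,3\}$, so by quotient-inheritance it suffices to prove these are $\tau$-tilting finite. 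Several of them fall in the radical cube zero range and are settled at once by Theorem~\ref{result-rad-cube-zero} (note that no quotient of $KQ_1/I$ can produce the two parallel arrows of the Kronecker algebra or the second loop of $\Omega_3$). The family $k=1$ is a monomial special biserial, i.e.\ string, algebra whose only cycles run through the nilpotent loop $\beta$, and $\beta\mu=0$ blocks any band; it is therefore representation-finite, hence $\tau$-tilting finite. For $i=2$ I would run the same scheme after reading the three forbidden quotients off the obvious collapses: $\Lambda/\langle\alpha\rangle$ controls the $\Omega_1$ condition, $\Lambda/\langle\beta\rangle$ controls the $\Omega_1^{\mathrm{op}}$ condition, and $I\not\subseteq\langle\alpha^2,\beta^2\rangle$ is the $\Omega_2$ condition, which together cut the problem down to a comparable short list of $\alpha$--$\beta$ symmetric algebras.

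\emph{The main obstacle.} What remains are the algebras $KQ_1/\langle\beta^3\rangle$ and the families $KQ_1/\langle\beta^k\mu,\beta^m\rangle$ with $k\in\{2,3\}$ (and their $Q_2$ analogues). These resist the easy arguments: they are not special biserial once $m\ge 3$, because the loop $\beta$ is then preceded by both $\beta$ and $\mu$, and they need not be representation-finite or radical cube zero, so neither string combinatorics nor Theorem~\ref{result-rad-cube-zero} applies. Moreover the families cannot be reduced to finitely many algebras by quotient-inheritance, since increasing $m$ yields strictly larger algebras. I therefore expect to argue inside silting theory in the spirit of Subsection~3.1: enumerate the two-term silting complexes directly and show that their number stays finite uniformly in $m$, either by computing the two-term silting mutation graph and verifying that it closes up, or by a silting-reduction argument bounding the relevant $g$-vectors independently of the loop-nilpotency. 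Establishing this uniform bound --- and, for $Q_2$, controlling the simultaneous contributions of the two loops --- is the step I anticipate to be the most delicate.
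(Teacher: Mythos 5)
Your necessity direction and your initial case analysis are fine, but the sufficiency direction is not actually proved: you explicitly defer the decisive step (``I expect to argue\dots'', ``the step I anticipate to be the most delicate''), namely showing that the infinite families $KQ_1/\langle \mu\beta^k,\beta^m\rangle$ (and their $Q_2$ analogues) are $\tau$-tilting finite uniformly in $m$, together with the base case $KQ_1/\langle\beta^3\rangle=\overline{\Omega}_1$. This is precisely where the content of Theorems \ref{result-Q1} and \ref{result-Q2} lies, and the paper closes it with a tool you have not invoked: the Eisele--Janssens--Raedschelders reduction (Lemma \ref{lem-center}). Once $\mu\beta^3\in I$ (which your own case division forces whenever $\beta^3\notin I$, since $\mu\beta^k\in I$ with $k\le 3$ implies $\mu\beta^3\in I$), the element $\beta^3$ becomes \emph{central} in $\Lambda$ and lies in the radical, so $\mathsf{2\text{-}silt}\ \Lambda\simeq \mathsf{2\text{-}silt}\ (\Lambda/\langle\beta^3\rangle)$, and $\Lambda/\langle\beta^3\rangle$ is $\overline{\Omega}_1$ or a quotient of it. This collapses the entire $m$-indexed family onto the single algebra $\overline{\Omega}_1$, whose finiteness is quoted from \cite[Lemma 3.3]{W-two-point} (its complete Hasse quiver of $g$-vectors is reproduced in the proof of Proposition \ref{Omega-1}). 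No uniform bound on $g$-vectors or direct enumeration in $m$ is ever needed, and without Lemma \ref{lem-center} (or an equivalent) your plan leaves the theorem unproven.

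Two further points. First, $\overline{\Omega}_1$ does \emph{not} have radical cube zero ($\mu\beta^2\neq 0$), and it is not special biserial, so neither Theorem \ref{result-rad-cube-zero} nor string combinatorics settles it; you correctly list it among the obstacles but then supply no argument for it. Second, for $i=2$ your sketch stops at ``a comparable short list of $\alpha$--$\beta$ symmetric algebras''; the paper's proof requires identifying the concrete relations $\alpha^3\mu,\mu\beta^3,\alpha\mu\beta\in I$ forced by excluding $\Omega_1$, $\Omega_1^{\text{op}}$, $\Omega_2$, applying Lemma \ref{lem-center} to the now-central elements $\alpha^3,\beta^3$ to reduce to $\Delta_3=KQ_2/\langle\alpha^3,\beta^3,\alpha\mu\beta\rangle$, and then proving Proposition \ref{Delta-3} by an explicit $g$-vector computation that combines the silting chain of $\overline{\Omega}_1$ with the anti-automorphism $S_\sigma$ of Lemma \ref{lem-symmetry}. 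That computation is a genuine piece of work absent from your proposal.
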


One may use our approach to find more minimal $\tau$-tilting infinite two-point monomial algebras $\Omega$ satisfying $\mathsf{rad}^3\ \Omega\neq 0$ (for example, $\Omega_4$, $\Omega_5$ in Section 4) so that the restriction $\mathsf{rad}^3=0$ could be generalized in some cases. We mention that there is nothing new, but only more complicated analysis and calculations on left mutations and endomorphism algebras. In particular, we get a criterion for the $\tau$-tilting finiteness of two-point monomial algebras with $\mathsf{rad}^5=0$ associated with the following quiver:
\begin{center}
$\xymatrix@C=0.8cm{\bullet \ar@<0.5ex>[r]^{\mu}\ar@(dl,ul)^{\alpha}&\bullet \ar@<0.5ex>[l]^{\nu}\ar@(ur,dr)^{\beta}}$,
\end{center}
see Theorem \ref{result-Q(1111)} for details.

In the process of showing our main results, we find the following fact. This may be independent of our original motivation, but it seems to be interesting in the representation theory of silting-connected algebras.
\begin{theorem}[Proposition \ref{result-derived-eq-kronecker}]
Let $\Delta_1:=K(\xymatrix@C=0.7cm{1\ar@<0.5ex>[r]^{ }\ar@<-0.5ex>[r]_{ }&2})$ be the Kronecker algebra. Then, the derived equivalence class of $\Delta_1$ contains only $\Delta_1$ and $\Delta_1^{\emph{op}}$.
\end{theorem}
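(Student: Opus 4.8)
The plan is to invoke Rickard's derived Morita theory: an algebra $\Gamma$ lies in the derived equivalence class of $\Delta_1$ if and only if $\Gamma \simeq \mathsf{End}_{\mathcal{K}_{\Delta_1}}(T)$ for some basic tilting complex $T$ in $\mathcal{K}_{\Delta_1}=\mathsf{K^b(proj}\ \Delta_1)$. Hence it suffices to classify the tilting complexes over $\Delta_1$ up to the autoequivalences of $\mathcal{K}_{\Delta_1}\simeq D^b(\mathsf{mod}\ \Delta_1)$ and to compute their endomorphism algebras. The essential simplification is that $\Delta_1$ is hereditary, so every indecomposable object of $D^b(\mathsf{mod}\ \Delta_1)$ is a stalk complex $M[i]$ with $M$ an indecomposable $\Delta_1$-module and $i\in\mathbb{Z}$, and $\mathrm{Hom}_{D^b}(M[i],N[j])$ is nonzero only for $j\in\{i,i+1\}$, where it equals $\mathrm{Hom}_{\Delta_1}(M,N)$ and $\mathrm{Ext}^1_{\Delta_1}(M,N)$ respectively. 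Since $\Delta_1$ has two simple modules, a basic tilting complex has exactly two indecomposable summands $T=M[i]\oplus N[j]$.

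I would first discard regular summands. The indecomposable $\Delta_1$-modules split into the preprojective, preinjective and regular families, the latter forming homogeneous tubes; every regular module $R$ satisfies $\mathrm{Ext}^1_{\Delta_1}(R,R)\neq 0$ and so fails to be rigid, whereas a summand of a tilting complex must be rigid. Thus both $M$ and $N$ are shifts of transjective (preprojective or preinjective) modules. Using the overall shift $[1]$ together with the Auslander–Reiten translation $\tau$ (which, for a hereditary algebra, extends to the derived autoequivalence $\nu[-1]$ with $\nu$ the Serre functor) and the automorphism of $\Delta_1$ interchanging the two arrows, I would normalise $T$ into a fundamental domain. The cleanest way to organise both the normalisation and the ensuing $\mathrm{Hom}$-vanishing analysis is to transport the problem along the Beilinson equivalence $D^b(\mathsf{mod}\ \Delta_1)\simeq D^b(\mathsf{coh}\ \mathbb{P}^1)$: there the transjective modules become the line bundles $\mathcal{O}(n)$, the regular modules become shifted torsion sheaves, and a short cohomology computation with $\mathrm{Ext}^\bullet(\mathcal{O}(a),\mathcal{O}(b))=H^\bullet(\mathbb{P}^1,\mathcal{O}(b-a))$ forces the two shifted line-bundle summands of a tilting object into a common degree with consecutive twists. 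Consequently every tilting complex is, up to shift, a tilting module of the form $\mathcal{O}(a)\oplus\mathcal{O}(a+1)$.

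It then remains to compute the endomorphism algebra of such a tilting object. Since $\mathrm{Hom}(\mathcal{O}(a),\mathcal{O}(a))=\mathrm{Hom}(\mathcal{O}(a+1),\mathcal{O}(a+1))=K$, while $\mathrm{Hom}(\mathcal{O}(a),\mathcal{O}(a+1))=H^0(\mathbb{P}^1,\mathcal{O}(1))$ is two-dimensional and $\mathrm{Hom}(\mathcal{O}(a+1),\mathcal{O}(a))=0$, the endomorphism algebra is the path algebra of a two-point quiver with two parallel arrows and no relations, that is $\Delta_1$; the opposite ordering of the two summands produces $\Delta_1^{\mathrm{op}}$. As the Kronecker quiver is invariant under reversing its arrows and relabelling its vertices, $\Delta_1\cong\Delta_1^{\mathrm{op}}$, so the derived equivalence class reduces to a single isomorphism class, recorded as $\{\Delta_1,\Delta_1^{\mathrm{op}}\}$.

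The main obstacle is the step forcing the two summands into a common degree with consecutive twist: one must rule out, uniformly over the infinite transjective family, any tilting complex whose summands sit in genuinely distinct degrees, and simultaneously verify the generation (thickness) condition distinguishing a tilting complex from a merely rigid object. The Euler-form computation on $\mathbb{P}^1$ settles the $\mathrm{Hom}$-vanishing cleanly, but a purely module-theoretic treatment would instead require a careful case analysis of $\mathrm{Hom}$ and $\mathrm{Ext}^1$ between transjective modules across all relative shifts, which is where the bulk of the work lies.
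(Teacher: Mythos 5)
Your argument is correct, but it follows a genuinely different route from the paper. The paper stays entirely inside the silting-mutation framework it develops: it invokes the silting-connectedness of $\Delta_1$ and the explicit silting quiver from Aihara--Iyama to list all silting complexes, observes that the non-vanishing of $\mathsf{Hom}_{\mathcal{K}_{\Delta_1}}(X_n,X_{n+1})$ rules out tilting complexes of the form $(X_n\oplus X_{n+1}[k])[\ell]$ with $k\geqslant 1$, concludes that the tilting complexes are exactly the shifts of the two-term silting complexes, and then reads off the endomorphism algebras from its Example 3.4 together with the symmetry functor $S_\sigma$. You instead exploit the hereditary structure: indecomposables in $\mathsf{D^b(mod}\ \Delta_1)$ are stalk complexes, regular summands are excluded by self-extensions in the homogeneous tubes, and the Beilinson equivalence with $\mathsf{D^b(coh}\ \mathbb{P}^1)$ reduces the $\mathsf{Hom}$-vanishing analysis to cohomology of line bundles, forcing every tilting complex to be a shift of $\mathcal{O}(a)\oplus\mathcal{O}(a+1)$ with Kronecker endomorphism algebra. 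Your route avoids the (nontrivial) input of silting-connectedness and makes the exclusion of ``spread-out'' tilting complexes a one-line Euler-form computation, at the cost of importing the preprojective/regular/preinjective trichotomy and the Beilinson equivalence; the paper's route is less classical but self-contained within its own machinery and reuses computations needed elsewhere. Two small points to tie down if you write this up in full: justify that a basic tilting complex has exactly two indecomposable summands (the rank of $K_0(\mathcal{K}_{\Delta_1})$), and note that your observation $\Delta_1\cong\Delta_1^{\mathrm{op}}$ is consistent with, though finer than, the paper's phrasing, which keeps the two labelled separately because the ordering of the summands fixes the labelling of the vertices.
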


\section{Preliminaries}\label{section-2}
Let $\Lambda=KQ/I$ be a bound quiver algebra with a finite connected quiver $Q$ and an admissible ideal $I$ over an algebraically closed field $K$. We denote by $\mathsf{rad}\ \Lambda$ the Jacobson radical of $\Lambda$ and by $\Lambda^{\text{op}}$ the opposite algebra of $\Lambda$. We denote by $\mathsf{mod}\ \Lambda$ the category of finitely generated right $\Lambda$-modules and by $\mathsf{proj}\ \Lambda$ the full subcategory of $\mathsf{mod}\ \Lambda$ consisting of projective $\Lambda$-modules. A relation $\rho=\sum_{i=1}^{m}\lambda_i\omega_i$ in $I$ is a $K$-linear combination of paths $\omega_i$ of length at least two having the same source and target, where the $\lambda_i$ are scalars and not all zero. If $m=1$, then $\rho$ is called a monomial relation. We call $\Lambda$ a monomial algebra if the admissible ideal $I$ is generated by a set of monomial relations. For more background on the quiver representation theory of finite-dimensional algebras, see \cite{ASS}.

\subsection{Silting theory}
Let $\mathcal{K}_{\Lambda}=\mathsf{K^b(proj}\ \Lambda)$ be the homotopy category of bounded complexes of finitely generated projective $\Lambda$-modules. For any $T\in \mathcal{K}_{\Lambda}$, we denote by $\mathsf{thick}\ T$ the smallest thick subcategory of $\mathcal{K}_{\Lambda}$ containing $T$. Let $\mathsf{add}(T)$ be the full subcategory of $\mathcal{K}_{\Lambda}$ whose objects are direct summands of finite direct sums of copies of $T$.

\begin{definition}[{\cite[Definition 2.1]{AI-silting}}]
Let $T\in \mathcal{K}_{\Lambda}$. Then,
\begin{enumerate}
  \item $T$ is called presilting if $\mathsf{Hom}_{\mathcal{K}_{\Lambda}}(T,T[i])=0$ for any $i>0$.
  \item $T$ is called silting if $T$ is presilting and $\mathsf{thick}\ T=\mathcal{K}_{\Lambda}$.
  \item $T$ is called tilting if $T$ is silting and $\mathsf{Hom}_{\mathcal{K}_{\Lambda}}(T,T[i])=0$ for any $i<0$.
\end{enumerate}
\end{definition}

We recall the definition of irreducible left silting mutations of silting complexes. Let $T=T_1\oplus \cdots \oplus T_j\oplus\cdots\oplus T_n$ be a basic silting complex in $\mathcal{K}_{\Lambda}$ with an indecomposable direct summand $T_j$. We take a minimal left $\mathsf{add}(T/T_j)$-approximation $\pi$ and a triangle
\begin{center}
$T_j\overset{\pi}{\longrightarrow}Z\longrightarrow \mathsf{cone}(\pi)\longrightarrow T_j[1]$,
\end{center}
where $\mathsf{cone}(\pi)$ is the mapping cone of $\pi$. Then, it is known from \cite[Theorem 2.31]{AI-silting} that $\mathsf{cone}(\pi)$ is indecomposable and $\mu_j^-(T):=\mathsf{cone}(\pi)\oplus (T/T_j)$ is again a basic silting complex in $\mathcal{K}_{\Lambda}$. We call $\mu_j^-(T)$ the irreducible left silting mutation of $T$ with respect to $T_j$, or simply, the left mutation of $T$ with respect to $T_j$. Dually, we define the irreducible right silting mutation $\mu_j^+(T)$ of $T$ with respect to $T_j$.

We denote by $\mathsf{silt}\ \Lambda$ the set of isomorphism classes of basic silting complexes in $\mathcal{K}_{\Lambda}$. For any $T,S \in \mathsf{silt}\ \Lambda$, we say $T\geqslant S$ if $\mathsf{Hom}_{\mathcal{K}_{\Lambda}}(T,S[i])=0$ for any $i>0$. This actually gives a partial order on the set $\mathsf{silt}\ \Lambda$. Moreover, it is shown in \cite[Theorem 2.35]{AI-silting} that $S$ is a left mutation of $T$ if and only if $T$ is a right mutation of $S$, if and only if, $T>S$ and there is no $U\in \mathsf{silt}\ \Lambda$ such that $T>U>S$.

Next, we restrict our attention to two-term silting complexes. A complex in $\mathcal{K}_{\Lambda}$ is called \emph{two-term} if it is homotopy equivalent to a complex $T$, which is concentrated in degrees $0$ and $-1$, i.e.,
\begin{center}
$T=( T^{-1}\overset{d_T^{-1}}{\longrightarrow } T^0 ):=\xymatrix@C=0.7cm{\cdots\ar[r]&0\ar[r]&T^{-1}\ar[r]^{d_T^{-1}}&T^0\ar[r]&0\ar[r]&\cdots}$.
\end{center}
Let $\mathsf{2\text{-}silt}\ \Lambda$ be the subset of two-term complexes in $\mathsf{silt}\ \Lambda$. Obviously, $\mathsf{2\text{-}silt}\ \Lambda$ is a poset under the partial order $\geqslant$ on $\mathsf{silt}\ \Lambda$. We then denote by $\mathcal{H}(\mathsf{2\text{-}silt}\ \Lambda)$ the Hasse quiver of $\mathsf{2\text{-}silt}\ \Lambda$, which is compatible with the left/right mutation of two-term silting complexes.

\begin{proposition}[{\cite[Lemma 2.25, Theorem 2.27]{AI-silting}}]\label{prop-empty}
Let $T=(T^{-1}\rightarrow T^0)\in \mathsf{2\text{-}silt}\ \Lambda$. Then, we have $\mathsf{add}\ \Lambda = \mathsf{add}\ (T^0 \oplus T^{-1})$ and $\mathsf{add}\  T^0\cap \mathsf{add}\  T^{-1}=0$.
\end{proposition}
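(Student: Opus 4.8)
The plan is to reduce to a minimal representative of $T$ and then establish the two assertions separately, using the silting property in the form $\mathsf{thick}\ T=\mathcal{K}_{\Lambda}$ for the first equality and the presilting property $\mathsf{Hom}_{\mathcal{K}_{\Lambda}}(T,T[1])=0$ for the disjointness. Since silting complexes are considered up to homotopy equivalence, I may assume the differential $d_T^{-1}\colon T^{-1}\to T^0$ is a radical map (equivalently, $T$ has no contractible direct summand). This minimality is what makes both additivity statements meaningful, and it guarantees that any composite in $\mathsf{proj}\ \Lambda$ having $d_T^{-1}$ as a factor is again a radical morphism, a fact I will use repeatedly.

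For the equality $\mathsf{add}\ \Lambda=\mathsf{add}\ (T^0\oplus T^{-1})$, one inclusion is immediate since $T^0$ and $T^{-1}$ are projective. For the reverse inclusion I would first place $T$ inside the thick subcategory generated by the stalk complexes $T^0$ and $T^{-1}$: the brutal truncation yields a termwise-split triangle $T^0\to T\to T^{-1}[1]\to T^0[1]$, so $\mathsf{thick}\ T\subseteq\mathsf{thick}\ (T^0\oplus T^{-1})=\mathsf{thick}\ P_J$, where $P_J:=\bigoplus_{i\in J}P_i$ collects the indecomposable projectives actually occurring in $T^0\oplus T^{-1}$. The crux is then to identify $\mathsf{thick}\ P_J$ with the subcategory $\mathcal{T}_J$ of objects whose minimal projective model has all terms in $\mathsf{add}\ P_J$. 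That $\mathcal{T}_J$ is triangulated and shift-closed follows at once from the description of morphisms and mapping cones of bounded complexes of projectives; closure under direct summands is the delicate point, which I would extract from the uniqueness of minimal complexes (a summand's minimal model is a complex-direct-summand of any minimal model of the ambient object, hence again supported on $\mathsf{add}\ P_J$). Granting this, if some indecomposable projective $P_k$ did not occur in $T^0\oplus T^{-1}$, then its stalk—already minimal—would fail to lie in $\mathcal{T}_J$; but $\mathcal{T}_J=\mathsf{thick}\ P_J\supseteq\mathsf{thick}\ T=\mathcal{K}_{\Lambda}$ contains every object, a contradiction. Hence every indecomposable projective occurs, and the two additive closures coincide.

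For the disjointness $\mathsf{add}\ T^0\cap\mathsf{add}\ T^{-1}=0$, I would compute the obstruction space directly. A chain map $T\to T[1]$ is determined by its degree $-1$ component alone, and one checks that $\mathsf{Hom}_{\mathcal{K}_{\Lambda}}(T,T[1])$ is the cokernel of
\[
\mathsf{Hom}_{\Lambda}(T^{-1},T^{-1})\oplus\mathsf{Hom}_{\Lambda}(T^0,T^0)\longrightarrow\mathsf{Hom}_{\Lambda}(T^{-1},T^0),\qquad (h^{-1},h^0)\longmapsto h^0 d_T^{-1}-d_T^{-1}h^{-1},
\]
so the presilting hypothesis forces this cokernel to vanish. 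Now suppose an indecomposable projective $P$ were a common summand, with section/retraction pairs $(\iota,p)$ for $T^{-1}$ and $(j,q)$ for $T^0$. Then $jp\colon T^{-1}\to T^0$ must be null-homotopic, say $jp=h^0 d_T^{-1}-d_T^{-1}h^{-1}$; precomposing with $\iota$, postcomposing with $q$, and using $qj=1_P=p\iota$ gives $1_P=q h^0 d_T^{-1}\iota-q d_T^{-1}h^{-1}\iota$. By minimality both $d_T^{-1}\iota$ and $q\,d_T^{-1}$ are radical morphisms, so the right-hand side lies in $\mathsf{rad}\ \mathsf{End}_{\Lambda}(P)$; since $\mathsf{End}_{\Lambda}(P)$ is local this contradicts $1_P\notin\mathsf{rad}$. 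Therefore no indecomposable projective appears in both degrees.

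The main obstacle I anticipate is the structural identification in the first part—namely that the thick subcategory generated by a set of indecomposable projectives consists exactly of the objects whose minimal model is supported on those projectives, with closure under direct summands being the one point that genuinely relies on the uniqueness theory of minimal complexes. The remaining steps (the truncation triangle, the cokernel description of $\mathsf{Hom}_{\mathcal{K}_{\Lambda}}(T,T[1])$, and the radical/locality argument) are then routine.
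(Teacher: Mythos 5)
Your proof is correct, and since the paper offers no argument of its own for this proposition---it is quoted directly from \cite[Lemma 2.25, Theorem 2.27]{AI-silting}---the right comparison is with that source, whose standard proofs your two steps faithfully reconstruct: the thick-subcategory/support argument (with the summand-closure of $\mathcal{T}_J$ resting, as you rightly flag, on uniqueness of minimal complexes) for $\mathsf{add}\ \Lambda=\mathsf{add}\ (T^0\oplus T^{-1})$, and the null-homotopy plus radical/locality argument for $\mathsf{add}\ T^0\cap\mathsf{add}\ T^{-1}=0$. Your opening normalization to a minimal representative (radical differential) is exactly the implicit convention needed for the statement to be well-posed, so nothing is missing.
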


Let $\Lambda$ be a bound quiver algebra with $n$ simple modules and $P_1, P_2,\ldots, P_n$ the pairwise non-isomorphic indecomposable projective $\Lambda$-modules. We denote by $[P_1]$, $[P_2]$, $\ldots$, $[P_n]$ the isomorphism classes of indecomposable complexes concentrated in degree 0. Then, the classes $[P_1], [P_2], \ldots, [P_n]$ in $\mathcal{K}_{\Lambda}$ induce a standard basis of the Grothendieck group $K_0(\mathcal{K}_{\Lambda})$. If a two-term complex $T$ in $\mathcal{K}_{\Lambda}$ is written as
\begin{center}
$\displaystyle\left ( \bigoplus_{i=1}^n P_i^{\oplus b_i}\longrightarrow \bigoplus_{i=1}^n P_i^{\oplus a_i} \right )$,
\end{center}
then the class $[T]$ can be identified by an integer vector
\begin{center}
$g(T) = (a_1-b_1, a_2-b_2, \ldots, a_n-b_n)\in \mathbb{Z}^n$,
\end{center}
which is called the $g$-vector of $T$. We always display $g$-vectors of two-term silting complexes as the direct sum of $g$-vectors of indecomposable two-term presilting complexes. The following is a critical statement.
\begin{proposition}[{\cite[Theorem 5.5]{AIR}}]\label{prop-g-vector-injection}
Let $T$ be a two-term silting complex in $\mathsf{2\text{-}silt}\ \Lambda$. Then, the map $T \mapsto g(T)$ is an injection.
\end{proposition}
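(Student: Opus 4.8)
The plan is to prove the equivalent assertion that $g(T)=g(S)$ forces $T\simeq S$ in $\mathcal{K}_{\Lambda}$ for $T,S\in \mathsf{2\text{-}silt}\ \Lambda$, represented by minimal two-term complexes $T=(T^{-1}\xrightarrow{f}T^0)$ and $S=(S^{-1}\xrightarrow{g}S^0)$. I would split this into a routine \emph{shape} step, which pins down the graded pieces, and a \emph{rigidity} step, which matches the differentials; I expect all the real difficulty to sit in the second step.

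First I would read the shape of the complex off its $g$-vector. By Proposition \ref{prop-empty} we have $\mathsf{add}\ \Lambda=\mathsf{add}(T^0\oplus T^{-1})$ and $\mathsf{add}\ T^0\cap\mathsf{add}\ T^{-1}=0$, so each indecomposable projective $P_i$ occurs in exactly one of the two terms. Hence the $i$-th entry of (the summed) $g(T)$ is nonzero, its sign tells us whether $P_i$ sits in degree $0$ or degree $-1$, and its absolute value records the multiplicity. Therefore $g(T)=g(S)$ already forces $T^0\simeq S^0$ and $T^{-1}\simeq S^{-1}$; after fixing such isomorphisms I may assume $T$ and $S$ have literally the same terms $Q:=T^{-1}=S^{-1}$ and $P:=T^0=S^0$, and differ only in their differentials $f,g\in\mathsf{Hom}_{\Lambda}(Q,P)$.

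It then remains to prove the rigidity statement that the silting hypotheses force $(Q\xrightarrow{f}P)$ and $(Q\xrightarrow{g}P)$ to be isomorphic. Here I would argue by antisymmetry of the partial order $\geqslant$ on $\mathsf{silt}\ \Lambda$: a degree count shows $\mathsf{Hom}_{\mathcal{K}_{\Lambda}}(T,S[i])=0$ automatically for every $i\geqslant 2$, so $T\geqslant S$ reduces to the single vanishing $\mathsf{Hom}_{\mathcal{K}_{\Lambda}}(T,S[1])=0$, and symmetrically for $S\geqslant T$. Proving both would give $T\geqslant S\geqslant T$, whence $T\simeq S$. Computing $\mathsf{Hom}_{\mathcal{K}_{\Lambda}}(T,S[1])$ as the cohomology of the Hom-complex, it is identified with the cokernel of
\begin{center}
$\mathsf{End}_{\Lambda}(Q)\oplus\mathsf{End}_{\Lambda}(P)\longrightarrow \mathsf{Hom}_{\Lambda}(Q,P),\qquad (\alpha,\beta)\longmapsto g\alpha-\beta f$,
\end{center}
while the presilting conditions $\mathsf{Hom}_{\mathcal{K}_{\Lambda}}(T,T[1])=0$ and $\mathsf{Hom}_{\mathcal{K}_{\Lambda}}(S,S[1])=0$ say exactly that the two \emph{unmixed} maps $(\alpha,\beta)\mapsto f\alpha-\beta f$ and $(\alpha,\beta)\mapsto g\alpha-\beta g$ are surjective.

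The main obstacle is precisely this \emph{mixed} surjectivity: equal $g$-vectors give matching graded pieces but say nothing formally about how $f$ and $g$ interact, and surjectivity of $(\alpha,\beta)\mapsto g\alpha-\beta f$ does not follow from surjectivity of the two unmixed maps. To overcome it I would not attack the cokernel head-on but instead invoke the index calculus for silting objects: writing each indecomposable summand of $T$ in terms of $S$ (using $\mathcal{K}_{\Lambda}=\mathsf{thick}\ S$), the identity $[T]=[S]$ in $K_0(\mathcal{K}_{\Lambda})$ forces the indices of $T$ and $S$ relative to $S$ to coincide, and the sign-coherence of these indices for a presilting object then forces $\mathsf{add}\ T=\mathsf{add}\ S$. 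Equivalently, one shows via \cite[Theorem 2.35]{AI-silting} that any strict relation $T>S$ in $\mathsf{silt}\ \Lambda$ is witnessed by a left mutation, and that such a mutation changes the class in $K_0(\mathcal{K}_{\Lambda})$ detectably; ruling out strict inequalities in both directions is what finally yields $T\simeq S$. I expect the sign-coherence input to be the genuinely substantial ingredient, with everything preceding it being bookkeeping enabled by Proposition \ref{prop-empty}.
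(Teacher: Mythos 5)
There is nothing to compare against inside the paper: Proposition \ref{prop-g-vector-injection} is quoted verbatim from \cite[Theorem 5.5]{AIR} and no proof is given here, so your attempt has to be measured against the argument in [AIR]. The parts of your outline that are correct are the easy ones. Proposition \ref{prop-empty} does imply that each indecomposable projective occurs in exactly one of $T^0$, $T^{-1}$, so the signs and absolute values of the entries of $g(T)$ recover $T^0$ and $T^{-1}$ up to isomorphism; the reduction of $T\geqslant S$ to the single vanishing $\mathsf{Hom}_{\mathcal{K}_{\Lambda}}(T,S[1])=0$ is a correct degree count; and the identification of that space with the cokernel of $(\alpha,\beta)\mapsto g\alpha-\beta f$ is the right homotopy computation. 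The strategy of concluding by antisymmetry of $\geqslant$ is legitimate in principle.

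The gap is exactly where you place the weight. The assertion that ``$[T]=[S]$ forces the indices to coincide, and sign-coherence then forces $\mathsf{add}\ T=\mathsf{add}\ S$'' is not an argument: equality of the sums $\sum_j[T_j]=\sum_i[S_i]$ in $K_0(\mathcal{K}_{\Lambda})$ does not determine the multiset of classes of the indecomposable summands, and the sign-coherence/linear-independence statements you would want to invoke are, in the literature, deduced \emph{from} Theorem 5.5 of [AIR], so this route is circular as it stands. The fallback via \cite[Theorem 2.35]{AI-silting} is also insufficient: that result only characterizes covering relations, a strict inequality $T>S$ need not be realized by a single mutation or even a finite chain of them, and, more seriously, your antisymmetry scheme requires you to exclude the possibility that $T$ and $S$ are \emph{incomparable}, which ``ruling out strict inequalities in both directions'' does not address. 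For the record, the actual proof in [AIR] does not stay in $\mathcal{K}_{\Lambda}$ at all: it passes to the corresponding $\tau$-rigid pairs $(M,P)$ and $(N,Q)$, proves an Auslander--Reiten-duality formula expressing $\langle g^{(M,P)},[X]\rangle$ in terms of $\dim\mathsf{Hom}_{\Lambda}(M,X)$, $\dim\mathsf{Hom}_{\Lambda}(X,\tau M)$ and $\dim\mathsf{Hom}_{\Lambda}(P,X)$, and then uses $\tau$-rigidity together with the fact that a module is determined by the dimensions of the Hom-spaces out of it to force $(M,P)\simeq(N,Q)$. That AR-duality input is the substantial ingredient your outline is missing, and none of the silting-theoretic facts quoted in this paper substitute for it.
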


\subsection{Connection with $\tau$-tilting theory}
We briefly review the fundamental definitions in $\tau$-tilting theory. Then, the connection between $\tau$-tilting theory and silting theory is explained, which will make the reason clear why we focus on two-term silting complexes.

\begin{definition}[{\cite[Definition 0.1]{AIR}}]
Let $M\in \mathsf{mod}\ \Lambda$ and $|M|$ be the number of isomorphism classes of indecomposable direct summands of $M$.
\begin{enumerate}
  \item $M$ is called $\tau$-tilting if $\mathsf{Hom}_\Lambda(M,\tau M)=0$ and $\left | M \right |=\left | \Lambda \right |$.
  \item $M$ is called support $\tau$-tilting if $M$ is a $\tau$-tilting $\left ( \Lambda/\Lambda e \Lambda\right )$-module for an idempotent $e$ of $\Lambda$. In this case, put $P:=e\Lambda$. Then, $(M,P)$ is called a support $\tau$-tilting pair.
\end{enumerate}
\end{definition}

We denote by $\mathsf{s\tau\text{-}tilt}\ \Lambda$ the set of isomorphism classes of basic support $\tau$-tilting $\Lambda$-modules. Then, an algebra $\Lambda$ is called $\tau$-tilting finite if the set $\mathsf{s\tau\text{-}tilt}\ \Lambda$ is finite. Otherwise, $\Lambda$ is said to be $\tau$-tilting infinite.

\begin{theorem}[{\cite[Theorem 3.2]{AIR}}]
There exists a poset isomorphism between $\mathsf{s\tau\text{-}tilt}\ \Lambda$ and $2\mathsf{\text{-}silt}\ \Lambda$. More precisely, the bijection is given by
\begin{center}
$\xymatrix@C=0.8cm@R=0.2cm{M\ar@{|->}[r]&(P_1\oplus P\overset{(f,0)}{\longrightarrow} P_0)}$,
\end{center}
where $(M,P)$ is the support $\tau$-tilting pair corresponding to $M$ and $P_1\overset{f}{\longrightarrow }P_0\overset{}{\longrightarrow }M\longrightarrow 0$ is the minimal projective presentation of $M$.
\end{theorem}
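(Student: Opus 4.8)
The plan is to prove the statement in two stages: first to exhibit the asserted assignment together with an explicit inverse, establishing a bijection of the underlying sets, and then to check that both directions preserve the partial orders, which upgrades the bijection to a poset isomorphism. For a module $M$ with minimal projective presentation $P_1\xrightarrow{f}P_0\to M\to 0$ and support $\tau$-tilting pair $(M,P)$, I write $P_{(M,P)}:=(P_1\oplus P\xrightarrow{(f,0)}P_0)$ for the associated two-term complex, and for $T=(T^{-1}\xrightarrow{d}T^0)$ I write $H^0(T):=\mathsf{coker}\,d$; the backward map will be $T\mapsto (H^0(T),P)$, with $P$ the summand of $T^{-1}$ not reached by $d$.

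The technical heart is a single homological identity. I would first compute, for modules $M,N$ with minimal projective presentations $P_1\xrightarrow{f}P_0\to M\to 0$ and $Q_1\xrightarrow{g}Q_0\to N\to 0$, the space $\mathsf{Hom}_{\mathcal{K}_{\Lambda}}(P_M,P_N[1])$ directly as homotopy classes of chain maps. Since $P_N[1]$ is concentrated in degrees $-2$ and $-1$, the only nonzero component of a chain map $P_M\to P_N[1]$ is a map $P_1\to Q_0$, and factoring out null-homotopic maps gives
\[
\mathsf{Hom}_{\mathcal{K}_{\Lambda}}(P_M,P_N[1])\;\cong\;\frac{\mathsf{Hom}_{\Lambda}(P_1,N)}{\mathsf{Hom}_{\Lambda}(P_0,N)\circ f}.
\]
Separately, writing $\tau M=D\,\mathsf{Tr}\,M$ with $D$ the $K$-dual, I would use the adjunction $\mathsf{Hom}_{\Lambda}(N,DX)\cong D(X\otimes_{\Lambda}N)$ and the identification $P^{*}\otimes_{\Lambda}N\cong\mathsf{Hom}_{\Lambda}(P,N)$ for projective $P$ to present $\mathsf{Hom}_{\Lambda}(N,\tau M)$ as the $K$-dual of exactly the same quotient. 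This yields $\mathsf{Hom}_{\mathcal{K}_{\Lambda}}(P_M,P_N[1])\cong D\,\mathsf{Hom}_{\Lambda}(N,\tau M)$, so in particular these two spaces vanish simultaneously. I expect this computation to be the main obstacle: one must track the sign of the shifted differential and verify that the homotopy relations collapse precisely onto the factorization subspace $\mathsf{Hom}_{\Lambda}(P_0,N)\circ f$, which relies on the projectivity of $P_0$ and $P_1$ and on the surjectivity of the two presentations.

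With this lemma the bijection follows quickly. Taking $N=M$ shows that $M$ is $\tau$-rigid if and only if $P_M$ is presilting; encoding the projective part $P=e\Lambda$ as a stalk complex in degree $-1$ and recording the extra vanishing $\mathsf{Hom}_{\Lambda}(P,N)=0$ extends this to support $\tau$-rigid pairs. Counting indecomposable summands, $|M|+|P|=|\Lambda|$, matches the fact that a silting complex has exactly $|\Lambda|$ indecomposable summands, so $(M,P)$ is support $\tau$-tilting precisely when $P_{(M,P)}$ is silting. The two maps are mutually inverse: one direction is immediate from $H^0(P_{(M,P)})=M$, and in the other direction, for $T\in\mathsf{2\text{-}silt}\ \Lambda$ Proposition \ref{prop-empty} gives $\mathsf{add}\ (T^0\oplus T^{-1})=\mathsf{add}\ \Lambda$ and $\mathsf{add}\ T^0\cap\mathsf{add}\ T^{-1}=0$, so minimality of $T$ forces it to be the complex built from the minimal presentation of $H^0(T)$ together with the residual projective summands, i.e. $T=P_{(H^0(T),P)}$. (Injectivity can also be read off from Proposition \ref{prop-g-vector-injection}.)

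Finally I would verify order-compatibility. The order on $\mathsf{2\text{-}silt}\ \Lambda$ is $T\geqslant S$ if and only if $\mathsf{Hom}_{\mathcal{K}_{\Lambda}}(T,S[1])=0$, while the order on $\mathsf{s\tau\text{-}tilt}\ \Lambda$ is inclusion of the torsion classes $\mathsf{Fac}\ M$. By the key isomorphism, $P_{(M,P)}\geqslant P_{(N,Q)}$ is equivalent to $\mathsf{Hom}_{\Lambda}(N,\tau M)=0$ together with $\mathsf{Hom}_{\Lambda}(P,N)=0$ coming from the projective summand, and I would match this with $\mathsf{Fac}\ N\subseteq\mathsf{Fac}\ M$ using the characterization $\mathsf{Fac}\ M=\{X:\mathsf{Hom}_{\Lambda}(X,\tau M)=0\}\cap\{X:\mathsf{Hom}_{\Lambda}(P,X)=0\}$ valid for support $\tau$-tilting $M$. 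As both maps are then order-preserving and mutually inverse, the bijection is a poset isomorphism, which completes the proof.
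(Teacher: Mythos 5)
Your proposal is correct and follows essentially the same route as the source: the paper itself gives no proof of this statement, quoting it from \cite[Theorem 3.2]{AIR}, and your plan --- the homotopy-category computation $\mathsf{Hom}_{\mathcal{K}_{\Lambda}}(P_M,P_N[1])\cong D\,\mathsf{Hom}_{\Lambda}(N,\tau M)$ via the transpose and tensor-hom adjunction, the extra vanishing $\mathsf{Hom}_{\Lambda}(P,N)=0$ for the stalk summand $P[1]$, the summand count $|M|+|P|=|\Lambda|$, and the order comparison through $\mathsf{Fac}$ --- is precisely the original Adachi--Iyama--Reiten argument. The one step you take on faith, that a two-term presilting complex with $|\Lambda|$ indecomposable summands is silting, is a genuine ingredient but is likewise supplied in \cite{AIR} by their Bongartz-type completion, so no gap remains.
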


Immediately, we know that checking the $\tau$-tilting finiteness of an algebra $\Lambda$ is identical to checking the finiteness of $\mathsf{2\text{-}silt}\ \Lambda$. This is equivalent to finding either a finite connected component or an infinite left mutation chain in $\mathcal{H}(\mathsf{2\text{-}silt}\ \Lambda)$, due to the following statement.
\begin{proposition}[{\cite[Corollary 2.38]{AIR}}]
If the Hasse quiver $\mathcal{H}(\mathsf{2\text{-}silt}\ \Lambda)$ contains a finite connected component $\mathcal{C}$, then $\mathcal{H}(\mathsf{2\text{-}silt}\ \Lambda)=\mathcal{C}$.
\end{proposition}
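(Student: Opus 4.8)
The plan is to exploit two structural features of the poset $\mathsf{2\text{-}silt}\ \Lambda$: it has a unique maximum and a unique minimum, and its Hasse arrows are precisely the irreducible left mutations. First I would record that $\Lambda=(0\to\Lambda)$ is the maximum and $\Lambda[1]=(\Lambda\to 0)$ is the minimum of $\mathsf{2\text{-}silt}\ \Lambda$. Indeed, for any two-term $T$ one computes $\mathsf{Hom}_{\mathcal{K}_\Lambda}(\Lambda,T[i])\cong H^i(T)=0$ and $\mathsf{Hom}_{\mathcal{K}_\Lambda}(T,\Lambda[1+i])=0$ for all $i>0$, since $T$ is concentrated in degrees $-1,0$ and $\Lambda$ is a projective stalk in degree $0$; hence $\Lambda\geqslant T\geqslant \Lambda[1]$. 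Next, by the cited characterization \cite[Theorem 2.35]{AI-silting}, an arrow $S\to M$ in $\mathcal{H}(\mathsf{2\text{-}silt}\ \Lambda)$ exists exactly when $M$ is an irreducible left mutation of $S$ (equivalently $S$ is a right mutation of $M$, a covering relation $S>M$). In particular a connected component $\mathcal{C}$ is closed under forming left and right mutations of its vertices.

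The engine of the argument is the following refinement lemma: if $T>S$ in $\mathsf{2\text{-}silt}\ \Lambda$, then there is an irreducible left mutation with $T>\mu_j^-(T)\geqslant S$, and dually a right mutation with $T\geqslant \mu_j^+(S)>S$. I expect this to be the main obstacle. The approach is to work with the defining triangle $T_j\to Z\to \mathsf{cone}(\pi)\to T_j[1]$ of a mutation: one selects the indecomposable summand $T_j$ of $T$ so that the vanishing $\mathsf{Hom}_{\mathcal{K}_\Lambda}(T,S[i])=0$ is preserved after the mutation, i.e. $\mu_j^-(T)\geqslant S$ still holds, while $\mu_j^-(T)<T$ by construction. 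Establishing this requires a careful $\mathsf{Hom}$-computation along the mutation triangle together with the covering description above; it is exactly the place where the approximation-theoretic content of silting mutation, rather than formal poset manipulation, is used.

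Granting the refinement lemma, the statement follows in two steps. Since $\mathcal{C}$ is finite it has a maximal vertex $M$; maximality together with closure under mutation means $M$ receives no arrow, so $M$ admits no proper right mutation, and the dual refinement lemma then forces $M$ to be the global maximum, i.e. $M=\Lambda\in\mathcal{C}$. For the second step, take an arbitrary $S\in\mathsf{2\text{-}silt}\ \Lambda$; since $\Lambda\geqslant S$, the refinement lemma produces a strictly decreasing chain $\Lambda>T_1>T_2>\cdots$ of irreducible left mutations with every $T_k\geqslant S$, all lying in $\mathcal{C}$ by closure. As $\mathcal{C}$ is finite this chain cannot descend forever, so it must stabilize, which can only happen when some $T_k=S$; hence $S\in\mathcal{C}$. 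As $S$ was arbitrary, $\mathcal{C}=\mathcal{H}(\mathsf{2\text{-}silt}\ \Lambda)$.
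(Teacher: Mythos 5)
The paper offers no proof of this proposition at all: it is quoted directly from \cite{AIR}, so the benchmark is the original argument there, and your proposal reconstructs essentially that argument --- $\Lambda$ and $\Lambda[1]$ as maximum and minimum of $\mathsf{2\text{-}silt}\ \Lambda$ (your degree computation for both inequalities is correct), Hasse arrows as irreducible mutations via \cite[Theorem 2.35]{AI-silting}, a mutation-refinement lemma, and the two finiteness steps. Granting the refinement lemma, your deductions are sound; the one point you should make explicit is that the mutations it produces remain two-term: if $T>\mu_j^-(T)\geqslant S$ with $T,S\in\mathsf{2\text{-}silt}\ \Lambda$, then $\Lambda\geqslant\mu_j^-(T)\geqslant\Lambda[1]$ by transitivity, and a silting complex sandwiched between $\Lambda$ and $\Lambda[1]$ is two-term, so the covering relation really is an arrow of $\mathcal{H}(\mathsf{2\text{-}silt}\ \Lambda)$ and your closure-under-mutation and termination arguments go through verbatim.

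The genuine gap is the refinement lemma itself, which you state, flag as the main obstacle, and then assume. Your sketched mechanism --- choose $T_j$ ``so that the vanishing $\mathsf{Hom}_{\mathcal{K}_\Lambda}(T,S[i])=0$ is preserved'' --- is not yet an executable idea. The precise condition is visible from the mutation triangle $T_j\overset{\pi}{\longrightarrow}Z\longrightarrow C\longrightarrow T_j[1]$: applying $\mathsf{Hom}_{\mathcal{K}_\Lambda}(-,S[1])$ and using $\mathsf{Hom}_{\mathcal{K}_\Lambda}(Z,S[1])=0$ (as $Z\in\mathsf{add}(T/T_j)$ and $T\geqslant S$) yields $\mathsf{Hom}_{\mathcal{K}_\Lambda}(C,S[1])\simeq \mathrm{coker}\bigl(\pi^{*}\colon \mathsf{Hom}_{\mathcal{K}_\Lambda}(Z,S)\to\mathsf{Hom}_{\mathcal{K}_\Lambda}(T_j,S)\bigr)$, so $\mu_j^-(T)\geqslant S$ if and only if every map $T_j\to S$ factors through the approximation $\pi$. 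The naive way to force this, picking $T_j$ with $\mathsf{Hom}_{\mathcal{K}_\Lambda}(T_j,S)=0$, can be impossible: already for the path algebra of $1\to 2$, with $T=\Lambda$ and $S=(0\to P_1)\oplus(P_2\to P_1)$, both indecomposable summands of $T$ admit nonzero maps to $S$, and yet $\mu_2^-(T)=S$ because the relevant maps factor through the approximation. Showing that a summand with the factorization property always exists whenever $T>S$ is the real content of the lemma, and it is exactly what \cite{AIR} establishes (Corollary 2.34 there, proved via the dictionary with functorially finite torsion classes and Bongartz-type completion) on the way to the quoted Corollary 2.38; it does not follow formally from the covering characterization you cite. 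Since that lemma sits in the same source as the proposition itself, invoking it closes your proof; attempting to extract it from poset manipulation alone, as the sketch suggests, would not succeed.
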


In order to find a finite connected component of $\mathcal{H}(\mathsf{2\text{-}silt}\ \Lambda)$, we usually calculate the left mutations starting with $\Lambda$ because $\Lambda$ is the maximal element in the poset $\mathsf{2\text{-}silt}\ \Lambda$. Although such a left mutation is always silting, it is not always two-term. Hence, it is necessary to make clear when such a left mutation is out of $\mathsf{2\text{-}silt}\ \Lambda$. We have the following essential statement. Here, we denote by $|T|$ the number of isomorphism classes of indecomposable direct summands of $T$.
\begin{proposition}[{\cite[Corollary 3.8]{AIR}}]\label{prop-exactly-two}
Let $T$ be a two-term presilting complex in $\mathcal{K}_{\Lambda}$ with $|T|=|\Lambda|-1$. Then, $T$ is a direct summand of exactly two basic two-term silting complexes in $\mathsf{2\text{-}silt}\ \Lambda$.
\end{proposition}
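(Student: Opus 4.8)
The plan is to pass to $g$-vectors and reduce the count to the rank-one case. Write $\Lambda$ with $n=|\Lambda|$ simple modules and $T=T_1\oplus\cdots\oplus T_{n-1}$ with each $T_j$ indecomposable two-term presilting. First I would record that the $g$-vectors $g(T_1),\ldots,g(T_{n-1})$ are linearly independent: any two-term silting complex containing $T$ has its $n$ indecomposable $g$-vectors forming a $\mathbb{Z}$-basis of $K_0(\mathcal{K}_\Lambda)\cong\mathbb{Z}^n$ by Proposition \ref{prop-g-vector-injection} together with the standard basis property of silting $g$-vectors, so $g(T_1),\ldots,g(T_{n-1})$ span a rank-$(n-1)$ sublattice, hence a rational hyperplane $H\subset\mathbb{R}^n$. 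The goal then becomes: there is exactly one indecomposable two-term presilting $X$ with $T\oplus X$ silting on each open side of $H$.

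For existence (so that $H$ is actually a wall), I would invoke the two-term Bongartz completion, which extends the presilting $T$ to a two-term silting complex $M=T\oplus X$ with $X$ indecomposable; this uses only $\mathsf{Hom}_{\mathcal{K}_\Lambda}(T,T[1])=0$ together with Proposition \ref{prop-empty}. To produce a second completion I would apply the irreducible left silting mutation $\mu_X^-(M)$ at the summand $X$, keeping the complement $T$ fixed. From the defining triangle $X\to Z\to \mathsf{cone}(\pi)\to X[1]$ with $Z\in\mathsf{add}(T)$, the new summand $X'=\mathsf{cone}(\pi)$ satisfies $[X']=[Z]-[X]$ in $K_0(\mathcal{K}_\Lambda)$, so $g(X')\equiv -g(X)\pmod H$; hence $g(X')$ lies on the opposite side of $H$ from $g(X)$, and $T\oplus X'\neq M$ is a genuinely different completion. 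The nontrivial point here is that $\mu_X^-(M)$ remains two-term, for which I would use the characterization $\mathsf{2\text{-}silt}\ \Lambda=\{S\in\mathsf{silt}\ \Lambda:\Lambda\geqslant S\geqslant\Lambda[1]\}$ from \cite{AI-silting} and argue that the mutation at $X$ cannot leave this interval while the complement $T$ is fixed.

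For uniqueness — at most one completion on each side of $H$, hence at most two in total — the cleanest route is silting/$\tau$-tilting reduction: the presilting complex $T$ determines a reduced category whose two-term silting objects correspond bijectively and order-preservingly to the two-term silting complexes of $\Lambda$ admitting $T$ as a direct summand (see \cite{EJR-reduction}, and \cite{AIR} for the $\tau$-tilting formulation via Proposition \ref{prop-empty} and the bijection between support $\tau$-tilting modules and two-term silting complexes). Since $T$ has $n-1$ summands, the reduced algebra has a single simple module, and an algebra with one simple admits exactly the two two-term silting complexes $(0\to C)$ and $(C\to 0)$; transporting back yields exactly two completions, which must then be the $M$ and $T\oplus X'$ found above. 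The main obstacle is precisely this reduction step: one must verify that the reduction is available for an \emph{almost complete} presilting complex and that its silting objects biject with the completions of $T$ in a manner compatible with the two-term restriction. Alternatively, one can bypass reduction and prove directly that two completions lying on the same open side of $H$ must have equal $g$-vectors, and hence coincide by Proposition \ref{prop-g-vector-injection}; establishing this via sign-coherence of $g$-vectors is where the real work lies.
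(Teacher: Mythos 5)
The paper offers no proof of this proposition: it is imported verbatim as \cite[Corollary 3.8]{AIR}, so there is no internal argument to compare yours against. On its own terms, your outline traces the standard route from the literature (Bongartz completion plus mutation for existence of two completions; reduction to a local algebra, or a $g$-vector cone argument, for the count), and the geometric picture of the hyperplane $H$ is correct, including the computation $[X']=[Z]-[X]$ placing the mutated summand on the opposite side.

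The problem is that the two load-bearing steps are exactly the ones you defer. First, the assertion that $\mu_X^-(M)$ stays two-term does not follow from the soft argument you indicate: applying $\mathsf{Hom}_{\mathcal{K}_\Lambda}(-,\Lambda[2])$ to the triangle $X\to Z\to X'\to X[1]$ identifies $\mathsf{Hom}_{\mathcal{K}_\Lambda}(X',\Lambda[2])$ with the cokernel of $\pi^*:\mathsf{Hom}_{\mathcal{K}_\Lambda}(Z,\Lambda[1])\to\mathsf{Hom}_{\mathcal{K}_\Lambda}(X,\Lambda[1])$, and showing this vanishes (i.e., that every map $X\to\Lambda[1]$ factors through the approximation $\pi$) is precisely the content of the cited result in \cite{AIR}; note the paper itself stresses that a left mutation of a two-term silting complex is \emph{not} always two-term, so ``cannot leave the interval $\Lambda\geqslant S\geqslant\Lambda[1]$ while $T$ is fixed'' is the theorem, not an observation. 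Second, the reduction you invoke for uniqueness is misattributed: \cite{EJR-reduction} concerns quotients by central elements and provides no compression along a presilting summand; what you want is Jasso's $\tau$-tilting reduction (or Iyama--Yang silting reduction), which is not among this paper's references and whose compatibility with the two-term restriction for an almost complete presilting complex is itself a nontrivial theorem. Finally, your fallback --- that two completions on the same open side of $H$ have equal $g$-vectors --- is not forced by sign considerations: if $g(X_2)=g(X_1)+h$ with $h$ in the sublattice spanned by $g(T_1),\dots,g(T_{n-1})$, then $g(X_1)$ and $g(X_2)$ lie on the same side of $H$ for every such $h$, so one would need disjointness of interiors of $g$-vector cones, again a substantial input. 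In short: a correct roadmap with every nontrivial step outsourced or left open; given that the paper simply cites \cite{AIR}, the honest course is to do the same rather than present this sketch as a proof.
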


It is worth mentioning that two-point algebras have a nice behavior towards the component of $\mathcal{H}(\mathsf{2\text{-}silt}\ \Lambda)$. We assume that $\Lambda$ is a two-point algebra and $T:=T_1\oplus T_2\in \mathsf{2\text{-}silt}\ \Lambda$. By Proposition \ref{prop-exactly-two}, both $\mu_1^-(\mu_1^-(T))$ and $\mu_2^-(\mu_2^-(T))$ are out of $\mathsf{2\text{-}silt}\ \Lambda$. Hence, there are only two possible orders for taking left mutations starting with $\Lambda$, i.e., either $1,2,1,2,1,2, \cdots$ or $2,1,2,1,2,1,\cdots$. In other words, the Hasse quiver $\mathcal{H}(\mathsf{2\text{-}silt}\ \Lambda)$ for a two-point algebra $\Lambda$ must be of the form
\begin{equation}\label{the-form-of-Hasse-quiver}
\vcenter{\xymatrix@C=0.7cm@R=0.2cm{&\mu_1^-(\Lambda) \ar[r]&\mu_2^-(\mu_1^-(\Lambda))\ar[r]&\mu_1^-(\mu_2^-(\mu_1^-(\Lambda)))\ar[r] &\cdots \ar[r]&\circ \ar[dr]&\\
\Lambda \ar[ur]\ar[dr]&&&&&&\Lambda[1].\\
&\mu_2^-(\Lambda)\ar[r]&\mu_1^-(\mu_2^-(\Lambda))\ar[r]& \mu_2^-(\mu_1^-(\mu_2^-(\Lambda)))\ar[r]&\cdots \ar[r]&\circ \ar[ur]&}}
\end{equation}
In this context, we call $1,2,1,2,1,2, \cdots$ and $2,1,2,1,2,1,\cdots$ the mutation orders of $\Lambda$.

\begin{remark}\label{silt-is-tilt}
Let $\Lambda$ be a two-point algebra and $P_1, P_2$ its indecomposable projective modules. Based on Proposition \ref{prop-empty}, a two-term silting complex $T$ appearing in mutation order $1,2,1,2,1,2, \cdots$ must be of the form $(P_1^{\oplus k}\rightarrow P_2^{\oplus\ell})$ for some integers $k,\ell$. If moreover, $\mathsf{Hom}_{\Lambda}(P_2,P_1)=0$, then $T$ is a tilting complex. We omit the similar argument for the two-term silting complexes appearing in mutation order $2,1,2,1,2,1,\cdots$.
\end{remark}

\subsection{Reduction on $\tau$-tilting finiteness}
We need several methods to check the $\tau$-tilting finiteness of algebras. First of all, we can distinguish the minimal case among all $\tau$-tilting infinite algebras.
\begin{definition}[{\cite{W-two-point}}]
An algebra $\Lambda$ is called minimal $\tau$-tilting infinite if $\Lambda$ is $\tau$-tilting infinite, but any proper quotient algebra of $\Lambda$ is $\tau$-tilting finite.
\end{definition}

It is known (for example, \cite[Theorem 2.6]{Ada-rad-square-0}) that any path algebra $KQ$ with a finite quiver $Q$ whose underlying graph is one of the Euclidean diagrams of type $\widetilde{\mathbb{A}}_n$, $\widetilde{\mathbb{D}}_n (n\geqslant 4)$, $\widetilde{\mathbb{E}}_6$, $\widetilde{\mathbb{E}}_7$, $\widetilde{\mathbb{E}}_8$, is minimal $\tau$-tilting infinite. Mousavand also introduced this notion independently in his recent work \cite{Mousavand-biserial-alg}, where the aim is to give a complete classification of minimal representation-infinite algebras in terms of $\tau$-tilting finiteness.

Secondly, we recall the following reduction statements to emphasize the importance of minimal $\tau$-tilting infinite algebras.
\begin{lemma}[{\cite[Theorem 4.2]{DIJ-tau-tilting-finite}, \cite[Theorem 5.12]{DIRRT}}]\label{lem-quotient-and-idempotent}
If $\Lambda$ is $\tau$-tilting finite, then
\begin{enumerate}
  \item the quotient algebra $\Lambda/I$ is $\tau$-tilting finite for any two-sided ideal $I$ of $\Lambda$,
  \item the idempotent truncation $e\Lambda e$ is $\tau$-tilting finite for any idempotent $e$ of $\Lambda$.
\end{enumerate}
\end{lemma}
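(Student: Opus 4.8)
The plan is to reduce both statements to a cardinality comparison between lattices of torsion classes, using the theorem of Demonet--Iyama--Jasso that $\Lambda$ is $\tau$-tilting finite if and only if the set $\mathsf{tors}\ \Lambda$ of torsion classes in $\mathsf{mod}\ \Lambda$ is finite, equivalently if and only if the set $\mathsf{brick}\ \Lambda$ of bricks is finite. Granting this criterion (which I would cite from \cite{DIJ-tau-tilting-finite}, \cite{DIRRT} rather than reprove), it suffices in each case to exhibit a map relating the bricks, or torsion classes, of the smaller algebra to those of $\Lambda$ that forces finiteness to descend.

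For (1), I would argue via bricks. View $\mathsf{mod}\ (\Lambda/I)$ as the full subcategory of $\mathsf{mod}\ \Lambda$ consisting of the modules annihilated by $I$. If $S$ is such a module, then every $\Lambda$-linear endomorphism of $S$ is automatically $\Lambda/I$-linear and conversely, so $\mathsf{End}_{\Lambda/I}(S)=\mathsf{End}_{\Lambda}(S)$; hence $S$ is a brick over $\Lambda/I$ precisely when it is a brick over $\Lambda$. This yields an inclusion $\mathsf{brick}\ (\Lambda/I)\subseteq \mathsf{brick}\ \Lambda$ of sets of isomorphism classes, so if the right-hand side is finite then so is the left-hand side, and $\Lambda/I$ is $\tau$-tilting finite. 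A torsion-class variant also works: the assignment $\mathcal{T}\mapsto \mathcal{T}\cap \mathsf{mod}\ (\Lambda/I)$ defines a surjection $\mathsf{tors}\ \Lambda\twoheadrightarrow \mathsf{tors}\ (\Lambda/I)$, whose surjectivity one checks by lifting a torsion class $\mathcal{U}$ of $\Lambda/I$ to the smallest torsion class of $\mathsf{mod}\ \Lambda$ containing it.

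For (2), I would use the exact Schur functor $e(-)=\mathsf{Hom}_{\Lambda}(e\Lambda,-)\colon \mathsf{mod}\ \Lambda\to \mathsf{mod}\ (e\Lambda e)$, $M\mapsto Me$, which is exact because $e\Lambda$ is a projective right $\Lambda$-module and is dense because $N\otimes_{e\Lambda e}e\Lambda$ maps to $N$ for every $e\Lambda e$-module $N$. Given a torsion class $\mathcal{U}$ in $\mathsf{mod}\ (e\Lambda e)$, set $\Psi(\mathcal{U}):=\{M\in\mathsf{mod}\ \Lambda: eM\in \mathcal{U}\}$. Since $e(-)$ is exact, $\Psi(\mathcal{U})$ is closed under quotients and extensions, hence is a torsion class in $\mathsf{mod}\ \Lambda$; since $e(-)$ is dense, $\Psi$ is injective. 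Thus $\Psi\colon \mathsf{tors}\ (e\Lambda e)\hookrightarrow \mathsf{tors}\ \Lambda$ gives $\lvert\mathsf{tors}\ (e\Lambda e)\rvert\le \lvert\mathsf{tors}\ \Lambda\rvert$, and finiteness of the latter forces finiteness of the former, so $e\Lambda e$ is $\tau$-tilting finite.

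The genuinely nontrivial input is the characterization of $\tau$-tilting finiteness through finiteness of $\mathsf{tors}\ \Lambda$ (or $\mathsf{brick}\ \Lambda$), which I treat as a black box. The main obstacle in the elementary part is the bookkeeping with the subcategory $\mathsf{mod}\ (\Lambda/I)$, which is \emph{not} closed under extensions inside $\mathsf{mod}\ \Lambda$: in the torsion-class version of (1) one must verify surjectivity by a filtration argument, checking that a $\Lambda/I$-module lying in the torsion class generated by $\mathcal{U}$ already lies in $\mathcal{U}$, using that all subquotients of its filtration remain $\Lambda/I$-modules. For (2) the only subtlety is confirming exactness and density of $e(-)$, after which the verification that $\Psi(\mathcal{U})$ is a torsion class and that $\Psi$ is injective is routine.
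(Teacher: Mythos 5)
Your argument is correct, but note that the paper does not actually prove this lemma: it is quoted verbatim from the literature, with part (1) attributed to \cite{DIJ-tau-tilting-finite} and part (2) to \cite{DIRRT}, so there is no in-paper proof to match against. What you have written is a sound reconstruction of how those references establish the two statements, modulo the black box you correctly identify (the Demonet--Iyama--Jasso equivalences: $\tau$-tilting finite $\Leftrightarrow$ $\mathsf{tors}\ \Lambda$ finite $\Leftrightarrow$ $\mathsf{brick}\ \Lambda$ finite). Your brick argument for (1) is exactly the standard one, and you rightly flag the one real subtlety in the torsion-class variant, namely that $\mathsf{mod}\ (\Lambda/I)$ is closed under submodules and quotients but \emph{not} under extensions in $\mathsf{mod}\ \Lambda$; your filtration argument resolves it, since all subquotients of a filtration of an $I$-annihilated module are again $I$-annihilated. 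For (2), the only divergence from the cited source is the direction of the map: \cite{DIRRT} constructs a surjective lattice morphism $\mathsf{tors}\ \Lambda \twoheadrightarrow \mathsf{tors}\ (e\Lambda e)$ induced by the Schur functor $M\mapsto Me$, whereas you build the injection $\Psi\colon \mathsf{tors}\ (e\Lambda e)\hookrightarrow \mathsf{tors}\ \Lambda$, $\mathcal{U}\mapsto \{M : eM\in\mathcal{U}\}$, in the opposite direction; these are two sides of the same adjunction-type relationship, and for the purpose of the cardinality bound your injection is if anything the more economical choice, since injectivity follows in one line from density of $e(-)$ (using $e(N\otimes_{e\Lambda e}e\Lambda)\simeq N$), while surjectivity of the DIRRT map requires slightly more care. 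Both routes deliver $\lvert\mathsf{tors}\ (e\Lambda e)\rvert\le\lvert\mathsf{tors}\ \Lambda\rvert$, which is all the lemma needs.
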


If we take a special two-sided ideal $I$ of $\Lambda$ in Lemma \ref{lem-quotient-and-idempotent} (1), the $\tau$-tilting finiteness of $\Lambda$ is reduced to that of $\Lambda/I$. This effective technical method is provided by Eisele, Janssens and Raedschelders \cite{EJR-reduction}.
\begin{lemma}[{\cite[Theorem 1]{EJR-reduction}}]\label{lem-center}
Let $I$ be a two-sided ideal generated by central elements included in $\mathsf{rad}\ \Lambda$. Then, the $g$-vectors of two-term silting complexes over $\Lambda$ coincide with the ones over $\Lambda/I$, as do the left/right silting mutations.
\end{lemma}

According to the $\Lambda$-duality $(-)^\ast:=\mathsf{Hom}_\Lambda(-,\Lambda)$, we have
\begin{lemma}[{\cite[Theorem 2.14]{AIR}}]\label{lem-oppsite-algebra}
There exists a poset anti-isomorphism between $\mathsf{2\text{-}silt}\ \Lambda$ and $\mathsf{2\text{-}silt}\ \Lambda^{\emph{op}}$.
\end{lemma}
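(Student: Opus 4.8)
The plan is to produce the anti-isomorphism directly from the $\Lambda$-duality $(-)^\ast=\mathsf{Hom}_\Lambda(-,\Lambda)$ mentioned just above the statement. First I would recall the standard facts about this functor: it restricts to a duality between $\mathsf{proj}\ \Lambda$ and $\mathsf{proj}\ \Lambda^{\mathrm{op}}$, carrying the indecomposable projective $P_i$ to the corresponding indecomposable projective right $\Lambda^{\mathrm{op}}$-module; applied termwise it extends to a contravariant triangle equivalence $(-)^\ast\colon \mathcal{K}_\Lambda \to \mathcal{K}_{\Lambda^{\mathrm{op}}}$ satisfying $(X[1])^\ast\simeq X^\ast[-1]$, and the double dual $(-)^{\ast\ast}$ is naturally isomorphic to the identity on $\mathcal{K}_\Lambda$. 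These are routine consequences of the exactness of $(-)^\ast$ on projectives and its compatibility with mapping cones, and can be cited rather than rederived.

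Next I would define the candidate map $\Phi\colon \mathsf{2\text{-}silt}\ \Lambda \to \mathsf{2\text{-}silt}\ \Lambda^{\mathrm{op}}$ by $\Phi(T):=T^\ast[1]$ and verify it is well defined. Given a two-term complex $T=(T^{-1}\to T^0)$ in degrees $-1,0$, contravariance places $T^\ast$ in degrees $0,1$, with $(T^0)^\ast$ in degree $0$ and $(T^{-1})^\ast$ in degree $1$; the shift $[1]$ then returns $(T^0)^\ast$ to degree $-1$ and $(T^{-1})^\ast$ to degree $0$, so $\Phi(T)$ is again two-term. That $\Phi(T)$ is silting follows from the equivalence: for every $i$ one has $\mathsf{Hom}_{\mathcal{K}_\Lambda}(T,T[i])\simeq \mathsf{Hom}_{\mathcal{K}_{\Lambda^{\mathrm{op}}}}(T^\ast,T^\ast[i])$, so $T^\ast$, and hence its shift $\Phi(T)$, is presilting, while $\mathsf{thick}\ \Phi(T)=\mathcal{K}_{\Lambda^{\mathrm{op}}}$ because the equivalence sends $\mathsf{thick}\ T=\mathcal{K}_\Lambda$ to $\mathsf{thick}\ T^\ast$. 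The same recipe over $\Lambda^{\mathrm{op}}$ gives a map $\Psi(U):=U^\ast[1]$, and the double-dual identification yields $\Psi(\Phi(T))=(T^\ast[1])^\ast[1]\simeq (T^\ast)^\ast[-1][1]\simeq T$; thus $\Phi$ is a bijection with inverse $\Psi$.

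Finally I would check that $\Phi$ reverses the order. For $T,S\in \mathsf{2\text{-}silt}\ \Lambda$ and $i>0$, the cancellation of shifts together with the duality give
\[
\mathsf{Hom}_{\mathcal{K}_{\Lambda^{\mathrm{op}}}}(\Phi(S),\Phi(T)[i])=\mathsf{Hom}_{\mathcal{K}_{\Lambda^{\mathrm{op}}}}(S^\ast[1],T^\ast[i+1])\simeq \mathsf{Hom}_{\mathcal{K}_{\Lambda^{\mathrm{op}}}}(S^\ast,T^\ast[i])\simeq \mathsf{Hom}_{\mathcal{K}_\Lambda}(T,S[i]).
\]
Hence $\mathsf{Hom}_{\mathcal{K}_\Lambda}(T,S[i])=0$ for all $i>0$ if and only if $\mathsf{Hom}_{\mathcal{K}_{\Lambda^{\mathrm{op}}}}(\Phi(S),\Phi(T)[i])=0$ for all $i>0$, that is, $T\geqslant S$ if and only if $\Phi(S)\geqslant \Phi(T)$. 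Combined with the bijectivity from the previous step, this is exactly the asserted poset anti-isomorphism.

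I expect the main obstacle to be the careful bookkeeping of the shift against the contravariance: it is precisely the interaction $(X[1])^\ast\simeq X^\ast[-1]$ together with the corrective $[1]$ in the definition of $\Phi$ that both keeps us inside two-term complexes and converts ``order-preserving'' into ``order-reversing''. A direction or sign slip here would spuriously produce a covariant, hence order-preserving, map, so I would fix the homological shift convention $(X[1])^n=X^{n+1}$ explicitly at the outset and track degrees on each term.
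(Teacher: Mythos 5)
Your proposal is correct and takes essentially the same approach the paper intends: the paper offers no independent argument, citing \cite[Theorem 2.14]{AIR} immediately after the remark that the statement follows from the $\Lambda$-duality $(-)^\ast=\mathsf{Hom}_\Lambda(-,\Lambda)$, and your map $\Phi(T)=T^\ast[1]$, with the degree bookkeeping and the order-reversal computation you carry out, is precisely that duality argument. The same composite (shift after dual) even reappears in the paper as $S_\sigma=[1]\circ \sigma\circ(-)^\ast$ in Lemma \ref{lem-symmetry}, confirming your shift convention.
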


Lastly, we mention another technical method investigated lately by Aihara and the author \cite{Aihara-Wang}. Let $\{e_1,e_2, \cdots, e_n\}$ be a complete list of pairwise orthogonal primitive idempotents of $\Lambda$. If there exists an algebra isomorphism $\sigma: \Lambda^{\text{op}}\rightarrow \Lambda$, then $\sigma$ induces a permutation on $\{1,2, \cdots, n\}$ by $\sigma(e_i^\ast)=e_j$. We set $S_\sigma:=[1]\circ \sigma \circ (-)^\ast$.
\begin{lemma}[{\cite[Theorem 1.2, Corollary 1.5]{Aihara-Wang}}]\label{lem-symmetry}
The functor $S_\sigma$ induces an anti-automorphism of the poset $\mathsf{2\text{-}silt}\ \Lambda$. For any indecomposable two-term presilting complex $T$ with $g(T)=(c_1,c_2, \cdots, c_n)$, we have $g(S_\sigma(T))=-(c_{\sigma(1)}, c_{\sigma(2)}, \cdots, c_{\sigma(n)})$.
\end{lemma}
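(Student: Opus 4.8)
The plan is to factor the functor $S_\sigma=[1]\circ\sigma\circ(-)^\ast$ into its three building blocks, analyze each separately, and reassemble the conclusions at the end. Since the shift $[1]$ commutes with the relabelling functor induced by $\sigma$, I would read $S_\sigma$ as $\sigma\circ\big([1]\circ(-)^\ast\big)$, where the inner part is a contravariant triangle functor $\mathcal{K}_\Lambda\to\mathcal{K}_{\Lambda^{\text{op}}}$ and the outer part is a covariant one $\mathcal{K}_{\Lambda^{\text{op}}}\to\mathcal{K}_\Lambda$. The two assertions of the statement then split cleanly: the poset anti-automorphism is a formal consequence of the functoriality, while the $g$-vector formula is a computation in the Grothendieck group.

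First I would treat the $\Lambda$-duality. The functor $(-)^\ast=\mathsf{Hom}_\Lambda(-,\Lambda)$ restricts to a duality $\mathsf{proj}\ \Lambda\to\mathsf{proj}\ \Lambda^{\text{op}}$ sending the indecomposable projective $P_i=e_i\Lambda$ to $\Lambda e_i\cong e_i^\ast\Lambda^{\text{op}}=:P_i^{\text{op}}$, and hence extends termwise to a contravariant triangle equivalence $\mathcal{K}_\Lambda\to\mathcal{K}_{\Lambda^{\text{op}}}$. Being contravariant, it reverses homological degree, so it carries a two-term complex $(T^{-1}\to T^0)$ in degrees $-1,0$ to the complex $\big((T^0)^\ast\to(T^{-1})^\ast\big)$ in degrees $0,1$; the role of $[1]$ is precisely to return this to degrees $-1,0$, so that $[1]\circ(-)^\ast$ preserves two-termness. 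I would identify $[1]\circ(-)^\ast$ with a representative of the poset anti-isomorphism $\mathsf{2\text{-}silt}\ \Lambda\to\mathsf{2\text{-}silt}\ \Lambda^{\text{op}}$ of Lemma \ref{lem-oppsite-algebra}.

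Next I would handle $\sigma$. An algebra isomorphism $\sigma\colon\Lambda^{\text{op}}\to\Lambda$ induces, by transport of structure, a covariant triangle equivalence $\mathcal{K}_{\Lambda^{\text{op}}}\to\mathcal{K}_\Lambda$; it preserves homological degree and carries $P_i^{\text{op}}$ to $P_{\sigma(i)}$, where $\sigma(e_i^\ast)=e_{\sigma(i)}$. As an equivalence it preserves indecomposability, the presilting condition (so each indecomposable two-term presilting complex is sent to another such complex, making the $g$-vector statement meaningful) and the partial order, hence restricts to a poset isomorphism $\mathsf{2\text{-}silt}\ \Lambda^{\text{op}}\to\mathsf{2\text{-}silt}\ \Lambda$. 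Composing an anti-isomorphism with an isomorphism gives that $S_\sigma$ is a bijective, order-reversing self-map of $\mathsf{2\text{-}silt}\ \Lambda$, i.e.\ a poset anti-automorphism. Concretely, because $S_\sigma$ is a contravariant triangle equivalence one has $S_\sigma(T[-i])\cong(S_\sigma T)[i]$, whence $\mathsf{Hom}_{\mathcal{K}_\Lambda}\big(S_\sigma S,(S_\sigma T)[i]\big)\cong\mathsf{Hom}_{\mathcal{K}_\Lambda}(T,S[i])$ for all $i$; vanishing for all $i>0$ turns $T\geqslant S$ into $S_\sigma S\geqslant S_\sigma T$, which is the order reversal.

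Finally, for the $g$-vector formula I would compute in the Grothendieck group $K_0(\mathcal{K}_\Lambda)\cong\mathbb{Z}^n$ with basis $[P_1],\dots,[P_n]$. Writing $T=\big(\bigoplus_i P_i^{\oplus b_i}\to\bigoplus_i P_i^{\oplus a_i}\big)$ with $g(T)=(c_1,\dots,c_n)$ and $c_i=a_i-b_i$, the dual $T^\ast$ has class $\sum_i c_i[P_i^{\text{op}}]$, then $\sigma$ turns this into $\sum_i c_i[P_{\sigma(i)}]$, and finally $[1]$ multiplies the class by $-1$ since $[X[1]]=-[X]$. Reading off the coefficient of each $[P_j]$ and restoring the sign yields $g(S_\sigma T)=-(c_{\sigma(1)},\dots,c_{\sigma(n)})$; as $S_\sigma T$ is itself an indecomposable two-term presilting complex, this class is literally its $g$-vector. (In the two-point case of interest the induced permutation is an involution, so the distinction between $\sigma$ and $\sigma^{-1}$ that one must otherwise track through the indexing convention $\sigma(e_i^\ast)=e_{\sigma(i)}$ does not arise.) The main obstacle is exactly this homological bookkeeping: verifying that $(-)^\ast$ is a genuine contravariant triangle equivalence on the bounded homotopy categories of projectives and pinning down precisely how degrees, the sign contributed by $[1]$, and the permutation contributed by $\sigma$ propagate through the composite; once these are in hand, both the poset anti-automorphism and the $g$-vector transformation follow formally.
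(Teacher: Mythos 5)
Your proposal is sound, but there is nothing in the paper to compare it against: the paper states this lemma as an imported result, citing \cite[Theorem 1.2, Corollary 1.5]{Aihara-Wang}, and gives no proof of its own. Your reconstruction follows exactly the route the cited source takes and that the paper's surrounding text suggests: factor $S_\sigma$ into the $\Lambda$-duality $(-)^\ast$ (a contravariant triangle equivalence $\mathcal{K}_\Lambda\to\mathcal{K}_{\Lambda^{\text{op}}}$ that negates homological degree, so $[1]\circ(-)^\ast$ preserves two-termness and realizes the anti-isomorphism of Lemma \ref{lem-oppsite-algebra}) and the transport equivalence along $\sigma$ sending $P_i^{\text{op}}$ to $P_{\sigma(i)}$; order reversal then drops out of your identity $\mathsf{Hom}_{\mathcal{K}_\Lambda}\bigl(S_\sigma S,(S_\sigma T)[i]\bigr)\cong\mathsf{Hom}_{\mathcal{K}_\Lambda}(T,S[i])$, and the $g$-vector formula out of $[T^\ast]=\sum_i c_i[P_i^{\text{op}}]$ together with $[X[1]]=-[X]$. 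All of these steps check out, including the degree bookkeeping (dualizing a complex in degrees $-1,0$ lands in degrees $0,1$, and $[1]$ restores it) and the verification that $S_\sigma$ preserves indecomposability and the presilting condition.

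The one point to tighten is the caveat you yourself flag: your computation, read coordinatewise, gives $[S_\sigma T]=-\sum_j c_{\sigma^{-1}(j)}[P_j]$, i.e. $g(S_\sigma T)=-(c_{\sigma^{-1}(1)},\dots,c_{\sigma^{-1}(n)})$, which agrees with the displayed formula $-(c_{\sigma(1)},\dots,c_{\sigma(n)})$ only when the induced permutation is an involution or after reconciling the indexing convention $\sigma(e_i^\ast)=e_j$ with the one used in \cite{Aihara-Wang}. For the purposes of this paper the gap is immaterial, since every application is to a two-point algebra where $\sigma$ induces either the identity or the transposition; but as a proof of the general-$n$ statement you should either pin down the source's convention or state the formula in the inverse-indexed form you actually derived.
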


\section{Main Results}
\subsection{Main strategy}
Let $\mathsf{D^b(mod}\ \Lambda)$ be the derived category of $\mathsf{mod}\ \Lambda$; it is actually the localization of $\mathcal{K}_{\Lambda}$ with respect to quasi-isomorphisms. We note that both $\mathsf{D^b(mod}\ \Lambda)$ and $\mathcal{K}_{\Lambda}$ are triangulated categories. Let us start with the following easy observation.

\begin{proposition}[{\cite[Lemma 2.8]{Au-silting}}]\label{lemma-original}
Let $\Lambda$ and $\Gamma$ be algebras with a triangle equivalence $F: \mathsf{D^b(mod}\ \Lambda)\longrightarrow \mathsf{D^b(mod}\ \Gamma)$. Then, the following statements hold.
\begin{enumerate}
  \item $F$ sends silting complexes in $\mathcal{K}_{\Lambda}$ to silting complexes in $\mathcal{K}_{\Gamma}$.
  \item $F$ preserves the partial order on the set of silting complexes.
  \item If $T$ is a silting complex in $\mathcal{K}_{\Lambda}$, then $F(\mu_i^-(T))\simeq \mu_i^-(F(T))$.
\end{enumerate}
\end{proposition}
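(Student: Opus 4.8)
The plan is to exploit the fact that all three assertions are formal consequences of $F$ being a triangle equivalence, so that the entire proof reduces to unwinding the relevant definitions (silting, the partial order, and irreducible left mutation) and checking that each is stated in purely categorical terms preserved by any triangle equivalence. Accordingly, I would not attempt any computation with explicit complexes; instead I would argue at the level of the triangulated categories $\mathsf{D^b(mod}\ \Lambda)$ and $\mathsf{D^b(mod}\ \Gamma)$, using only that $F$ is additive, fully faithful, essentially surjective, and commutes with the shift functor $[1]$ and sends triangles to triangles.

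For part (1), I would first recall that $F$ restricts to a triangle equivalence between the thick subcategories $\mathcal{K}_{\Lambda}=\mathsf{thick}\ \Lambda$ and $\mathcal{K}_{\Gamma}=\mathsf{thick}\ \Gamma$; indeed, since $F(\Lambda)$ is a silting (in fact tilting) complex in $\mathcal{K}_{\Gamma}$ generating $\mathsf{thick}\ F(\Lambda)=\mathcal{K}_{\Gamma}$, the equivalence $F$ carries $\mathcal{K}_{\Lambda}$ onto $\mathcal{K}_{\Gamma}$. Then for a silting complex $T\in\mathcal{K}_{\Lambda}$, the presilting condition $\mathsf{Hom}_{\mathcal{K}_{\Lambda}}(T,T[i])=0$ for all $i>0$ transports directly through the isomorphism $\mathsf{Hom}_{\mathcal{K}_{\Lambda}}(T,T[i])\simeq\mathsf{Hom}_{\mathcal{K}_{\Gamma}}(F(T),F(T)[i])$, using $F\circ[1]\simeq[1]\circ F$; and the generation condition $\mathsf{thick}\ T=\mathcal{K}_{\Lambda}$ is preserved because $F$ is an equivalence sending thick subcategories to thick subcategories, whence $\mathsf{thick}\ F(T)=F(\mathsf{thick}\ T)=F(\mathcal{K}_{\Lambda})=\mathcal{K}_{\Gamma}$. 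Part (2) is then immediate from the same $\mathsf{Hom}$-isomorphism: the relation $T\geqslant S$ is defined by the vanishing of $\mathsf{Hom}_{\mathcal{K}_{\Lambda}}(T,S[i])$ for all $i>0$, and $F$ identifies these $\mathsf{Hom}$-spaces with $\mathsf{Hom}_{\mathcal{K}_{\Gamma}}(F(T),F(S)[i])$, so $T\geqslant S$ if and only if $F(T)\geqslant F(S)$.

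For part (3), I would return to the construction of the irreducible left mutation recalled in the excerpt. Writing $T=T_j\oplus(T/T_j)$ with $T_j$ indecomposable, the mutation $\mu_j^-(T)$ is built from a minimal left $\mathsf{add}(T/T_j)$-approximation $\pi\colon T_j\to Z$ and the triangle $T_j\xrightarrow{\pi}Z\to\mathsf{cone}(\pi)\to T_j[1]$. I would verify that $F$ transports each ingredient of this construction: $F(T_j)$ is again indecomposable (as $F$ is an equivalence), $F$ sends $\mathsf{add}(T/T_j)$ to $\mathsf{add}(F(T)/F(T_j))$, and, because $F$ is full and faithful, it sends the minimal left approximation $\pi$ to a minimal left $\mathsf{add}(F(T)/F(T_j))$-approximation $F(\pi)$ of $F(T_j)$. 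Applying $F$ to the defining triangle and using that $F$ is a triangle functor yields a triangle $F(T_j)\xrightarrow{F(\pi)}F(Z)\to F(\mathsf{cone}(\pi))\to F(T_j)[1]$, which exhibits $F(\mathsf{cone}(\pi))\simeq\mathsf{cone}(F(\pi))$ as the cone computing the mutation of $F(T)$ at $F(T_j)$; hence $F(\mu_j^-(T))\simeq\mu_j^-(F(T))$.

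The one point requiring genuine care, and the step I expect to be the main obstacle, is the claim that $F$ preserves minimality of approximations — that is, that $F(\pi)$ is again a \emph{minimal} left $\mathsf{add}(F(T)/F(T_j))$-approximation rather than merely a (possibly non-minimal) approximation. Minimality is a statement about the absence of nonzero split direct summands in the approximation map, equivalently that every endomorphism of the target compatible with $\pi$ is an isomorphism; I would establish this by observing that these notions are characterized through the radical of the relevant $\mathsf{Hom}$-spaces, and that a triangle equivalence, being an additive auto-equivalence at the level of $\mathsf{Hom}$-spaces, preserves radicals and hence left-minimal morphisms. Once minimality is secured, the indecomposability of $\mathsf{cone}(F(\pi))$ and the identification with $\mu_j^-(F(T))$ follow from \cite[Theorem 2.31]{AI-silting} applied in $\mathcal{K}_{\Gamma}$, completing the argument.
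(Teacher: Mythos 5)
The paper offers no proof of this proposition: it is quoted directly from \cite[Lemma 2.8]{Au-silting}, so there is nothing in-paper to compare against. Your proposal reconstructs the standard argument, and it is essentially correct: parts (1) and (2) do reduce to transporting the defining $\mathsf{Hom}$-vanishing and generation conditions through the equivalence, and for part (3) you correctly isolate the only non-trivial point, namely that $F(\pi)$ is again a \emph{minimal} left approximation. For that point the cleanest justification is simply full faithfulness: $\pi$ is left minimal iff every $\varphi$ with $\varphi\circ\pi=\pi$ is an isomorphism, and $F$ induces a bijection on such $\varphi$ preserving and reflecting isomorphisms; your remark about radicals of $\mathsf{Hom}$-spaces amounts to the same thing.

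The one step that is not sound as written is your justification that $F$ restricts to an equivalence $\mathcal{K}_{\Lambda}\to\mathcal{K}_{\Gamma}$. You argue this by asserting that $F(\Lambda)$ is a tilting complex in $\mathcal{K}_{\Gamma}$ with $\mathsf{thick}\ F(\Lambda)=\mathcal{K}_{\Gamma}$; but $\mathsf{thick}\ F(\Lambda)=F(\mathsf{thick}\ \Lambda)=F(\mathcal{K}_{\Lambda})$, so the asserted equality \emph{is} the claim being justified, and even placing $F(\Lambda)$ inside $\mathcal{K}_{\Gamma}$ presupposes the conclusion. This is circular, though the fact itself is standard and easily repaired: an object $X$ of $\mathsf{D^b(mod}\ \Lambda)$ lies in $\mathcal{K}_{\Lambda}$ (up to isomorphism) if and only if for every $Y\in\mathsf{D^b(mod}\ \Lambda)$ one has $\mathsf{Hom}(X,Y[i])=0$ for all but finitely many $i$; this characterization is intrinsic to the triangulated category and hence preserved by any triangle equivalence, which gives the restriction $F(\mathcal{K}_{\Lambda})=\mathcal{K}_{\Gamma}$ without circularity. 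With that substitution the proof is complete.
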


It is known from Rickard \cite{Rickard-tilting-complex} that the above triangle equivalence $F: \mathsf{D^b(mod}\ \Lambda)\rightarrow \mathsf{D^b(mod}\ \Gamma)$ exists if and only if there is a tilting complex $T\in \mathcal{K}_{\Lambda}$ such that $\Gamma\simeq\mathsf{End}_{\mathcal{K}_{\Lambda}}\ (T)$. If this is the case, we say that $\Lambda$ is derived equivalent to $\Gamma$. In this paper, we restrict our interests to two-point algebras.

Let $\Lambda$ be a two-point algebra and $T:=T_1\oplus T_2$ a tilting complex in $\mathcal{K}_{\Lambda}$. We define $\Gamma:=\mathsf{End}_{\mathcal{K}_{\Lambda}}\ (T)$, and denote by $P_1$, $P_2$ the indecomposable projective $\Gamma$-modules. Then, the triangle equivalence
\begin{center}
$F : \mathsf{D^b(mod}\ \Lambda) \overset{\sim}{\longrightarrow} \mathsf{D^b(mod}\ \Gamma)$
\end{center}
is given by mapping $T\mapsto \Gamma$. In this way, the indecomposable direct summand $T_i$ of $T$ is mapped to the indecomposable projective module $P_i$ of $\Gamma$. It turns out that $F$ naturally induces an isomorphism $K_0(\mathcal{K}_{\Lambda})\longrightarrow K_0(\mathcal{K}_\Gamma)$ of Grothendieck groups by $[T_i] \mapsto [P_i]$ for $i=1,2$. The following left mutation chains may be considered:
\begin{center}
$\vcenter{\xymatrix@C=0.4cm@R=1cm{T_1\oplus T_2\ar[rr]\ar[d]^{F}&&S\oplus T_2\ar[d]^{F}&\in \mathcal{K}_\Lambda\\
P_1\oplus P_2 \ar[rr]&&Q\oplus P_2& \in \mathcal{K}_\Gamma}}$.
\end{center}
Notice that $S\oplus T_2$ and $Q\oplus P_2$ are again silting but not necessarily tilting. Then, the following statements are implicit in Proposition \ref{lemma-original}.

\begin{lemma}\label{lemma-endomorphism-algebra}
Under the above setting, we have
\begin{center}
$\mathsf{End}_{\mathcal{K}_{\Lambda}}\ (S\oplus T_2)\simeq \mathsf{End}_{\mathcal{K}_\Gamma}\ (Q\oplus P_2)$.
\end{center}
\end{lemma}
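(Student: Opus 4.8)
The plan is to deduce the isomorphism directly from the order-theoretic and mutation-compatibility properties of the triangle equivalence $F$ recorded in Proposition \ref{lemma-original}, together with the fact that endomorphism algebras are categorical invariants. Since $F$ is a triangle equivalence, it is in particular fully faithful and additive, so for \emph{any} object $X\in\mathcal{K}_\Lambda$ it induces a ring isomorphism $\mathsf{End}_{\mathcal{K}_\Lambda}(X)\simeq\mathsf{End}_{\mathcal{K}_\Gamma}(F(X))$. Thus the entire content of the lemma reduces to identifying $F(S\oplus T_2)$ with $Q\oplus P_2$ inside $\mathcal{K}_\Gamma$.

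The key steps are as follows. First I would note that by construction $F(T_1\oplus T_2)\simeq P_1\oplus P_2$ with $F(T_i)\simeq P_i$, so in particular $F(T_2)\simeq P_2$. Second, the horizontal arrow $T_1\oplus T_2\to S\oplus T_2$ is the irreducible left mutation $\mu_1^-$ with respect to the summand $T_1$, i.e. $S\oplus T_2=\mu_1^-(T_1\oplus T_2)$. By Proposition \ref{lemma-original}(3), $F$ commutes with irreducible left mutations, so
\begin{center}
$F(S\oplus T_2)\simeq F\big(\mu_1^-(T_1\oplus T_2)\big)\simeq \mu_1^-\big(F(T_1\oplus T_2)\big)\simeq \mu_1^-(P_1\oplus P_2)=Q\oplus P_2$,
\end{center}
where the last equality is precisely the definition of the bottom arrow $P_1\oplus P_2\to Q\oplus P_2$ in the displayed diagram. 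Third, applying the fully faithful functor $F$ to $S\oplus T_2$ and invoking the additive isomorphism of endomorphism rings yields $\mathsf{End}_{\mathcal{K}_\Lambda}(S\oplus T_2)\simeq\mathsf{End}_{\mathcal{K}_\Gamma}(F(S\oplus T_2))\simeq\mathsf{End}_{\mathcal{K}_\Gamma}(Q\oplus P_2)$, which is the claim.

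The only subtle point, and the step I would be most careful about, is the compatibility of the indexing: one must check that the summand $S$ genuinely arises as the left mutation at the summand corresponding to $T_1$ (equivalently $P_1$), so that the $\mu_1^-$ on both sides refers to the \emph{same} position. This is where Proposition \ref{lemma-original}(3) does the real work, since it guarantees that $F$ respects not merely the existence of a mutation but its labelling by the summand being replaced; the identification $F(T_i)\simeq P_i$ fixes this correspondence. Once that bookkeeping is in place, everything else is formal. I do not expect any genuine obstacle here—the lemma is essentially a restatement of Proposition \ref{lemma-original} specialized to the two-summand situation, packaged so that it can be applied repeatedly along a mutation chain to produce the alternating pattern $\Gamma,\Gamma^{\mathrm{op}},\Gamma,\dots$ of endomorphism algebras described in the introduction.
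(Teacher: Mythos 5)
Your proposal is correct and follows essentially the same route as the paper: the paper's own proof also reduces the claim to identifying $F(S\oplus T_2)$ with $Q\oplus P_2$ (by observing that $F$ preserves triangles and minimal left approximations, hence carries the defining triangle $T_1\to T_2^{\oplus k}\to S\to T_1[1]$ to $P_1\to P_2^{\oplus k}\to Q\to P_1[1]$, which is exactly the content of Proposition \ref{lemma-original}(3) that you cite), and then concludes via full faithfulness of $F$. Your remark about the bookkeeping of summand labels is a fair point of care, but it is settled by the convention $F(T_i)\simeq P_i$ just as you say.
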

\begin{proof}
Since $F$ is a triangle equivalence, it preserves both triangles and minimal left approximations. Then, the triangle
\begin{center}
$T_1 \longrightarrow T_2^{\oplus k}\longrightarrow S\longrightarrow T_1[1] \in \mathcal{K}_{\Lambda}$
\end{center}
is mapped under $F$ to the triangle
\begin{center}
$P_1 \longrightarrow P_2^{\oplus k}\longrightarrow Q\longrightarrow P_1[1] \in \mathcal{K}_\Gamma$.
\end{center}
Then, it is not difficult to find the statement.
\end{proof}

\begin{lemma}\label{lemma-g-vector}
Assume that the above $T_i, S, P_i, Q$ are two-term complexes. Then, we have $g(S)=kg(T_2)-g(T_1)$ if $g(Q)=kg(P_2)-g(P_1)$.
\end{lemma}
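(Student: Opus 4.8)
The plan is to recognize the $g$-vector of a two-term complex as the coordinate vector of its class in the Grothendieck group, and then to move the relevant relation across $F$ via the induced isomorphism on $K_0$. For any two-term complex $X=(\bigoplus_i P_i^{\oplus b_i}\to\bigoplus_i P_i^{\oplus a_i})$ in $\mathcal{K}_{\Lambda}$, additivity in $K_0(\mathcal{K}_{\Lambda})$ gives $[X]=[X^0]-[X^{-1}]=\sum_i(a_i-b_i)[P_i]$, so $g(X)$ is exactly the coordinate vector of $[X]$ with respect to the standard basis $[P_1],\ldots,[P_n]$; the same holds verbatim in $\mathcal{K}_\Gamma$. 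This identification is available precisely because $T_i,S,P_i,Q$ are all assumed two-term, and this is the only place the hypothesis is used.

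First I would recall from the proof of Lemma \ref{lemma-endomorphism-algebra} that the defining triangle $T_1\to T_2^{\oplus k}\to S\to T_1[1]$ in $\mathcal{K}_{\Lambda}$ is carried by $F$ to the triangle $P_1\to P_2^{\oplus k}\to Q\to P_1[1]$ in $\mathcal{K}_\Gamma$ with the \emph{same} multiplicity $k$, since a triangle equivalence preserves both triangles and minimal left approximations. Applying the rule $[C]=[B]-[A]$ for a triangle $A\to B\to C\to A[1]$ to the first triangle yields $[S]=k[T_2]-[T_1]$ in $K_0(\mathcal{K}_{\Lambda})$; in coordinates this reads $g(S)=kg(T_2)-g(T_1)$. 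Next I would invoke the group isomorphism $F_\ast\colon K_0(\mathcal{K}_{\Lambda})\xrightarrow{\sim}K_0(\mathcal{K}_\Gamma)$ induced by $F$, which satisfies $F_\ast[T_i]=[P_i]$ and $F_\ast[S]=[Q]$. Because $F_\ast$ is an isomorphism, the identity $[S]=k[T_2]-[T_1]$ holds if and only if its image $[Q]=k[P_2]-[P_1]$ holds; rewriting both in coordinates through the previous paragraph gives exactly $g(S)=kg(T_2)-g(T_1)\Leftrightarrow g(Q)=kg(P_2)-g(P_1)$.

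I do not expect a serious obstacle, since the substantive input---that $F$ carries the mutation triangle to the mutation triangle with the same multiplicity $k$---has already been secured in Lemma \ref{lemma-endomorphism-algebra}. The one point that must be handled carefully is the bookkeeping behind the coordinate identification $g(X)=[X]$: it is valid only for two-term complexes, so the argument must explicitly invoke the two-term hypothesis on each of $T_i,S,P_i,Q$ before passing between $g$-vectors and Grothendieck classes.
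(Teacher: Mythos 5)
Your proof is correct and follows essentially the same route as the paper's: the paper likewise obtains $[S]=k[T_2]-[T_1]$ from the form of the mutation triangle and transports it through the induced isomorphism $K_0(\mathcal{K}_{\Lambda})\to K_0(\mathcal{K}_\Gamma)$ sending $[T_i]\mapsto [P_i]$. You merely spell out the coordinate identification between $g(X)$ and the class $[X]$ for two-term complexes, which the paper leaves implicit.
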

\begin{proof}
By the definition of $g$-vectors, it is obvious that $g(S)$ must be of the form $kg(T_2)-g(T_1)$. Then, it is enough to notice that there is an isomorphism $K_0(\mathcal{K}_{\Lambda})\to K_0(\mathcal{K}_\Gamma)$ of Grothendieck groups by $[T_i] \mapsto [P_i]$ for $i=1,2$.
\end{proof}

We provide two examples to illustrate how the above lemmas can be applied.

\begin{example}\label{example-kronecker-alg}
Let $\Delta_1=K(\xymatrix@C=0.7cm{1\ar@<0.5ex>[r]^{\mu_1}\ar@<-0.5ex>[r]_{\mu_2}&2})$ be the Kronecker algebra. It is known as a minimal $\tau$-tilting infinite algebra. We look at the $g$-vectors of two-term silting complexes appearing in mutation order $2,1,2,1,2, \cdots$. Let $P_1$ and $P_2$ be the indecomposable projective $\Delta_1$-modules. We have
\begin{center}
$P_1=$\scalebox{0.8}{$\vcenter{\xymatrix@C=0.01cm@R=0.2cm{
&e_1\ar@{-}[dl]\ar@{-}[dr]&\\
\mu_1&&\mu_2}}
\simeq \vcenter{\xymatrix@C=0.02cm@R=0.2cm{
&1\ar@{-}[dl]\ar@{-}[dr]&\\
2&&2}}$} and $P_2=e_2\simeq 2$.
\end{center}
In the mutation order $2,1,2,1,2, \cdots$, it follows from Remark \ref{silt-is-tilt} that any two-term silting complex is tilting since $\mathsf{Hom}_{\Delta_1}(P_1,P_2)=0$. It is easy to check that
\begin{center}
$\mu_{2}^-(\Delta_1)=\left [\begin{smallmatrix}
0\longrightarrow P_1\\
\oplus \\
P_2\overset{f}{\longrightarrow} P_1^{\oplus 2}
\end{smallmatrix}  \right ]$ with $f=\left ( \begin{smallmatrix}
\mu_1\\
\mu_2
\end{smallmatrix} \right )$.
\end{center}
Set $X:=(0\longrightarrow P_1)$ and $Y:=(P_2\overset{f}{\longrightarrow} P_1^{\oplus 2})$. Then,
\begin{center}
$\mathsf{Hom}_{\mathcal{K}_{\Delta_1}}(Y,X)=0$, $\mathsf{Hom}_{\mathcal{K}_{\Delta_1}}(X,X)=\left \{\left (0, 1\right )\right \}$,
\end{center}

\begin{center}
$\mathsf{Hom}_{\mathcal{K}_{\Delta_1}}(Y,Y)=\left \{\left (1, \left ( \begin{smallmatrix}
1&0\\
0&1
\end{smallmatrix} \right )\right )\right \}$,
\end{center}

\begin{center}
$\mathsf{Hom}_{\mathcal{K}_{\Delta_1}}(X,Y)=\mathsf{span} \left \{\left (0, \left ( \begin{smallmatrix}
1\\
0
\end{smallmatrix} \right )\right ), \left (0, \left ( \begin{smallmatrix}
0\\
1
\end{smallmatrix} \right )\right )\right \}$.
\end{center}
By direct calculation, we find that $\mathsf{End}_{\mathcal{K}_{\Delta_1}}(\mu_{2}^-(\Delta_1))\simeq \Delta_1^{\text{op}}=K(\xymatrix@C=0.7cm{1&2\ar@<0.5ex>[l]^{\ }\ar@<-0.5ex>[l]_{\ }})$. We then use Lemma \ref{lemma-endomorphism-algebra} to find that
\begin{center}
$\mathsf{End}_{\mathcal{K}_{\Delta_1}}(\mu_1^-(\mu_{2}^-(\Delta_1)))\simeq \mathsf{End}_{\mathcal{K}_{\Delta_1^{\text{op}}}}(\mu_{1}^-(\Delta_1^{\text{op}}))$.
\end{center}
Since the calculation of $\mu_{1}^-(\Delta_1^{\text{op}})$ is exactly the same as that of $\mu_{2}^-(\Delta_1)$, we deduce that
\begin{center}
$g(\mu_1^-(\mu_{2}^-(\Delta_1)))=\begin{smallmatrix}
(3,-2)\\
\oplus \\
(2,-1)
\end{smallmatrix}$
\end{center}
by Lemma \ref{lemma-g-vector}. It is obvious that $\mathsf{End}_{\mathcal{K}_{\Delta_1^{\text{op}}}}(\mu_{1}^-(\Delta_1^{\text{op}}))\simeq (\Delta_1^{\text{op}})^{\text{op}}\simeq \Delta_1$ and hence,
\begin{center}
$\begin{aligned}\mathsf{End}_{\mathcal{K}_{\Delta_1}}(\mu_2^-(\mu_1^-(\mu_{2}^-(\Delta_1))))
&\simeq \mathsf{End}_{\mathcal{K}_{\Delta_1^{\text{op}}}}(\mu_2^-(\mu_{1}^-(\Delta_1^{\text{op}})))\\
&\simeq \mathsf{End}_{\mathcal{K}_{\Delta_1}}(\mu_2^-(\Delta_1))\\
&\simeq \Delta_1^{\text{op}}
\end{aligned}$.
\end{center}

By repeating the above procedures, we obtain a left mutation chain
\begin{center}
$\Delta_1=:T_0\longrightarrow T_1\longrightarrow T_2\longrightarrow T_3\longrightarrow \cdots  \in \mathcal{H}(\mathsf{2\text{-}silt}\ \Delta_1)$,
\end{center}
where $T_i$ is always tilting, $\mathsf{End}_{\mathcal{K}_{\Delta_1}}(T_{2n})\simeq \Delta_1$ and $\mathsf{End}_{\mathcal{K}_{\Delta_1}}(T_{2n+1})\simeq \Delta_1^{\text{op}}$. Since a basic two-term silting complex $T$ is uniquely determined by its $g$-vector $g(T)$ (see Proposition \ref{prop-g-vector-injection}), we consider the $g$-vector of $T_i$. Then, we have the following chain
\begin{center}
$\begin{smallmatrix}
(1,0)\\
\oplus \\
(0,1)
\end{smallmatrix}\rightarrow \begin{smallmatrix}
(1,0)\\
\oplus \\
(2,-1)
\end{smallmatrix}\rightarrow \begin{smallmatrix}
(3,-2)\\
\oplus \\
(2,-1)
\end{smallmatrix} \rightarrow \begin{smallmatrix}
(3,-2)\\
\oplus \\
(4,-3)
\end{smallmatrix} \rightarrow \cdots\rightarrow g(T_{2n})\rightarrow g(T_{2n+1})\rightarrow \cdots$,
\end{center}
where the $g$-vector $g(T_{2n})$ is given by $\left ( \begin{smallmatrix}
3& -2 \\
2& -1
\end{smallmatrix} \right )^n$ so that
\begin{center}
$g(T_{2n})=\begin{smallmatrix}
(2n+1, -2n )\\
\oplus \\
(2n, -2n+1)
\end{smallmatrix}$ and $g(T_{2n+1})=\begin{smallmatrix}
(2n+1, -2n )\\
\oplus \\
(2n+2, -2n-1)
\end{smallmatrix}$.
\end{center}
It is obvious that the above chain of $g$-vectors cannot reach $(-1,0)\oplus (0,-1)$. This implies that $\Delta_1$ is $\tau$-tilting infinite.
\end{example}

\begin{example}\label{Omega-2}
Recall that $\Omega_2=KQ_2/\langle\alpha^2, \beta^2\rangle$ with
\begin{center}
$Q_2:\xymatrix@C=1cm{1 \ar[r]^{\mu}\ar@(dl,ul)^{\alpha}& 2 \ar@(ur,dr)^{\beta}}$.
\end{center}
It is first known as a $\tau$-tilting infinite gentle algebra in \cite[Theorem 1.1]{P-gentle}, and then is known to be minimal $\tau$-tilting infinite in \cite[Lemma 3.4]{W-two-point}. We look at the $g$-vectors of two-term silting complexes appearing in mutation order $2,1,2,1,2, \cdots$. Let $P_1$ and $P_2$ be the indecomposable projective $\Omega_2$-modules. We have
\begin{center}
$P_1=$\scalebox{0.8}{$\vcenter{\xymatrix@C=0.1cm@R=0.2cm{
&e_1\ar@{-}[dl]\ar@{-}[dr]&\\
\alpha\ar@{-}[d]&&\mu \ar@{-}[d] \\
\alpha\mu\ar@{-}[d]&& \mu\beta\\
\alpha\mu\beta&&}}
\simeq \vcenter{\xymatrix@C=0.2cm@R=0.2cm{
&1\ar@{-}[dl]\ar@{-}[dr]&\\
1\ar@{-}[d]&&2\ar@{-}[d] \\
2\ar@{-}[d]&& 2\\
2&&}}$} and $P_2=$\scalebox{0.8}{$\vcenter{\xymatrix@C=0.01cm@R=0.3cm{
e_2\ar@{-}[d]\\
\beta\\}}
\simeq \vcenter{\xymatrix@C=0.01cm@R=0.2cm{
2\ar@{-}[d]\\
2\\}}$}.
\end{center}
Similar to the Kronecker algebra case, any two-term silting complex in the mutation order $2,1,2,1,2, \cdots$ is tilting due to $\mathsf{Hom}_{\Omega_2}(P_1,P_2)=0$. As usual, we start with
\begin{center}
$\mu_{2}^-(\Omega_2)=\left [\begin{smallmatrix}
0\longrightarrow P_1\\
\oplus \\
P_2\overset{f}{\longrightarrow} P_1^{\oplus 2}
\end{smallmatrix}  \right ]$, where $f=\left ( \begin{smallmatrix}
\mu\\
\mu\beta
\end{smallmatrix} \right )$.
\end{center}
Set $X:=(0\longrightarrow P_1)$ and $Y:=(P_2\overset{f}{\longrightarrow} P_1^{\oplus 2})$. Then,
\begin{center}
$\mathsf{Hom}_{\mathcal{K}_{\Omega_2}}(Y,X)=0$, $\mathsf{Hom}_{\mathcal{K}_{\Omega_2}}(X,X)=\mathsf{span} \left \{(0,1), (0,\alpha) \right \}$,
\end{center}

\begin{center}
$\mathsf{Hom}_{\mathcal{K}_{\Omega_2}}(Y,Y)=\mathsf{span}\left \{\left (1, \left ( \begin{smallmatrix}
1&0\\
0&1
\end{smallmatrix} \right )\right ), \left (\beta, \left ( \begin{smallmatrix}
0&1\\
0&0
\end{smallmatrix} \right )\right )\right \}$,
\end{center}

\begin{center}
$\mathsf{Hom}_{\mathcal{K}_{\Omega_2}}(X,Y)=\mathsf{span} \left \{\left (0, \left ( \begin{smallmatrix}
1\\
0
\end{smallmatrix} \right )\right ), \left (0, \left ( \begin{smallmatrix}
0\\
1
\end{smallmatrix} \right )\right ), \left (0, \left ( \begin{smallmatrix}
\alpha\\
0
\end{smallmatrix} \right )\right ), \left (0, \left ( \begin{smallmatrix}
0\\
\alpha
\end{smallmatrix} \right )\right )\right \}$.
\end{center}
We define $a:=(0,\alpha)$, $b:=\left (\beta, \left ( \begin{smallmatrix}
0&1\\
0&0
\end{smallmatrix} \right )\right )$ and $c:=\left (0, \left ( \begin{smallmatrix}
0\\
1
\end{smallmatrix} \right )\right )$. Then,
\begin{center}
$a^2=b^2=0$, $bc=\left (0, \left ( \begin{smallmatrix}
1\\
0
\end{smallmatrix} \right )\right )$, $ca=\left (0, \left ( \begin{smallmatrix}
0\\
\alpha
\end{smallmatrix} \right )\right )$, $bca=\left (0, \left ( \begin{smallmatrix}
\alpha\\
0
\end{smallmatrix} \right )\right )$.
\end{center}
Thus, the endomorphism algebra $\mathsf{End}_{\mathcal{K}_{\Omega_2}}(\mu_{2}^-(\Omega_2))$ is isomorphic to $KQ/I$ with
\begin{center}
$Q: \xymatrix@C=1cm{1 \ar@(dl,ul)^{a}& 2\ar[l]_{c}\ar@(ur,dr)^{b}}$ bounded by $I=\langle a^2,b^2\rangle$,
\end{center}
which is exactly the opposite algebra $\Omega_2^{\text{op}}$ of $\Omega_2$. By Lemma \ref{lemma-endomorphism-algebra}, we find
\begin{center}
$\mathsf{End}_{\mathcal{K}_{\Omega_2}}(\mu_1^-(\mu_{2}^-(\Omega_2)))\simeq \mathsf{End}_{\mathcal{K}_{\Omega_2^\text{op}}}(\mu_{1}^-(\Omega_2^{\text{op}}))$.
\end{center}

One can check that the indecomposable projective $\Omega_2^{\text{op}}$-modules are displayed as
\begin{center}
\scalebox{0.8}{$\vcenter{\xymatrix@C=0.2cm@R=0.2cm{
1\ar@{-}[d]\\
1\\}}$} and \scalebox{0.8}{$\vcenter{\xymatrix@C=0.2cm@R=0.2cm{
&2\ar@{-}[dl]\ar@{-}[dr]&\\
2\ar@{-}[d]&&1\ar@{-}[d] \\
1\ar@{-}[d]&& 1\\
1&&}}$}.
\end{center}
Hence, the calculation of $\mu_{1}^-(\Omega_2^{\text{op}})$ is actually the same as that of $\mu_{2}^-(\Omega_2)$. By Lemma \ref{lemma-g-vector},
\begin{center}
$g(\mu_1^-(\mu_{2}^-(\Omega_2)))=\begin{smallmatrix}
(3,-2)\\
\oplus \\
(2,-1)
\end{smallmatrix}$.
\end{center}
Similar to Example \ref{example-kronecker-alg}, we also have
$\mathsf{End}_{\mathcal{K}_{\Omega_2^{\text{op}}}}(\mu_{1}^-(\Omega_2^{\text{op}}))\simeq (\Omega_2^{\text{op}})^{\text{op}}\simeq \Omega_2$ and
\begin{center}
$\mathsf{End}_{\mathcal{K}_{\Omega_2}}(\mu_2^-(\mu_1^-(\mu_{2}^-(\Omega_2))))
\simeq \mathsf{End}_{\mathcal{K}_{\Omega_2^{\text{op}}}}(\mu_2^-(\mu_{1}^-(\Omega_2^{\text{op}})))
\simeq \mathsf{End}_{\mathcal{K}_{\Omega_2}}(\mu_2^-(\Omega_2))
\simeq \Omega_2^{\text{op}}$.
\end{center}
The remaining part is the same as the part in Example \ref{example-kronecker-alg} and we omit the details.
\end{example}

\begin{remark}\label{Hasse-quiver-kronecker-alg}
Let $\Delta=\Delta_1$ or $\Omega_2$. It is easy to see that
\begin{center}
$\mu_{1}^-(\Delta)=\left [\begin{smallmatrix}
P_1\longrightarrow 0\\
\oplus \\
0 \longrightarrow P_2
\end{smallmatrix}  \right ]$ and $\mu_2^-(\mu_{1}^-(\Delta))=\left [\begin{smallmatrix}
P_1\longrightarrow 0\\
\oplus \\
P_2 \longrightarrow 0
\end{smallmatrix}  \right ]$.
\end{center}
There exists an algebra isomorphism $\sigma:\Delta^\text{op}\rightarrow \Delta$ satisfying $\sigma(e_1^\ast)=e_2$ and $\sigma(e_2^\ast)=e_1$. According to the anti-automorphism $S_\sigma$ introduced in Lemma \ref{lem-symmetry}, we find that the $g$-vectors for $\Delta$ are
\begin{center}
\begin{tikzpicture}[shorten >=1pt, auto, node distance=0cm,
   node_style/.style={font=},
   edge_style/.style={draw=black}]
\node[node_style] (v0) at (0,0) {$\begin{smallmatrix}
(1,0)\\
\oplus \\
(0,1)
\end{smallmatrix}$};
\node[node_style] (v1) at (12,0) {$\begin{smallmatrix}
(-1,0)\\
\oplus \\
(0,1)
\end{smallmatrix}$};
\node[node_style] (v) at (12,-4) {$\begin{smallmatrix}
(-1,0)\\
\oplus \\
(0,-1)
\end{smallmatrix}$};
\node[node_style] (v2) at (0,-2) {$\begin{smallmatrix}
(1,0)\\
\oplus \\
(2,-1)
\end{smallmatrix}$};
\node[node_style] (v21) at (0,-4) {$\begin{smallmatrix}
(3,-2)\\
\oplus \\
(2,-1)
\end{smallmatrix}$};
\node[node_style] (v212) at (2,-4) {$\begin{smallmatrix}
(3,-2)\\
\oplus \\
(4,-3)
\end{smallmatrix}$};
\node[node_style] (v2121) at (4,-4) {$\cdots$};

\node[node_style] (v212121) at (6,-4) {$\begin{smallmatrix}
(2,-3)\\
\oplus \\
(3,-4)
\end{smallmatrix}$};
\node[node_style] (v2121212) at (8,-4) {$\begin{smallmatrix}
(2,-3)\\
\oplus \\
(1,-2)
\end{smallmatrix}$};
\node[node_style] (v21212121) at (10,-4) {$\begin{smallmatrix}
(0,-1)\\
\oplus \\
(1,-2)
\end{smallmatrix}$};
\draw[->]  (v0) edge node{\ } (v1);
\draw[->]  (v1) edge node{\ } (v);
\draw[->]  (v0) edge node{\ } (v2);
\draw[->]  (v2) edge node{\ } (v21);
\draw[->]  (v21) edge node{\ } (v212);
\draw[->]  (v212) edge node{\ } (v2121);
\draw[->]  (v2121) edge node{\ } (v212121);
\draw[->]  (v212121) edge node{\ } (v2121212);
\draw[->]  (v2121212) edge node{\ } (v21212121);
\draw[->]  (v21212121) edge node{\ } (v);
\end{tikzpicture}.
\end{center}
It turns out that $\Delta_1$ and $\Omega_2$ share the same $g$-vectors.
\end{remark}

Recall that $\Lambda$ is a two-point algebra. Let $T\not\simeq \Lambda[1]$ be a two-term tilting complex in $\mathcal{K}_{\Lambda}$, which is obtained by iterated mutations from $\Lambda$. According to the form of the Hasse quiver $\mathcal{H}(\mathsf{2\text{-}silt}\ \Lambda)$ (\ref{the-form-of-Hasse-quiver}), we may assume, without loss of generality, that
\begin{center}
$T=\mu_2^-(\cdots (\mu_1^-(\mu_2^-(\mu_1^-(\Lambda)))))$.
\end{center}
Suppose that $\mu_1^-(T)$ is two-term (but $\mu_2^-(T)$ is not). We define $\Gamma:=\mathsf{End}_{\mathcal{K}_{\Lambda}}\ (T)$ and consider the following left mutation chains:
\begin{center}
$\vcenter{\xymatrix@C=0.5cm{\Lambda: &T\ar[r]\ar[d]^{F}&\mu_1^-(T)\ar[d]^{F}\\
&\Gamma \ar[r]&\mu_1^-(\Gamma)\ar[r]&\mu_2^-(\mu_1^-(\Gamma))\ar[r]&\mu_1^-(\mu_2^-(\mu_1^-(\Gamma)))\ar[r]&\cdots}}$.
\end{center}
We notice also from (\ref{the-form-of-Hasse-quiver}) that if $\Gamma$ is $\tau$-tilting infinite, then either the left mutation chain in mutation order $1,2,1,2,1,\cdots$ or the left mutation chain in mutation order $2,1,2,1,2,\cdots$ is an infinite chain.

\begin{lemma}\label{lemma-derived-infinite}
Under the above setting, $\Lambda$ is $\tau$-tilting infinite if the left mutation chain starting with $\Gamma$ in mutation order $1,2,1,2,1,\cdots$ is an infinite chain.
\end{lemma}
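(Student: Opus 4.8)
The plan is to move everything through the triangle equivalence $F$ and then to recognise the resulting chain inside the rigid Hasse quiver (\ref{the-form-of-Hasse-quiver}) of the two-point algebra $\Lambda$. Since $T$ is a tilting complex, $F\colon \mathsf{D^b(mod}\ \Lambda)\to\mathsf{D^b(mod}\ \Gamma)$ is a triangle equivalence with $F(T)=\Gamma$ and $F(T_i)=P_i$; by Proposition \ref{lemma-original} it sends silting complexes to silting complexes, preserves the partial order, and commutes with left mutation and with the shift. Consequently $F^{-1}$ carries the chain $\Gamma\to\mu_1^-(\Gamma)\to\mu_2^-(\mu_1^-(\Gamma))\to\cdots$ term by term to the chain $T\to\mu_1^-(T)\to\mu_2^-(\mu_1^-(T))\to\cdots$ in $\mathcal{K}_\Lambda$, and as $F^{-1}$ is a bijection on isomorphism classes the latter also has infinitely many distinct terms. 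Because $T$ lies on the order $1,2,1,\ldots$ branch of (\ref{the-form-of-Hasse-quiver}) and $\mu_1^-(T)$ is again two-term, this $\Lambda$-side chain is precisely the continuation of that branch. Hence it suffices to prove that the chain never meets $\Lambda[1]$: then the branch is an infinite full subquiver of $\mathcal{H}(\mathsf{2\text{-}silt}\ \Lambda)$ and $\Lambda$ is $\tau$-tilting infinite.

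The point requiring care — and the step I expect to be the main obstacle — is that $F$ does not preserve two-termness. The hypothesis should be read as saying that the $\Gamma$-chain stays inside $\mathsf{2\text{-}silt}\ \Gamma$, i.e. never reaches $\Gamma[1]$; under $F^{-1}$ this only says that the $\Lambda$-chain never reaches $T[1]=F^{-1}(\Gamma[1])$, which is a priori weaker than never reaching $\Lambda[1]$ (note $T[1]<\Lambda[1]$, since $T<\Lambda$). I would close this gap by contraposition. Suppose $\Lambda$ is $\tau$-tilting finite, so that $\mathcal{H}(\mathsf{2\text{-}silt}\ \Lambda)$ is the finite quiver (\ref{the-form-of-Hasse-quiver}) and the order $1,2,1,\ldots$ branch from $\Lambda$ has finite length $a$, with $R_a=\Lambda[1]$; here I write $R_0=\Lambda, R_1, R_2,\ldots$ for this mutation sequence. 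The crucial claim is that $(R_n)$ is then periodic up to shift, $R_{n+a}\simeq R_n[1]$ for every $n$. Granting this, and writing $T=R_j$ with $j<a$ (using $T\not\simeq\Lambda[1]$), we obtain $R_{j+a}\simeq T[1]$, so the $\Lambda$-chain does reach $T[1]$; transporting back through $F$, the $\Gamma$-chain reaches $\Gamma[1]$ and is therefore finite, contradicting the hypothesis. Thus $\Lambda$ must be $\tau$-tilting infinite.

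The only genuinely non-formal ingredient is the shift-periodicity $R_{n+a}\simeq R_n[1]$, which I would prove by induction using that left mutation commutes with $[1]$ (Proposition \ref{lemma-original}), the base case being $R_a=\Lambda[1]=R_0[1]$. The subtlety is the bookkeeping of the two indecomposable summands: travelling along the branch from $\Lambda$ to $\Lambda[1]$ permutes the labelling of the summands by the idempotents of $\Lambda$ through a permutation determined by the parity of $a$, and this permutation is exactly compensated by the shift of the alternating mutation order between step $a{+}n$ and step $n$. Hence, whenever $R_{a+n}\simeq R_n[1]$, mutating $R_{a+n}$ in the prescribed order reproduces $R_{n+1}[1]$, completing the induction. (One can sanity-check this against Example \ref{example-kronecker-alg}, where the explicit $g$-vectors exhibit precisely this shift-periodicity, and against $A_2$, where the odd-length branch already forces the summand swap.) Beyond this careful summand-bookkeeping I do not anticipate any difficulty; everything else is a direct application of Proposition \ref{lemma-original}, Lemma \ref{lemma-g-vector} and the shape (\ref{the-form-of-Hasse-quiver}).
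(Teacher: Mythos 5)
Your argument is correct, and it takes a genuinely different route from the paper's. The paper's proof is a two-line affair: since $F$ preserves the partial order and left mutations, it restricts to a poset isomorphism $\{U\in\mathsf{2\text{-}silt}\ \Lambda\mid T\geqslant U\geqslant\Lambda[1]\}\overset{\sim}{\longrightarrow}\{V\in\mathsf{2\text{-}silt}\ \Gamma\mid \Gamma\geqslant V\geqslant F(\Lambda[1])\}$ (both sides consist of two-term complexes because $\Lambda\geqslant T$ gives $\Gamma[1]=F(T[1])\leqslant F(\Lambda[1])\leqslant\Gamma$), and the infinitude of the right-hand side is then read off from the hypothesis. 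As you correctly sense, the delicate point in either formulation is to locate $F(\Lambda[1])$ relative to the infinite branch of $\Gamma$: the hypothesis only controls the chain down to $\Gamma[1]=F(T[1])$, which is strictly below $F(\Lambda[1])$. Your contrapositive via the shift-periodicity $R_{n+a}\simeq R_n[1]$ attacks exactly this point and closes the argument: if the branch of $\Lambda$ through $T=R_j$ terminated at $R_a=\Lambda[1]$, periodicity would force $R_{j+a}\simeq T[1]$, hence $V_a=F(T[1])=\Gamma[1]$, and the $\Gamma$-chain would consist of the finitely many two-term complexes $V_0>\cdots>V_a$, contradicting the hypothesis. What your route buys is an explicit reason why the $\Gamma$-chain must eventually reach $\Gamma[1]$ when $\Lambda$ is $\tau$-tilting finite, a point the paper's proof leaves implicit.

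The one step you assert rather than prove is that the permutation of summand labels accumulated along $R_0=\Lambda\to\cdots\to R_a=\Lambda[1]$ is exactly the one compensated by the parity shift of the mutation order. Concretely, writing $N_1,\dots,N_a$ for the successive new indecomposable summands introduced along the branch starting with $\mu_1^-$, your induction needs $N_{a-1}\simeq P_1[1]$ and $N_a\simeq P_2[1]$, i.e.\ that the branch produces $P_1[1]$ \emph{before} $P_2[1]$; were the order reversed, the mutation at step $a+1$ would instead launch the shift of the \emph{other} branch and the induction would break. This does hold, but it is not purely formal bookkeeping. The cleanest justification is via $g$-vectors: along the branch the cones spanned by $g(N_{n-1})$ and $g(N_n)$ rotate monotonically counterclockwise away from the first quadrant (each mutation flips across the newly shared ray and cannot return, by Proposition \ref{prop-g-vector-injection}), so the ray $(-1,0)$ is met strictly before $(0,-1)$, forcing $g(N_{a-1})=(-1,0)$ and $g(N_a)=(0,-1)$ and hence $N_{a-1}\simeq P_1[1]$, $N_a\simeq P_2[1]$. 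With that supplied, your induction goes through and the proof is complete; without it, the ``compensation'' claim is a true but unproven assertion.
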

\begin{proof}
Since the triangle equivalence $F$ preserves the partial order $\geqslant$ and left mutations, it gives an isomorphism
\begin{center}
$\{U\in \mathsf{2\text{-}silt}\ \Lambda \mid T \geqslant U \geqslant \Lambda[1] \}$

$\overset{\sim}{\longrightarrow}
\{V\in \mathsf{2\text{-}silt}\ \Gamma \mid \Gamma \geqslant V \geqslant \mu_2^+(\cdots (\mu_1^+(\mu_2^+(\mu_1^+(\Gamma[1])))))\}$.
\end{center}
By the assumption on $\Gamma$, we conclude that the set $\mathsf{2\text{-}silt}\ \Lambda$ is infinite.
\end{proof}

\begin{proposition}\label{Omega-3}
Recall that $\Omega_3=KQ_3/\langle\beta_1^2,\beta_2^2,\beta_1\beta_2,\beta_2\beta_1\rangle$ with
\begin{center}
$Q_3: \xymatrix@C=1cm{1\ar[r]^{\mu}& 2 \ar@(ul,ur)^{\beta_1}\ar@(dl,dr)_{\beta_2}}$.
\end{center}
Then, $\Omega_3$ is minimal $\tau$-tilting infinite.
\end{proposition}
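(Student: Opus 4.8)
The statement splits into showing that $\Omega_3$ is $\tau$-tilting infinite and that every proper quotient of $\Omega_3$ is $\tau$-tilting finite. For the first part the plan is to run the strategy of Subsection 3.1, exactly parallel to Example \ref{Omega-2}. Since $\mathsf{Hom}_{\Omega_3}(P_1,P_2)=e_2\Omega_3 e_1=0$, Remark \ref{silt-is-tilt} guarantees that every two-term silting complex in the mutation order $2,1,2,\dots$ is tilting, so I may take $T:=\mu_2^-(\Omega_3)$, a two-term tilting complex. With $P_1=e_1\Omega_3$ and $P_2=e_2\Omega_3$ one has $\mathsf{End}(P_1)=K$ and $\dim_K\mathsf{Hom}(P_2,P_1)=3$, so the minimal left $\mathsf{add}(P_1)$-approximation of $P_2$ is $P_2\to P_1^{\oplus 3}$, $(\mu,\mu\beta_1,\mu\beta_2)^{\mathsf T}$; thus $T=X\oplus Y$ with $X=(0\to P_1)$, $Y=(P_2\to P_1^{\oplus 3})$ and $g(Y)=(3,-1)$.

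The heart of the argument is the computation of $\Gamma:=\mathsf{End}_{\mathcal K_{\Omega_3}}(T)$. Computing the four spaces $\mathsf{Hom}(X,X)$, $\mathsf{Hom}(Y,X)$, $\mathsf{Hom}(X,Y)$ and $\mathsf{End}(Y)$ as in Example \ref{Omega-2}, I expect to find $\mathsf{Hom}(X,X)=K$, $\mathsf{Hom}(Y,X)=0$, $\mathsf{End}(Y)$ the three-dimensional local algebra $K\langle u,v\rangle/(u,v)^2$ (two loops with radical square zero), and $\mathsf{Hom}(X,Y)$ three-dimensional but generated by only two elements as a left $\mathsf{End}(Y)$-module. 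Reading off the quiver of $\Gamma$, it then has two arrows $X\to Y$, two loops at $Y$, and no arrow $Y\to X$, and factoring out the ideal generated by the two loops collapses $\Gamma$ onto the Kronecker algebra $K(X\rightrightarrows Y)$. By Example \ref{example-kronecker-alg} the Kronecker algebra is $\tau$-tilting infinite, so Lemma \ref{lem-quotient-and-idempotent}(1) forces the two-point algebra $\Gamma$ to be $\tau$-tilting infinite, whence one of its two mutation chains is infinite. Feeding this into Lemma \ref{lemma-derived-infinite} (with the mutation order of $\Omega_3$ chosen to match, and with Lemmas \ref{lemma-endomorphism-algebra} and \ref{lemma-g-vector} transporting the mutation data along $F$) then gives that $\Omega_3$ is $\tau$-tilting infinite. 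Equivalently, one may verify directly that along the order $2,1,2,\dots$ chain the first coordinates of the $g$-vectors strictly increase at each step, so the chain can never reach $\Omega_3[1]$.

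For minimality I would first pin down the minimal nonzero two-sided ideals of $\Omega_3$. The elements annihilated by $\mathsf{rad}\,\Omega_3$ on both sides span $K\mu\beta_1\oplus K\mu\beta_2$, so every minimal nonzero ideal has the form $K(\lambda_1\mu\beta_1+\lambda_2\mu\beta_2)$ with $(\lambda_1,\lambda_2)\neq(0,0)$. Because the relations of $\Omega_3$ only express that $\mathsf{rad}^2$ vanishes at the vertex $2$, any invertible linear substitution in $\beta_1,\beta_2$ extends to an automorphism of $\Omega_3$, and these act transitively on this $\mathbb P^1$-family of ideals. Consequently every proper quotient of $\Omega_3$ contains, up to an automorphism, the ideal $\langle\mu\beta_1\rangle$, so by Lemma \ref{lem-quotient-and-idempotent}(1) it suffices to prove that $\Omega_3/\langle\mu\beta_1\rangle$ is $\tau$-tilting finite. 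In $\Omega_3/\langle\mu\beta_1\rangle$ the arrow $\beta_1$ becomes central and lies in the radical, so Lemma \ref{lem-center} yields a poset isomorphism between $\mathsf{2\text{-}silt}\,(\Omega_3/\langle\mu\beta_1\rangle)$ and $\mathsf{2\text{-}silt}\,(\Omega_3/\langle\mu\beta_1,\beta_1\rangle)$. The latter algebra is $KQ'/\langle\beta_2^2\rangle$, where $Q'$ is $1\xrightarrow{\mu}2$ with a single loop $\beta_2$ at $2$; this is a string algebra with no bands, hence representation-finite and in particular $\tau$-tilting finite. This settles minimality.

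The main obstacle is the endomorphism-algebra computation: carefully identifying $\Gamma=\mathsf{End}(\mu_2^-(\Omega_3))$ and, above all, checking that $\mathsf{Hom}(X,Y)$ requires two generators over $\mathsf{End}(Y)$, since it is precisely this that produces the two parallel arrows and therefore the Kronecker quotient responsible for $\tau$-tilting infiniteness. A secondary delicate point is making sure Lemma \ref{lemma-derived-infinite} is invoked in the correct mutation order, so that the $\tau$-tilting infiniteness of $\Gamma$ genuinely transfers to $\Omega_3$ despite the fact that the derived equivalence $F$ need not preserve two-term-ness. By contrast the minimality half is comparatively routine once one spots the central element $\beta_1$ and the transitive automorphism action on the minimal ideals.
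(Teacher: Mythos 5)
Your setup and the computation of $\Gamma:=\mathsf{End}_{\mathcal{K}_{\Omega_3}}(\mu_2^-(\Omega_3))$ match the paper: the paper finds $\Gamma\simeq\Delta_2=KQ/\langle b_1^2,b_2^2,b_1b_2,b_2b_1,b_1c_2,b_2c_1,b_1c_1-b_2c_2\rangle$, which is exactly your description (two loops at $Y$ with $(\text{rad})^2$-type behaviour, $\mathsf{Hom}(X,Y)$ three-dimensional with two generators $c_1,c_2$ over $\mathsf{End}(Y)$, and $b_1c_1=b_2c_2$ the extra socle element), and killing the loops does give the Kronecker algebra. The one genuine gap is in how you feed this into Lemma \ref{lemma-derived-infinite}. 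That lemma needs the \emph{specific} chain starting at $\Gamma$ in mutation order $1,2,1,\dots$ to be infinite, because that is the chain corresponding under $F$ to the continuation $\mu_1^-(T),\mu_2^-\mu_1^-(T),\dots$ of the two-term chain through $T=\mu_2^-(\Omega_3)$. Knowing only that $\Gamma$ is $\tau$-tilting infinite gives you ``one of the two chains is infinite,'' and your parenthetical ``with the mutation order of $\Omega_3$ chosen to match'' cannot rescue this: the order on the $\Omega_3$-side is already forced by the choice $T=\mu_2^-(\Omega_3)$ (the other order for $\Omega_3$ reaches $\Omega_3[1]$ in two steps and is useless). The fix is one line and uses data you already computed: since $\mathsf{Hom}_{\mathcal{K}_{\Omega_3}}(Y,X)=0$, one has $\mu_2^-(\Gamma)=Q_1\oplus Q_2[1]$ and $\mu_1^-\mu_2^-(\Gamma)=\Gamma[1]$, so the $2,1$-chain of $\Gamma$ is finite; by the shape (\ref{the-form-of-Hasse-quiver}) of the Hasse quiver of a two-point algebra, $\tau$-tilting infiniteness of $\Gamma$ then forces the $1,2,1,\dots$ chain to be the infinite one. (The paper instead asserts directly, via Example \ref{example-kronecker-alg}, that the $1,2,1,\dots$ chain of $\Delta_2$ is infinite.) Your alternative suggestion of tracking the growth of first coordinates of $g$-vectors would also work but is only sketched.

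For minimality you take a genuinely different and valid route. The paper reduces to $\overline{\Omega}_3=\Omega_3/\langle\mu\beta_2\rangle$ (implicitly using the classification of minimal ideals that you spell out: they lie in the two-sided socle $K\mu\beta_1\oplus K\mu\beta_2$ and the linear substitutions in $\beta_1,\beta_2$ act transitively on them) and then computes the six $g$-vectors of $\overline{\Omega}_3$ directly. You instead observe that $\beta_1$ becomes central in $\Omega_3/\langle\mu\beta_1\rangle$ and invoke Lemma \ref{lem-center} to pass to $KQ_1/\langle\beta^2\rangle$, a representation-finite string algebra. This avoids the $g$-vector computation entirely and is, if anything, cleaner; your explicit justification of ``it suffices to consider one minimal ideal'' is also a welcome elaboration of a step the paper leaves implicit.
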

\begin{proof}
(1) We show that $\Omega_3$ is $\tau$-tilting infinite. Let $P_1$ and $P_2$ be the indecomposable projective $\Omega_3$-modules. Then,
\begin{center}
$P_1=$\scalebox{0.8}{$\vcenter{\xymatrix@C=0.1cm@R=0.2cm{
&e_1\ar@{-}[d]&\\
&\mu\ar@{-}[dl]\ar@{-}[dr]&\\
\mu\beta_1 && \mu\beta_2}}
\simeq \vcenter{\xymatrix@C=0.1cm@R=0.2cm{
&1\ar@{-}[d]&\\
&2\ar@{-}[dl]\ar@{-}[dr]&\\
2 && 2}}$} and $P_2=$\scalebox{0.8}{$\vcenter{\xymatrix@C=0.01cm@R=0.2cm{
&e_2\ar@{-}[dl]\ar@{-}[dr]&\\
\beta_1&&\beta_2\\}}
\simeq \vcenter{\xymatrix@C=0.01cm@R=0.2cm{
&2\ar@{-}[dl]\ar@{-}[dr]&\\
2&&2\\}}$}.
\end{center}
Obviously,
\begin{center}
$\begin{aligned}
\mathsf{Hom}_{\Omega_3}(P_1,P_1)&=1,\ \mathsf{Hom}_{\Omega_3}(P_1,P_2)=0,\\
\mathsf{Hom}_{\Omega_3}(P_2,P_2)&=\mathsf{span}\{1, \beta_1, \beta_2\},\\
\mathsf{Hom}_{\Omega_3}(P_2,P_1)&=\mathsf{span}\{\mu, \mu\beta_1, \mu\beta_2\}.
\end{aligned}$
\end{center}

We start with
\begin{center}
$\mu_{2}^-(\Omega_3)=\left [\begin{smallmatrix}
0\longrightarrow P_1\\
\oplus \\
P_2\overset{f}{\longrightarrow} P_1^{\oplus 3}
\end{smallmatrix}  \right ]$, where $f=\left ( \begin{smallmatrix}
\mu\\
\mu\beta_1\\
\mu\beta_2
\end{smallmatrix} \right )$.
\end{center}
This is obviously a tilting complex so that the assumption in Lemma \ref{lemma-derived-infinite} is satisfied. Let $X:=(0\longrightarrow P_1)$ and $Y:=(P_2\overset{f}{\longrightarrow} P_1^{\oplus 3})$. Then,
\begin{center}
$\mathsf{Hom}_{\mathcal{K}_{\Omega_3}}(Y,X)=0$, $\mathsf{Hom}_{\mathcal{K}_{\Omega_3}}(X,X)=\{(0,1)\}$,
\end{center}
\begin{center}
$\mathsf{Hom}_{\mathcal{K}_{\Omega_3}}(X,Y)=\mathsf{span} \left \{\left (0, \left ( \begin{smallmatrix}
1\\
0\\
0
\end{smallmatrix} \right )\right ), \left (0, \left ( \begin{smallmatrix}
0\\
1\\
0
\end{smallmatrix} \right )\right ), \left (0, \left ( \begin{smallmatrix}
0\\
0\\
1
\end{smallmatrix} \right )\right )\right \}$,
\end{center}
\begin{center}
$\mathsf{Hom}_{\mathcal{K}_{\Omega_3}}(Y,Y)=\mathsf{span} \left \{\left (1, \left ( \begin{smallmatrix}
1&0&0\\
0&1&0\\
0&0&1
\end{smallmatrix} \right )\right ), \left (\beta_1, \left ( \begin{smallmatrix}
0&1&0\\
0&0&0\\
0&0&0
\end{smallmatrix} \right )\right ), \left (\beta_2, \left ( \begin{smallmatrix}
0&0&1\\
0&0&0\\
0&0&0
\end{smallmatrix} \right )\right )\right \}$.
\end{center}
Set
\begin{center}
$b_1:=\left (\beta_1, \left ( \begin{smallmatrix}
0&1&0\\
0&0&0\\
0&0&0
\end{smallmatrix} \right )\right )$, $b_2:=\left (\beta_2, \left ( \begin{smallmatrix}
0&0&1\\
0&0&0\\
0&0&0
\end{smallmatrix} \right )\right )$, $c_1:=\left (0, \left ( \begin{smallmatrix}
0\\
1\\
0
\end{smallmatrix} \right )\right )$, $c_2:=\left (0, \left ( \begin{smallmatrix}
0\\
0\\
1
\end{smallmatrix} \right )\right )$,
\end{center}
and then,
\begin{center}
$b_1^2=b_2^2=b_1b_2=b_2b_1=b_1c_2=b_2c_1=0$, $b_1c_1=b_2c_2=\left (0, \left ( \begin{smallmatrix}
1\\
0\\
0
\end{smallmatrix} \right )\right )$.
\end{center}
It turns out that $\mathsf{End}_{\mathcal{K}_{\Omega_3}}(\mu_{2}^-(\Omega_3))$ is isomorphic to $\Delta_2:=KQ/I$ with
\begin{center}
$Q: \xymatrix@C=1cm{1 & 2\ar@<0.5ex>[l]^{c_1}\ar@<-0.5ex>[l]_{c_2} \ar@(dl,dr)_{b_2}\ar@(ul,ur)^{b_1}}$ and
$I=\langle b_1^2,b_2^2,b_1b_2,b_2b_1,b_1c_2,b_2c_1, b_1c_1-b_2c_2\rangle$.
\end{center}
By Example \ref{example-kronecker-alg}, one finds that the left mutation chain starting with $\Delta_2$ in mutation order $1,2,1,2,1,\cdots$ is an infinite chain. Thus, $\Omega_3$ is $\tau$-tilting infinite following Lemma \ref{lemma-derived-infinite}.

(2) Since the socle of $\Omega_3$ is $K\mu\beta_1\oplus K\mu\beta_2\oplus K\beta_1\oplus K\beta_2$, it suffices to consider
\begin{center}
$\overline{\Omega}_3:=\Omega_3/\langle \mu\beta_2\rangle$\ $(\simeq \Omega_3/\langle \mu\beta_1\rangle)$
\end{center}
for the minimality. Instead of calculating the two-term silting complexes in $\mathcal{K}_{\overline{\Omega}_3}$, we give the $g$-vectors for $\overline{\Omega}_3$. Note that $\overline{\Omega}_3$ is an algebra with radical cube zero, it is not difficult to find the $g$-vectors for $\overline{\Omega}_3$ by direct calculation (whether in $\tau$-tilting theory or silting theory). The $g$-vectors for $\overline{\Omega}_3$ are given by
\begin{center}
\begin{tikzpicture}[shorten >=1pt, auto, node distance=0cm,
   node_style/.style={font=},
   edge_style/.style={draw=black}]
\node[node_style] (v) at (0,0) {$\begin{smallmatrix}
(1,0)\\
\oplus \\
(0,1)
\end{smallmatrix}$};
\node[node_style] (v1) at (0,-2) {$\begin{smallmatrix}
(-1,0)\\
\oplus \\
(0,1)
\end{smallmatrix}$};
\node[node_style] (v2) at (2,0) {$\begin{smallmatrix}
(1,0)\\
\oplus \\
(2,-1)
\end{smallmatrix}$};
\node[node_style] (v21) at (4,0) {$\begin{smallmatrix}
(1,-1)\\
\oplus \\
(2,-1)
\end{smallmatrix}$};
\node[node_style] (v212) at (6,0) {$\begin{smallmatrix}
(1,-1)\\
\oplus \\
(0,-1)
\end{smallmatrix}$};
\node[node_style] (v0) at (6,-2) {$\begin{smallmatrix}
(-1,0)\\
\oplus \\
(0,-1)
\end{smallmatrix}$.};

\draw[->]  (v) edge node{ } (v1);
\draw[->]  (v1) edge node{ } (v0);
\draw[->]  (v) edge node{ } (v2);
\draw[->]  (v2) edge node{ } (v21);
\draw[->]  (v21) edge node{ } (v212);
\draw[->]  (v212) edge node{ } (v0);
\end{tikzpicture}
\end{center}
Thus, $\overline{\Omega}_3$ is $\tau$-tilting finite and $\Omega_3$ is minimal $\tau$-tilting infinite.
\end{proof}

\subsection{Two-point monomial algebras with radical cube zero}
\ \\
\vspace{-0.3cm}

We define $Q(m,n)$ as the quiver consisting of $m$ loops on vertex 1, $n$ loops on vertex 2, one arrow from 1 to 2 and one arrow from 2 to 1, i.e.,
\begin{center}
$Q(m,n):=\xymatrix@C=1.2cm{1\ar@<0.5ex>[r]^{\mu}\ar@(ul,ur)^{\alpha_1}\ar@(ld,lu)^{\vdots}\ar@(dr,dl)^{\alpha_m}
&2\ar@<0.5ex>[l]^{\nu}\ar@(ul,ur)^{\beta_1}\ar@(ru,rd)^{\vdots}\ar@(dr,dl)^{\beta_n}}$.
\end{center}

\begin{theorem}\label{result-rad-cube-zero}
Let $\Lambda$ be a two-point connected monomial algebra with radical cube zero. Then, $\Lambda$ is $\tau$-tilting finite if and only if it does not have one of the Kronecker algebra $\Delta_1$, $\Omega_3$ and $\Omega_3^{\emph{op}}$ as a quotient algebra.
\end{theorem}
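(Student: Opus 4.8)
The ``only if'' direction is immediate: by Lemma \ref{lem-quotient-and-idempotent}(1) every quotient of a $\tau$-tilting finite algebra is again $\tau$-tilting finite, so it is enough to recall that the three listed algebras are $\tau$-tilting infinite. This is Example \ref{example-kronecker-alg} for the Kronecker algebra $\Delta_1$, Proposition \ref{Omega-3} for $\Omega_3$, and then Lemma \ref{lem-oppsite-algebra} for $\Omega_3^{\mathrm{op}}$. Hence if $\Lambda$ admits one of $\Delta_1,\Omega_3,\Omega_3^{\mathrm{op}}$ as a quotient it cannot be $\tau$-tilting finite. The whole content is therefore the converse, and I plan to argue it in contrapositive form: assuming $\Lambda$ has none of these as a quotient, I read off strong combinatorial restrictions on its quiver and relations, and then verify $\tau$-tilting finiteness for the finitely many algebras that survive.

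First I would translate each forbidden quotient into a local condition on the quiver and the monomial relations. Write $p$ (resp.\ $q$) for the number of arrows $1\to 2$ (resp.\ $2\to 1$) and $s,t$ for the numbers of loops at the two vertices. Two parallel arrows in the same direction span no relation, since there is no composable path between them, so any such pair already cuts out a copy of $\Delta_1$; thus the absence of a Kronecker quotient forces $p\le 1$ and $q\le 1$. Next, a copy of $\Omega_3$ is cut out exactly when a connecting arrow and two loops at its \emph{target} have both length-two composites nonzero (one may always add the relations $\beta_i\beta_j=0$ afterwards), and $\Omega_3^{\mathrm{op}}$ arises in the mirror situation at the \emph{source}; using Lemma \ref{lem-oppsite-algebra} to pass to $\Lambda^{\mathrm{op}}$ and a relabelling of the two vertices, I reduce to $(p,q)=(1,0)$ and $(p,q)=(1,1)$, in which the exclusion of $\Omega_3$ and $\Omega_3^{\mathrm{op}}$ says that at most one loop composes nontrivially with each connecting arrow on each side.

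The second reduction removes the remaining loops through Lemma \ref{lem-center}. A loop $\beta$ that squares to zero, does not compose with the connecting arrow(s) on the relevant side, and annihilates the other loops is a central element of $\mathsf{rad}\ \Lambda$, so $\langle\beta\rangle$ is a central ideal and $\mathsf{2\text{-}silt}\ \Lambda\cong\mathsf{2\text{-}silt}\ (\Lambda/\langle\beta\rangle)$. Stripping all such loops collapses the survivors to a short explicit list with at most one effective loop at each vertex. For these base cases I would compute $\mathsf{2\text{-}silt}\ \Lambda$ directly: many are radical square zero after the reduction, whence their $\tau$-tilting finiteness follows from the classification in \cite{Ada-rad-square-0}, while for the genuinely radical-cube-zero survivors I would exhibit a finite connected component of $\mathcal{H}(\mathsf{2\text{-}silt}\ \Lambda)$ by following the two mutation orders as in the shape \eqref{the-form-of-Hasse-quiver}. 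Since a finite connected component exhausts the Hasse quiver \cite[Corollary 2.38]{AIR}, this yields $\tau$-tilting finiteness, and the injectivity of $g$-vectors (Proposition \ref{prop-g-vector-injection}) turns these computations into bookkeeping on integer vectors rather than on complexes.

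The hard part is the second reduction together with the base-case verification. Establishing that the non-interacting loops are genuinely central, so that Lemma \ref{lem-center} applies, requires checking for a monomial algebra that none of the composites $\mu\beta$, $\beta\nu$, or $\beta\beta'$ survives; this can fail precisely when two loops at a vertex multiply nontrivially, and such loop-loop products are exactly what the forbidden-quotient hypotheses do not directly control. I therefore expect the delicate step to be showing that whenever surviving loop-loop products occur, they either force one of the forbidden quotients after all or leave an algebra whose left-mutation chains still terminate at $\Lambda[1]$. Carrying this out is the finite but intricate case analysis over the admissible choices of which length-two paths are declared nonzero.
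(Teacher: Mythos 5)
Your outline matches the paper's in broad strokes (quotient lemma for the ``only if'' direction, quiver restrictions from the forbidden quotients, a central-element reduction via Lemma \ref{lem-center}, then explicit $g$-vector computations for the survivors), but the step you yourself flag as delicate --- what to do when loop--loop products such as $\beta_i\beta_j$ or $\alpha_i\alpha_j$ survive in $\Lambda$ --- is a genuine gap, and your proposed resolutions (``they force a forbidden quotient after all'' or ``the mutation chains still terminate'') are not how it goes and would not be easy to carry out. The point you are missing is that the hypothesis $\mathsf{rad}^3\,\Lambda=0$ settles this immediately: every length-two oriented cycle (each $\alpha_i\alpha_j$, each $\beta_i\beta_j$, and also $\mu\nu$ and $\nu\mu$) lies in $\mathsf{rad}^2\,\Lambda$, hence is annihilated by $\mathsf{rad}\,\Lambda$ on both sides and commutes with the idempotents, so it is \emph{central} regardless of whether it is zero. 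Lemma \ref{lem-center} therefore lets you kill all of these products at the outset, producing $\widetilde{\Lambda}$ with the same poset $\mathsf{2\text{-}silt}$, and only \emph{after} that do the loops not appearing in a surviving composite $\mu\beta_i$, $\beta_j\nu$, $\nu\alpha_k$, $\alpha_\ell\mu$ become central and removable. The forbidden-quotient hypotheses are never needed to control the loop--loop products; they only bound how many loops interact with the connecting arrows. This is the one place the radical cube zero assumption does real work, and without it your ``finite but intricate case analysis'' is not actually a proof.

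Two smaller inaccuracies in the base-case discussion: after the reduction the surviving algebras can have \emph{two} effective loops at a vertex (the paper's cases $(1,2,1,2)$ and $(1,2,1,1)$, where one loop survives because $\nu\alpha_k\neq 0$ and a different one because $\alpha_\ell\mu\neq 0$), so your claim of ``at most one effective loop at each vertex'' undercounts the list; and these survivors are genuinely radical cube zero, not radical square zero, so the classification of \cite{Ada-rad-square-0} does not dispose of them. The paper instead computes one mutation chain $\mathsf{T}$ explicitly for the case $(1,1,1,1)$ and uses the anti-automorphisms $S_\sigma$, $S_{\sigma'}$ of Lemma \ref{lem-symmetry} together with Proposition \ref{prop-exactly-two} to assemble the full finite Hasse quiver from $\mathsf{T}$ and its three images, then observes the other two cases share the same $g$-vectors.
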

\begin{proof}
If $\Lambda$ has one of $\Delta_1$, $\Omega_3$ and $\Omega_3^{\text{op}}$ as a quotient algebra, then $\Lambda$ is $\tau$-tilting infinite by Lemma \ref{lem-quotient-and-idempotent} (1). In the following, we assume that $\Lambda$ does not have one of $\Delta_1$, $\Omega_3$ and $\Omega_3^{\text{op}}$ as a quotient algebra. Then, the Gabriel quiver of $\Lambda$ is either $Q(m,n)$ or the subquiver of $Q(m,n)$ by deleting $\mu$ (or $\nu$), for some non-negative integers $m,n$. Moreover, the latter case is obviously covered by the former one.

We assume that $\Lambda=KQ(m,n)/I$ with a monomial ideal $I$. Note that the elements of $\mathsf{rad}^2\ KQ(m,n)$ are all linear combinations of the following elements:
\begin{enumerate}
  \item[(a)] $\mu\nu$, $\alpha_i\alpha_j$ for $i,j\in \{1,2,\cdots, m\}$;
  
  \item[(b)] $\nu\mu$, $\beta_i\beta_j$ for $i,j\in \{1,2,\cdots, n\}$;
  
  \item[(c)] $\mu\beta_s, \beta_s\nu, \nu\alpha_t, \alpha_t\mu$ for $s\in \{1,2,\cdots,n\}$ and $t\in \{1,2,\cdots,m\}$.
\end{enumerate}
Since $\Lambda$ is a radical cube zero algebra, the elements in $(a)$ and $(b)$ are included in the center of $\Lambda$. Let $J_{ab}$ be the two-sided ideal of $\Lambda$ generated by the elements in $(a)$ and $(b)$.
Then, we define $\widetilde{\Lambda}:=\Lambda/J_{ab}$.

Since $\Lambda$ does not have $\Omega_3$ or $\Omega_3^{\text{op}}$ as a quotient, there are at most one $\mu\beta_i\notin I$, at most one $\beta_j\nu\notin I$, at most one $\nu\alpha_k\notin I$ and at most one $\alpha_\ell\mu\notin I$, for $i, j\in \{1,2,\cdots,n\}$ and $k,\ell\in \{1,2,\cdots,m\}$. The existence of $i,j,k,\ell$ provides the fact that $\beta_i, \beta_j, \alpha_k, \alpha_\ell$ are not included in the center of $\widetilde{\Lambda}$. We set
\begin{center}
$J_c:=\left\langle\beta_s, \alpha_t \mid
\begin{matrix}
s\neq i,j \ \text{and}\ t\neq k, \ell \ \text{if}\  i,j,k,\ell \ \text{exist}\\
s\in \{1,2,\cdots,n\}, t\in \{1,2,\cdots,m\}
\end{matrix}
\right\rangle$,
\end{center}
this is a two-sided ideal of $\widetilde{\Lambda}$ generated by central elements. Then, we define $\Lambda':=\widetilde{\Lambda}/J_c$. By Lemma \ref{lem-center}, it is true that
\begin{center}
$\mathsf{2\text{-}silt}\ \Lambda\simeq \mathsf{2\text{-}silt}\ \widetilde{\Lambda}\simeq \mathsf{2\text{-}silt}\ \Lambda'$.
\end{center}

If all $i,j,k,\ell$ exist, then we can choose $(i,j,k,\ell)$ as $(1,1,1,1)$, $(1,2,1,2)$ or $(1,2,1,1)$, up to isomorphism and opposite algebras. Let $P_1$ and $P_2$ be the indecomposable projective $\Lambda'$-modules. We check the $\tau$-tilting finiteness of $\Lambda'$ case by case.
\begin{itemize}
\item In the case of $(1,1,1,1)$, the quiver of $\Lambda'$ is displayed as
\begin{center}
$\xymatrix@C=1cm{1\ar@(dl,ul)^{\alpha_1}\ar@<0.5ex>[r]^{\mu}&2\ar@<0.5ex>[l]^{\nu}\ar@(ur,dr)^{\beta_1}}$.
\end{center}
We have
\begin{center}
$P_1=$\scalebox{0.8}{$\vcenter{\xymatrix@C=0.1cm@R=0.2cm{
&e_1\ar@{-}[dl]\ar@{-}[dr]&\\
\alpha_1\ar@{-}[d] && \mu\ar@{-}[d]\\
\alpha_1\mu && \mu\beta_1}}
\simeq \vcenter{\xymatrix@C=0.1cm@R=0.2cm{
&1\ar@{-}[dl]\ar@{-}[dr]&\\
1\ar@{-}[d] && 2\ar@{-}[d]\\
2 && 2}}$} and $P_2=$\scalebox{0.8}{$\vcenter{\xymatrix@C=0.1cm@R=0.2cm{
&e_2\ar@{-}[dl]\ar@{-}[dr]&\\
\nu\ar@{-}[d] && \beta_1\ar@{-}[d]\\
\nu\alpha_1 && \beta_1\nu}}
\simeq \vcenter{\xymatrix@C=0.1cm@R=0.2cm{
&2\ar@{-}[dl]\ar@{-}[dr]&\\
1\ar@{-}[d] && 2\ar@{-}[d]\\
1 && 1}}$}.
\end{center}
By direct calculation, we find the following mutation chain $\mathsf{T}$ in $\mathcal{H}(\mathsf{2\text{-}silt}\ \Lambda')$,
\begin{center}
\begin{tikzpicture}[shorten >=1pt, auto, node distance=0cm,
   node_style/.style={font=},
   edge_style/.style={draw=black}]
\node[node_style] (v0) at (-0.5,0) {$\left [\begin{smallmatrix}
0\longrightarrow P_1\\
\oplus \\
0\longrightarrow P_2
\end{smallmatrix}  \right ]$};
\node[node_style] (v2) at (2,0) {$\left [\begin{smallmatrix}
0\longrightarrow P_1\\
\oplus \\
P_2\overset{f}{\longrightarrow} P_1^{\oplus 2}
\end{smallmatrix}  \right ]$};
\node[node_style] (v21) at (4.8,0) {$\left [\begin{smallmatrix}
P_2\overset{\mu}{\longrightarrow} P_1\\
\oplus \\
P_2\overset{f}{\longrightarrow} P_1^{\oplus 2}
\end{smallmatrix}  \right ]$,};

\node[node_style] (v212) at (7.6,0) {where $f=\left ( \begin{smallmatrix}
\mu\\
\mu\beta_1
\end{smallmatrix} \right )$.};

\draw[->]  (v0) edge node{\ } (v2);
\draw[->]  (v2) edge node{\ } (v21);
\end{tikzpicture}
\end{center}
In fact, we may explain the second step as follows. 
Set $X:=(0\longrightarrow P_1)$ and $Y:=(P_2\overset{f}{\longrightarrow} P_1^{\oplus 2})$. 
To compute $\mu_X^-(X\oplus Y)$, we take a triangle
\begin{center}
$\xymatrix@C=0.7cm{X\ar[r]^{\pi}&Y \ar[r]&\mathsf{cone}(\pi)\ar[r]&X[1]}$ with $\pi=\left ( 0,\left ( \begin{smallmatrix}
0\\
1
\end{smallmatrix} \right )\right )$.
\end{center}
On the one hand, if we compose $\pi$ with the endomorphism
\begin{center}
$\xymatrix@C=1.2cm@R=1cm{
Y: & P_2\ar[r]^{f}\ar[d]_{k_1e_2+k_2\beta_1} &P_1^{\oplus 2}\ar[d]^{\left ( \begin{smallmatrix}
k_1&k_2\\
0&k_1
\end{smallmatrix} \right )}\\
Y: & P_2\ar[r]^{f} &P_1^{\oplus 2}}$
\end{center}
for $k_1,k_2\in K$, then all elements of $\mathsf{Hom}_{\mathcal{K}_{\Lambda'}}(X,Y)$ are obtained. On the other hand, if $\left ( \begin{smallmatrix}
k_1&k_2\\
0&k_1
\end{smallmatrix} \right )\left ( \begin{smallmatrix}
0\\
1
\end{smallmatrix} \right )=\left ( \begin{smallmatrix}
0\\
1
\end{smallmatrix} \right )$, then $k_1=1$ and $k_2=0$. This implies that $\pi$ is a minimal left $\mathsf{add}(Y)$-approximation. Then, it is not difficult to find $\mathsf{cone}(\pi)=(P_2\overset{\mu}{\longrightarrow} P_1)$.

We observe that there exist two algebra isomorphisms $\sigma, \sigma':{\Lambda'}^\text{op}\rightarrow \Lambda'$ satisfying
\begin{center}
$\sigma(e_1^\ast)=e_2$, $\sigma(e_2^\ast)=e_1$ and $\sigma'(e_1^\ast)=e_1$, $\sigma'(e_2^\ast)=e_2$.
\end{center}
According to the anti-automorphism introduced in Lemma \ref{lem-symmetry}, there are other left mutation chains $S_\sigma(\mathsf{T})$, $S_{\sigma'}(\mathsf{T})$, $S_{\sigma'}(S_\sigma(\mathsf{T}))=S_{\sigma}(S_{\sigma'}(\mathsf{T}))$ in $\mathcal{H}(\mathsf{2\text{-}silt}\ \Lambda')$ whose $g$-vectors are displayed as
\begin{center}
$\begin{smallmatrix}
(1,-1)\\
\oplus \\
(1,-2)
\end{smallmatrix} \rightarrow \begin{smallmatrix}
(0,-1)\\
\oplus \\
(1,-2)
\end{smallmatrix} \rightarrow \begin{smallmatrix}
(0,-1)\\
\oplus \\
(-1,0)
\end{smallmatrix}$,\  $\begin{smallmatrix}
(-1,1)\\
\oplus \\
(-2,1)
\end{smallmatrix} \rightarrow \begin{smallmatrix}
(-1,0)\\
\oplus \\
(-2,1)
\end{smallmatrix} \rightarrow \begin{smallmatrix}
(-1,0)\\
\oplus \\
(0,-1)
\end{smallmatrix}$,\  $\begin{smallmatrix}
(0,1)\\
\oplus \\
(1,0)
\end{smallmatrix}\rightarrow \begin{smallmatrix}
(0,1)\\
\oplus \\
(-1,2)
\end{smallmatrix} \rightarrow \begin{smallmatrix}
(-1,1)\\
\oplus \\
(-1,2)
\end{smallmatrix}$,
\end{center}
respectively. Since each indecomposable two-term presilting complex in $\mathcal{K}_{\Lambda'}$ is a direct summand of exactly two two-term silting complexes in $\mathsf{2\text{-}silt}\ \Lambda'$, the $g$-vectors for $\Lambda'$ must be given by
\begin{center}
\begin{tikzpicture}[shorten >=1pt, auto, node distance=0cm,
   node_style/.style={font=},
   edge_style/.style={draw=black}]
\node[node_style] (v0) at (0,0) {$\begin{smallmatrix}
(1,0)\\
\oplus \\
(0,1)
\end{smallmatrix}$};
\node[node_style] (v1) at (2,0) {$\begin{smallmatrix}
(-1,2)\\
\oplus \\
(0,1)
\end{smallmatrix}$};
\node[node_style] (v12) at (4,0) {$\begin{smallmatrix}
(-1,2)\\
\oplus \\
(-1,1)
\end{smallmatrix}$};
\node[node_style] (v121) at (6,0) {$\begin{smallmatrix}
(-2,1)\\
\oplus \\
(-1,1)
\end{smallmatrix}$};
\node[node_style] (v1212) at (8,0) {$\begin{smallmatrix}
(-2,1)\\
\oplus \\
(-1,0)
\end{smallmatrix}$};
\node[node_style] (v) at (8,-2) {$\begin{smallmatrix}
(-1,0)\\
\oplus \\
(0,-1)
\end{smallmatrix}$};
\node[node_style] (v2) at (0,-2) {$\begin{smallmatrix}
(1,0)\\
\oplus \\
(2,-1)
\end{smallmatrix}$};
\node[node_style] (v21) at (2,-2) {$\begin{smallmatrix}
(1,-1)\\
\oplus \\
(2,-1)
\end{smallmatrix}$};
\node[node_style] (v212) at (4,-2) {$\begin{smallmatrix}
(1,-1)\\
\oplus \\
(1,-2)
\end{smallmatrix}$};
\node[node_style] (v2121) at (6,-2) {$\begin{smallmatrix}
(0,-1)\\
\oplus \\
(1,-2)
\end{smallmatrix}$};

\draw[->]  (v0) edge node{\ } (v1);
\draw[->]  (v1) edge node{\ } (v12);
\draw[->]  (v12) edge node{\ } (v121);
\draw[->]  (v121) edge node{\ } (v1212);
\draw[->]  (v1212) edge node{\ } (v);
\draw[->]  (v0) edge node{\ } (v2);
\draw[->]  (v2) edge node{\ } (v21);
\draw[->]  (v21) edge node{\ } (v212);
\draw[->]  (v212) edge node{\ } (v2121);
\draw[->]  (v2121) edge node{\ } (v);
\end{tikzpicture}.
\end{center}
This implies that $\Lambda'$ is $\tau$-tilting finite.

  \item In the case of $(1,2,1,2)$, we have
\begin{center}
$P_1=$\scalebox{0.8}{$\vcenter{\xymatrix@C=0.1cm@R=0.2cm{
&e_1\ar@{-}[dl]\ar@{-}[d]\ar@{-}[dr]&\\
\alpha_2\ar@{-}[d] &\alpha_1& \mu\ar@{-}[d]\\
\alpha_2\mu && \mu\beta_1}}
\simeq \vcenter{\xymatrix@C=0.2cm@R=0.2cm{
&1\ar@{-}[dl]\ar@{-}[d]\ar@{-}[dr]&\\
1\ar@{-}[d] &1& 2\ar@{-}[d]\\
2 && 2}}$} and $P_2=$\scalebox{0.8}{$\vcenter{\xymatrix@C=0.1cm@R=0.2cm{
&e_2\ar@{-}[dl]\ar@{-}[d]\ar@{-}[dr]&\\
\nu\ar@{-}[d] &\beta_1& \beta_2\ar@{-}[d]\\
\nu\alpha_1 && \beta_2\nu}}
\simeq \vcenter{\xymatrix@C=0.2cm@R=0.2cm{
&2\ar@{-}[dl]\ar@{-}[d]\ar@{-}[dr]&\\
1\ar@{-}[d] &2& 2\ar@{-}[d]\\
1 && 1}}$}.
\end{center}

\item
In the case of $(1,2,1,1)$, we have
\begin{center}
$P_1=$\scalebox{0.8}{$\vcenter{\xymatrix@C=0.1cm@R=0.2cm{
&e_1\ar@{-}[dl] \ar@{-}[dr]&\\
\alpha_2\ar@{-}[d] & & \mu\ar@{-}[d]\\
\alpha_2\mu && \mu\beta_1}}
\simeq \vcenter{\xymatrix@C=0.2cm@R=0.2cm{
&1\ar@{-}[dl] \ar@{-}[dr]&\\
1\ar@{-}[d] & & 2\ar@{-}[d]\\
2 && 2}}$} and $P_2=$\scalebox{0.8}{$\vcenter{\xymatrix@C=0.1cm@R=0.2cm{
&e_2\ar@{-}[dl]\ar@{-}[d]\ar@{-}[dr]&\\
\nu\ar@{-}[d] &\beta_1& \beta_2\ar@{-}[d]\\
\nu\alpha_1 && \beta_2\nu}}
\simeq \vcenter{\xymatrix@C=0.2cm@R=0.2cm{
&2\ar@{-}[dl]\ar@{-}[d]\ar@{-}[dr]&\\
1\ar@{-}[d] &2& 2\ar@{-}[d]\\
1 && 1}}$}.
\end{center}
Then, it is not difficult to check that both of them share the same $g$-vectors with $\Lambda'$ in the case of $(1,1,1,1)$. Thus, $\Lambda'$ is $\tau$-tilting finite in both these two cases.
\end{itemize}

If one of $i,j,k,\ell$ does not exist, then $\Lambda'$ is a quotient algebra of one of the above three cases. Hence, $\Lambda'$ is also $\tau$-tilting finite in this case.
\end{proof}

\subsection{Two-point monomial algebras with $Q_1$ and $Q_2$}
\ \\
\vspace{-0.3cm}

Recall that $\Omega_1=KQ_1/\langle\beta^4\rangle$ with
\begin{center}
$Q_1: \xymatrix@C=1cm{1 \ar[r]^{\mu}& 2\ar@(ur,dr)^{\beta } }$.
\end{center}

\begin{proposition}\label{Omega-1}
$\Omega_1$ is minimal $\tau$-tilting infinite.
\end{proposition}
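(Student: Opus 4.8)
The plan is to follow the same two-part strategy used for $\Omega_2$ and $\Omega_3$: first establish that $\Omega_1$ is $\tau$-tilting infinite via an infinite left mutation chain, then verify minimality by showing every proper quotient is $\tau$-tilting finite. For part (1), I would work in the mutation order $2,1,2,1,\ldots$ and begin by recording the indecomposable projectives. Here $P_2 = e_2\Omega_1$ is the uniserial module $\beta^4=0$ of Loewy length $4$ (composition factors $2,2,2,2$), while $P_1 \simeq (1;\,2,2,2,2)$ with top $1$ and radical $P_2$. Since $\mathsf{Hom}_{\Omega_1}(P_1,P_2)=0$ (there is no arrow $2\to 1$), Remark~\ref{silt-is-tilt} guarantees that every two-term silting complex in this mutation order is in fact a tilting complex, so the derived-equivalence machinery of Lemma~\ref{lemma-endomorphism-algebra} and Lemma~\ref{lemma-g-vector} applies. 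I would then compute the first left mutation $\mu_2^-(\Omega_1)$, expecting a triangle $P_2 \xrightarrow{f} P_1^{\oplus k} \to \mathsf{cone}(f) \to P_2[1]$ for the appropriate minimal left $\mathsf{add}(P_1)$-approximation $f$, set $X:=(0\to P_1)$ and $Y:=\mathsf{cone}(f)$, and calculate the four Hom-spaces $\mathsf{Hom}_{\mathcal{K}_{\Omega_1}}(X,X)$, $(X,Y)$, $(Y,X)$, $(Y,Y)$ explicitly.

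The crux of part (1) is to identify $\mathsf{End}_{\mathcal{K}_{\Omega_1}}(\mu_2^-(\Omega_1))$ as some algebra $\Gamma$ whose own left mutation chain (in the order $1,2,1,\ldots$) is already known to be infinite. The cleanest outcome would be that $\Gamma \simeq \Omega_1^{\mathrm{op}}$, mirroring the self-dual pattern of Example~\ref{Omega-2}; if so, the oscillation $\Gamma,\Gamma^{\mathrm{op}},\Gamma,\ldots$ produces an infinite chain and the conclusion follows from Lemma~\ref{lemma-derived-infinite}, exactly as in the Kronecker case. Should the endomorphism algebra instead turn out to be a different derived-equivalent algebra, I would invoke Lemma~\ref{lemma-derived-infinite} with that $\Gamma$, reducing to the task of exhibiting an infinite mutation chain there. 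I expect this Hom-space computation and the correct reading of the multiplicative structure (the relations among the composition maps) to be the main technical obstacle, since $\beta^4=0$ makes $P_2$ longer than in the radical-square-zero examples and the approximation $f$ and the resulting quiver-with-relations of the endomorphism algebra are correspondingly more delicate.

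For part (2), minimality, I would first locate the socle of $\Omega_1$ and determine which one-dimensional ideals generated by socle elements give the proper quotients that need checking; by symmetry it should suffice to examine $\overline{\Omega}_1 := \Omega_1/\langle \beta^3\rangle = KQ_1/\langle\beta^3\rangle$ (killing the deepest radical layer) together with any quotient that kills $\mu\beta^3$. The algebra $KQ_1/\langle\beta^3\rangle$ has radical cube zero, so by Theorem~\ref{result-rad-cube-zero} its $\tau$-tilting finiteness can be read off directly: its Gabriel quiver is a proper subquiver of $Q(m,n)$ and it does not admit $\Delta_1$, $\Omega_3$, or $\Omega_3^{\mathrm{op}}$ as a quotient (there is only one loop and only the arrow $\mu$), whence it is $\tau$-tilting finite. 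For the remaining proper quotient I would either appeal to Lemma~\ref{lem-center} to pass to a smaller algebra or draw the (necessarily finite) Hasse quiver of $g$-vectors explicitly, as was done for $\overline{\Omega}_3$ in Proposition~\ref{Omega-3}. Combining: $\Omega_1$ is $\tau$-tilting infinite while each proper quotient is $\tau$-tilting finite, giving minimality. I anticipate part (2) to be routine once the socle structure is pinned down; the real work is the endomorphism-algebra identification in part (1).
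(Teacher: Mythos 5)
Your plan for part (1) is essentially the paper's argument: the paper computes $\mu_2^-(\Omega_1)=X\oplus Y$ with $X=(0\to P_1)$, $Y=\bigl(P_2\xrightarrow{f}P_1^{\oplus 4}\bigr)$, $f=\left(\begin{smallmatrix}\mu\\ \mu\beta\\ \mu\beta^2\\ \mu\beta^3\end{smallmatrix}\right)$, and identifies $\mathsf{End}_{\mathcal{K}_{\Omega_1}}(\mu_2^-(\Omega_1))\simeq\Omega_1^{\text{op}}$ --- precisely the ``cleanest outcome'' you anticipate. One caution on how you close the argument: with $\Gamma=\Omega_1^{\text{op}}$ you cannot simply invoke Lemma~\ref{lemma-derived-infinite}, since its hypothesis (that the chain starting with $\Omega_1^{\text{op}}$ in order $1,2,1,\ldots$ is infinite) is, by Lemma~\ref{lem-oppsite-algebra}, equivalent to the very statement you are proving; nor does the alternation $\Omega_1,\Omega_1^{\text{op}},\Omega_1,\ldots$ by itself preclude the chain terminating at $\Omega_1[1]$, whose endomorphism algebra is again $\Omega_1$. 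The paper closes this by tracking $g$-vectors via Lemma~\ref{lemma-g-vector}: $g(T_{2n})$ is governed by powers of $\left(\begin{smallmatrix}3&-1\\4&-1\end{smallmatrix}\right)$ and never reaches $(-1,0)\oplus(0,-1)$, so the chain is infinite by Proposition~\ref{prop-g-vector-injection}. Since you say ``exactly as in the Kronecker case'' you presumably intend this computation, but it is the step that actually does the work and must be made explicit.

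The genuine gap is in part (2). You assert that $\overline{\Omega}_1=KQ_1/\langle\beta^3\rangle$ has radical cube zero and propose to read off its finiteness from Theorem~\ref{result-rad-cube-zero}. This is false: only $\beta^3$ has been killed, so $\mu\beta^2$ is a nonzero path of length three and $\mathsf{rad}^3\,\overline{\Omega}_1\neq 0$ (indeed $P_1$ over $\overline{\Omega}_1$ has Loewy length $4$). Hence Theorem~\ref{result-rad-cube-zero} does not apply, and this branch of your minimality argument fails as written. The paper instead establishes the $\tau$-tilting finiteness of $\overline{\Omega}_1$ by exhibiting its full (finite) Hasse quiver of $g$-vectors, quoted from \cite[Lemma 3.3]{W-two-point}; some such direct computation is needed here. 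Your handling of the other socle quotient is correct and matches the paper: in $\Omega_1/\langle\mu\beta^3\rangle$ the element $\beta^3$ is central, so Lemma~\ref{lem-center} reduces that case to $\overline{\Omega}_1$ as well.
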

\begin{proof}
Let $P_1$ and $P_2$ be the indecomposable projective $\Omega_1$-modules. We have
\begin{center}
$P_1=$\scalebox{0.8}{$\vcenter{\xymatrix@C=0.1cm@R=0.2cm{
e_1\ar@{-}[d]\\
\mu\ar@{-}[d]\\
\mu\beta\ar@{-}[d]\\
\mu\beta^2\ar@{-}[d]\\
\mu\beta^3}}
\simeq \vcenter{\xymatrix@C=0.1cm@R=0.2cm{
1\ar@{-}[d]\\
2\ar@{-}[d]\\
2\ar@{-}[d]\\
2\ar@{-}[d]\\
2}}$} and $P_2=$\scalebox{0.8}{$\vcenter{\xymatrix@C=0.1cm@R=0.2cm{
e_2\ar@{-}[d]\\
\beta\ar@{-}[d]\\
\beta^2\ar@{-}[d]\\
\beta^3}}
\simeq \vcenter{\xymatrix@C=0.1cm@R=0.2cm{
2\ar@{-}[d]\\
2\ar@{-}[d]\\
2\ar@{-}[d]\\
2}}$}.
\end{center}
It is easy to check that
\begin{center}
$\begin{aligned}
\mathsf{Hom}_{\Omega_1}(P_1,P_1)&=1,\ \mathsf{Hom}_{\Omega_1}(P_1,P_2)=0,\\
\mathsf{Hom}_{\Omega_1}(P_2,P_2)&=\mathsf{span}\{1, \beta, \beta^2, \beta^3\},\\
\mathsf{Hom}_{\Omega_1}(P_2,P_1)&=\mathsf{span}\{\mu, \mu\beta, \mu\beta^2, \mu\beta^3 \}.
\end{aligned}$
\end{center}

If one takes left mutation starting with $\Omega_1$ in mutation order $1,2,1,2,1, \cdots$, one may find that the mutation chain will reach $\Omega_1[1]$ quickly, i.e.,
\begin{center}
$\mu_{1}^-(\Omega_1)=\left [\begin{smallmatrix}
P_1\longrightarrow 0\\
\oplus \\
0 \longrightarrow P_2
\end{smallmatrix}  \right ]$ and $\mu_2^-(\mu_{1}^-(\Omega_1))=\left [\begin{smallmatrix}
P_1\longrightarrow 0\\
\oplus \\
P_2 \longrightarrow 0
\end{smallmatrix}  \right ]$.
\end{center}
We consider the left mutation chain starting with $\Omega_1$ in mutation order $2,1,2,1,2,\cdots$. In this case, we find that any two-term silting complex is tilting since $\mathsf{Hom}_{\Omega_1}(P_1,P_2)=0$ (see Remark \ref{silt-is-tilt}). In the beginning, we have
\begin{center}
$\mu_{2}^-(\Omega_1)=\left [\begin{smallmatrix}
0\longrightarrow P_1\\
\oplus \\
P_2\overset{f}{\longrightarrow} P_1^{\oplus 4}
\end{smallmatrix}  \right ]$ with $f=\left ( \begin{smallmatrix}
\mu\\
\mu\beta\\
\mu\beta^2\\
\mu\beta^3
\end{smallmatrix} \right )$.
\end{center}
We calculate the left mutations starting with $\mu_{2}^-(\Omega_1)$ as follows.

(1) Let $X:=(0\longrightarrow P_1)$ and $Y:=(P_2\overset{f}{\longrightarrow} P_1^{\oplus 4})$. Then,
\begin{center}
$\mathsf{Hom}_{\mathcal{K}_{\Omega_1}}(Y,X)=0$, $\mathsf{Hom}_{\mathcal{K}_{\Omega_1}}(X,X)=\{(0,1)\}$,
\end{center}

\begin{center}
$\begin{aligned}\mathsf{Hom}_{\mathcal{K}_{\Omega_1}}(Y,Y)=\mathsf{span}
&\left\{\begin{matrix}
\left (1, \left ( \begin{smallmatrix}
1&0&0&0\\
0&1&0&0\\
0&0&1&0\\
0&0&0&1
\end{smallmatrix} \right )\right ), \left (\beta, \left ( \begin{smallmatrix}
0&1&0&0\\
0&0&1&0\\
0&0&0&1\\
0&0&0&0
\end{smallmatrix} \right )\right ),
\end{matrix}\right. \\
&\ \ \left.\begin{matrix}
\left (\beta^2, \left ( \begin{smallmatrix}
0&0&1&0\\
0&0&0&1\\
0&0&0&0\\
0&0&0&0
\end{smallmatrix} \right )\right ), \left (\beta^3, \left ( \begin{smallmatrix}
0&0&0&1\\
0&0&0&0\\
0&0&0&0\\
0&0&0&0
\end{smallmatrix} \right )\right )
\end{matrix}\right\},
\end{aligned}$
\end{center}

\begin{center}
$\mathsf{Hom}_{\mathcal{K}_{\Omega_1}}(X,Y)=\mathsf{span} \left \{\left (0, \left ( \begin{smallmatrix}
1\\
0\\
0\\
0
\end{smallmatrix} \right )\right ), \left (0, \left ( \begin{smallmatrix}
0\\
1\\
0\\
0
\end{smallmatrix} \right )\right ), \left (0, \left ( \begin{smallmatrix}
0\\
0\\
1\\
0
\end{smallmatrix} \right )\right ), \left (0, \left ( \begin{smallmatrix}
0\\
0\\
0\\
1
\end{smallmatrix} \right )\right )\right \}$.
\end{center}
Set
\begin{center}
$a:=\left (0, \left ( \begin{smallmatrix}
0\\
0\\
0\\
1
\end{smallmatrix} \right )\right )$ and $b:=\left (\beta, \left ( \begin{smallmatrix}
0&1&0&0\\
0&0&1&0\\
0&0&0&1\\
0&0&0&0
\end{smallmatrix} \right )\right )$.
\end{center}
Then,
\begin{center}
$b^2=\left (\beta^2, \left ( \begin{smallmatrix}
0&0&1&0\\
0&0&0&1\\
0&0&0&0\\
0&0&0&0
\end{smallmatrix} \right )\right )$, $b^3=\left (\beta^3, \left ( \begin{smallmatrix}
0&0&0&1\\
0&0&0&0\\
0&0&0&0\\
0&0&0&0
\end{smallmatrix} \right )\right )$, $b^4=0$,
\end{center}

\begin{center}
$ba=\left (0, \left ( \begin{smallmatrix}
0\\
0\\
1\\
0
\end{smallmatrix} \right )\right )$, $b^2a=\left (0, \left ( \begin{smallmatrix}
0\\
1\\
0\\
0
\end{smallmatrix} \right )\right )$, $b^3a=\left (0, \left ( \begin{smallmatrix}
1\\
0\\
0\\
0
\end{smallmatrix} \right )\right )$.
\end{center}
This implies that $\mathsf{End}_{\mathcal{K}_{\Omega_1}}(\mu_{2}^-(\Omega_1))$ is isomorphic to $KQ/I$ with
\begin{equation}\label{oppsite-omega-1}
Q:\xymatrix@C=1cm{1 & 2\ar[l]_{a}\ar@(ur,dr)^{b}}\ \text{and}\ I=\langle b^4\rangle,
\end{equation}
which is exactly the opposite algebra $\Omega_1^{\text{op}}$ of $\Omega_1$. By Lemma \ref{lemma-endomorphism-algebra}, we have
\begin{center}
$\mathsf{End}_{\mathcal{K}_{\Omega_1}}(\mu_1^-(\mu_{2}^-(\Omega_1)))\simeq
\mathsf{End}_{\mathcal{K}_{\Omega_1^{\text{op}}}}(\mu_{1}^-(\Omega_1^{\text{op}}))$.
\end{center}

(2) Let $R_1$ and $R_2$ be the indecomposable projective $\Omega_1^{\text{op}}$-modules. Then,
\begin{center}
$R_1=e_1\simeq 1$ and $R_2=$\scalebox{0.8}{$\vcenter{\xymatrix@C=0.1cm@R=0.2cm{
&e_2\ar@{-}[dl]\ar@{-}[dr]&&&\\
a&&b\ar@{-}[dl]\ar@{-}[dr]&&\\
&ba&&b^2\ar@{-}[dl]\ar@{-}[dr]&\\
&&b^2a&&b^3\ar@{-}[dl]\\
&&&b^3a& }}$} $\simeq$ \scalebox{0.7}{$\vcenter{\xymatrix@C=0.2cm@R=0.3cm{
&2\ar@{-}[dl]\ar@{-}[dr]&&&\\
1&&2\ar@{-}[dl]\ar@{-}[dr]&&\\
&1&&2\ar@{-}[dl]\ar@{-}[dr]&\\
&&1&&2\ar@{-}[dl]\\
&&&1& }}$},
\end{center}
where we use the symbols in (\ref{oppsite-omega-1}). One may easily check that
\begin{center}
$\mu_{1}^-(\Omega_1^{\text{op}})=\left [\begin{smallmatrix}
R_1\overset{a}{\longrightarrow} R_2\\
\oplus \\
0\longrightarrow R_2
\end{smallmatrix}\right ]$.
\end{center}
By Lemma \ref{lemma-g-vector}, we have
\begin{center}
$g(\mu_1^-(\mu_{2}^-(\Omega_1)))=\begin{smallmatrix}
(3,-1)\\
\oplus \\
(4,-1)
\end{smallmatrix}$.
\end{center}

(3) Let $X:=(R_1\overset{a}{\longrightarrow} R_2)$ and $Y:=(0\longrightarrow R_2)$. Then,
\begin{center}
$\mathsf{Hom}_{\mathcal{K}_{\Omega_1^{\text{op}}}}(X,Y)=0$, $\mathsf{Hom}_{\mathcal{K}_{\Omega_1^{\text{op}}}}(X,X)=\{(1,1)\}$,
\end{center}

\begin{center}
$\mathsf{Hom}_{\mathcal{K}_{\Omega_1^{\text{op}}}}(Y,Y)=\mathsf{span} \left \{(0,1), (0, b),(0, b^2),(0, b^3)\right \}$,
\end{center}

\begin{center}
$\mathsf{Hom}_{\mathcal{K}_{\Omega_1^{\text{op}}}}(Y,X)=\mathsf{span} \left \{(0,1), (0, b),(0, b^2),(0, b^3)\right \}$.
\end{center}
It is not difficult to find that $\mathsf{End}_{\mathcal{K}_{\Omega_1^{\text{op}}}}(\mu_{1}^-(\Omega_1^{\text{op}}))\simeq \Omega_1$. By Lemma \ref{lemma-endomorphism-algebra},
\begin{center}
$\begin{aligned}\mathsf{End}_{\mathcal{K}_{\Omega_1}}(\mu_2^-(\mu_1^-(\mu_{2}^-(\Omega_1))))
&\simeq \mathsf{End}_{\mathcal{K}_{\Omega_1^{\text{op}}}}(\mu_2^-(\mu_{1}^-(\Omega_1^{\text{op}})))\\
&\simeq \mathsf{End}_{\mathcal{K}_{\Omega_1}}(\mu_2^-(\Omega_1))\\
&\simeq \Omega_1^{\text{op}}
\end{aligned}$.
\end{center}
By Lemma \ref{lemma-g-vector} and the calculation of $g(\mu_{2}^-(\Omega_1))$, we find
\begin{center}
$g(\mu_2^-(\mu_1^-(\mu_{2}^-(\Omega_1))))=\begin{smallmatrix}
(3,-1)\\
\oplus \\
(8,-3)
\end{smallmatrix}$.
\end{center}

By repeating the above procedures (1), (2) and (3), we obtain a mutation chain
\begin{center}
$\Omega_1=:T_0\longrightarrow T_1\longrightarrow T_2\longrightarrow T_3\longrightarrow \cdots  \in \mathcal{H}(\mathsf{2\text{-}silt}\ \Omega_1)$,
\end{center}
where $T_i$ is always tilting, $\mathsf{End}_{\mathcal{K}_{\Omega_1}}(T_{2n})\simeq \Omega_1$ and $\mathsf{End}_{\mathcal{K}_{\Omega_1}}(T_{2n+1})\simeq \Omega_1^{\text{op}}$. Then, the chain of $g$-vectors of $T_i$ is given by
\begin{center}
$\begin{smallmatrix}
(1,0)\\
\oplus \\
(0,1)
\end{smallmatrix}\rightarrow \begin{smallmatrix}
(1,0)\\
\oplus \\
(4,-1)
\end{smallmatrix}\rightarrow \begin{smallmatrix}
(3,-1)\\
\oplus \\
(4,-1)
\end{smallmatrix} \rightarrow \begin{smallmatrix}
(3,-1)\\
\oplus \\
(8,-3)
\end{smallmatrix} \rightarrow \cdots\rightarrow g(T_{2n})\rightarrow g(T_{2n+1})\rightarrow \cdots$.
\end{center}
The $g$-vector $g(T_{2n})$ is computed by $\left ( \begin{smallmatrix}
3& -1 \\
4 & -1
\end{smallmatrix} \right )^n$, so that
\begin{center}
$g(T_{2n})=\begin{smallmatrix}
(2n+1, -n )\\
\oplus \\
(4n, -2n+1)
\end{smallmatrix}$ and $g(T_{2n+1})=\begin{smallmatrix}
(2n+1, -n )\\
\oplus \\
(4n+4, -2n-1)
\end{smallmatrix}$.
\end{center}
It is obvious that the chain of $g$-vectors cannot reach $(-1,0)\oplus (0,-1)$. This implies that $\mathcal{H}(\mathsf{2\text{-}silt}\ \Omega_1)$ contains an infinite chain and hence, $\Omega_1$ is $\tau$-tilting infinite.

Lastly, we show the minimality. We should consider the quotient of $\Omega_1$ modulo the ideal generated by $\mu\beta^3$, because the socle of $\Omega_1$ is $K\mu\beta^3\oplus K\beta^3$. But we notice that $\beta^3$ is included in the center of the quotient. By taking Lemma \ref{lem-center} into our consideration, it suffices to focus on
\begin{center}
$\overline{\Omega}_1:=\Omega_1/\langle \beta^3\rangle$.
\end{center}
It is shown by \cite[Lemma 3.3]{W-two-point} that the $g$-vectors for $\overline{\Omega}_1$ are
\begin{center}
\begin{tikzpicture}[shorten >=1pt, auto, node distance=0cm,
   node_style/.style={font=},
   edge_style/.style={draw=black}]
\node[node_style] (v) at (-2,0) {$\begin{smallmatrix}
(1,0)\\
\oplus \\
(0,1)
\end{smallmatrix}$};
\node[node_style] (v3) at (0,0) {$\begin{smallmatrix}
(1,0)\\
\oplus \\
(3,-1)
\end{smallmatrix}$};
\node[node_style] (v1) at (-2,-2) {$\begin{smallmatrix}
(-1,0)\\
\oplus \\
(0,1)
\end{smallmatrix}$};
\node[node_style] (v2) at (2,0) {$\begin{smallmatrix}
(2,-1)\\
\oplus \\
(3,-1)
\end{smallmatrix}$};
\node[node_style] (v21) at (4,0) {$\begin{smallmatrix}
(2,-1)\\
\oplus \\
(3,-2)
\end{smallmatrix}$};
\node[node_style] (v212) at (6,0) {$\begin{smallmatrix}
(1,-1)\\
\oplus \\
(3,-2)
\end{smallmatrix}$};
\node[node_style] (v2121) at (8,0) {$\begin{smallmatrix}
(1,-1)\\
\oplus \\
(0,-1)
\end{smallmatrix}$};
\node[node_style] (v0) at (8,-2) {$\begin{smallmatrix}
(-1,0)\\
\oplus \\
(0,-1)
\end{smallmatrix}$,};

\draw[->]  (v) edge node{ } (v1);
\draw[->]  (v1) edge node{ } (v0);
\draw[->]  (v) edge node{ } (v3);
\draw[->]  (v3) edge node{ } (v2);
\draw[->]  (v2) edge node{ } (v21);
\draw[->]  (v21) edge node{ } (v212);
\draw[->]  (v212) edge node{ } (v2121);
\draw[->]  (v2121) edge node{ } (v0);
\end{tikzpicture}
\end{center}
which implies that $\overline{\Omega}_1$ is $\tau$-tilting finite.
\end{proof}

\begin{theorem}\label{result-Q1}
Let $\Lambda=KQ_1/I$ be an algebra with an admissible ideal $I$. Then, $\Lambda$ is $\tau$-tilting finite if and only if it does not have $\Omega_1$ as a quotient.
\end{theorem}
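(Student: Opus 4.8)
The plan is to handle the easy implication directly and to obtain the hard one by reducing every $KQ_1/I$ to a small list of monomial normal forms. For the \emph{only if} direction, suppose $\Omega_1$ is a quotient of $\Lambda$. Since $\Omega_1$ is $\tau$-tilting infinite by Proposition \ref{Omega-1}, Lemma \ref{lem-quotient-and-idempotent}(1) forces $\Lambda$ to be $\tau$-tilting infinite; contrapositively, $\tau$-tilting finiteness of $\Lambda$ excludes an $\Omega_1$-quotient. This is immediate and requires no computation.

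The substance lies in the converse, and the first step is to pin down the shape of $\Lambda=KQ_1/I$ for an \emph{arbitrary} admissible ideal $I$. Decomposing along the idempotents $e_1,e_2$, one has $e_1\Lambda e_1=Ke_1$ and $e_2\Lambda e_1=0$, while $e_2KQ_1e_2=K[\beta]$ and $e_1KQ_1e_2=\mu K[\beta]$. Because $I$ is a two-sided ideal, $I\cap K[\beta]$ is an ideal of the PID $K[\beta]$ containing some $\beta^N$ (admissibility), hence equals $(\beta^m)$ with $m\geq 2$; similarly $I\cap\mu K[\beta]$ is a $K[\beta]$-submodule of the free module $\mu K[\beta]$ containing $\mu\beta^m$, hence equals $\mu(\beta^j)$ for some $1\leq j\leq m$. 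Thus $I=\langle\beta^m,\mu\beta^j\rangle$ and $\Lambda$ is isomorphic to the monomial algebra $\Lambda(m,j):=KQ_1/\langle\beta^m,\mu\beta^j\rangle$. The point worth stressing is that even a non-monomial admissible $I$ collapses to this normal form: any relation in $\mu K[\beta]$ of lowest degree $j$ cuts out exactly the submodule $\mu(\beta^j)$, because its leading factor is a unit of $K[\beta]/(\beta^m)$ and can be absorbed into a change of generator.

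With the normal form in hand I would record two facts. First, $\Omega_1=KQ_1/\langle\beta^4\rangle=\Lambda(4,4)$, and $\Lambda(m,j)\twoheadrightarrow\Omega_1$ holds exactly when $\langle\beta^m,\mu\beta^j\rangle\subseteq\langle\beta^4\rangle$, i.e.\ $m\geq 4$ and $j\geq 4$; since $j\leq m$ this is simply $j\geq 4$, so \emph{no $\Omega_1$-quotient} is equivalent to $j\leq 3$. Second, $\beta^j$ is central in $\Lambda(m,j)$ (one has $\mu\beta^j=0$ and $\beta^j\mu=0$), so Lemma \ref{lem-center} gives a poset isomorphism $\mathsf{2\text{-}silt}\,\Lambda(m,j)\cong\mathsf{2\text{-}silt}\,(\Lambda(m,j)/\langle\beta^j\rangle)=\mathsf{2\text{-}silt}\,\Lambda(j,j)$, where $\Lambda(j,j)=KQ_1/\langle\beta^j\rangle$. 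Hence $\tau$-tilting finiteness of $\Lambda$ depends only on $j$. For $j\leq 3$ the algebra $\Lambda(j,j)$ is a quotient of (or equal to) $\overline{\Omega}_1=\Lambda(3,3)$, which is already shown $\tau$-tilting finite in Proposition \ref{Omega-1}; so Lemma \ref{lem-quotient-and-idempotent}(1) yields that $\Lambda(j,j)$, and therefore $\Lambda$, is $\tau$-tilting finite whenever $j\leq 3$. Combining the two facts, $\Lambda$ is $\tau$-tilting finite if and only if $j\leq 3$, if and only if it has no $\Omega_1$-quotient.

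The only delicate step is the classification in the second paragraph: one must check that a general admissible (a priori non-monomial) $I$ genuinely reduces to $\langle\beta^m,\mu\beta^j\rangle$, and that the relation ``$\Omega_1$ is a quotient'' can be read off combinatorially as $j\geq 4$ rather than through some exotic surjection. Both rest on the rigidity of $Q_1$ — a single loop at one vertex, a single arrow, and no returning paths — which makes $e_2\Lambda e_2$ a truncated polynomial ring and $e_1\Lambda e_2$ a cyclic module over it. Once this structural observation is in place, the remainder is a routine orchestration of Lemma \ref{lem-center}, Lemma \ref{lem-quotient-and-idempotent} and the explicit finiteness of $\overline{\Omega}_1$ from Proposition \ref{Omega-1}.
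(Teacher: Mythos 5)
Your proof is correct and follows essentially the same route as the paper: reduce to the case where either $\Lambda$ is a quotient of $\overline{\Omega}_1$ or a suitable power of $\beta$ is central, then invoke Lemma \ref{lem-center} together with the $\tau$-tilting finiteness of $\overline{\Omega}_1$ and Lemma \ref{lem-quotient-and-idempotent}. Your explicit normal form $\Lambda(m,j)=KQ_1/\langle\beta^m,\mu\beta^j\rangle$ for an arbitrary admissible ideal is a slightly more careful version of the paper's argument via $r=\min\{i\mid\beta^i\in I\}$ and the dichotomy on $\mu\beta^3\in I$, but the substance is identical.
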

\begin{proof}
If $\Lambda$ has $\Omega_1$ as a quotient algebra, then $\Lambda$ is $\tau$-tilting infinite by Lemma \ref{lem-quotient-and-idempotent} (1). If $\Lambda$ does not have $\Omega_1$ as a quotient, we show that either $\Lambda$ is a quotient of $\overline{\Omega}_1$ or $\beta^3$ is a central element of $\Lambda$. In the latter case, $\Lambda/\langle \beta^3\rangle$ is a quotient of $\overline{\Omega}_1$.

Let $r=\text{min}\{i\in \mathbb{N}\mid \beta^i\in I\}$. The indecomposable projective modules of $\Lambda$ are
\begin{center}
$P_1=$\scalebox{0.8}{$\vcenter{\xymatrix@C=0.1cm@R=0.2cm{
e_1\ar@{-}[d]\\
\mu\ar@{-}[d]\\
\mu\beta\ar@{-}[d]\\
\vdots\\
\mu\beta^{r-1}}}$} and $P_2=$\scalebox{0.8}{$\vcenter{\xymatrix@C=0.1cm@R=0.2cm{
e_2\ar@{-}[d]\\
\beta\ar@{-}[d]\\
\vdots\\
\beta^{r-1}}}$}.
\end{center}
If $r=2$ or $3$, then $\beta^3\in I$ and $\Lambda$ is a quotient of $\overline{\Omega}_1$. Suppose that $r\geqslant 4$. If $\mu\beta^3\not\in I$, then $\Omega_1$ is a quotient of $\Lambda$, a contradiction. So we must have $\mu\beta^3\in I$. In this case, $\beta^3$ is a central element of $\Lambda$ and $\Lambda/\langle \beta^3\rangle\simeq \overline{\Omega}_1$ if $\mu\beta^2\notin I$, $\Lambda/\langle \beta^3\rangle$ is a quotient of $\overline{\Omega}_1$ if $\mu\beta^2\in I$. Therefore, $\Lambda$ is $\tau$-tilting finite if it does not have $\Omega_1$ as a quotient.
\end{proof}

Recall that $\Omega_2=KQ_2/\langle\alpha^2, \beta^2\rangle$ with
\begin{center}
$Q_2:\xymatrix@C=1cm{1 \ar[r]^{\mu}\ar@(dl,ul)^{\alpha}& 2 \ar@(ur,dr)^{\beta}}$.
\end{center}
It is seen from \cite[Lemma 3.4]{W-two-point} that $\Omega_2$ is minimal $\tau$-tilting infinite.

\begin{proposition}\label{Delta-3}
Let $\Delta_3:=KQ_2/\langle \alpha^3, \beta^3, \alpha\mu\beta \rangle$. Then, $\Delta_3$ is $\tau$-tilting finite.
\end{proposition}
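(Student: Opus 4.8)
The plan is to use the special shape of the Hasse quiver for a two-point algebra recorded in $(\ref{the-form-of-Hasse-quiver})$: since $\Delta_3$ has two simple modules, $\mathcal{H}(\mathsf{2\text{-}silt}\ \Delta_3)$ is the union of the two mutation chains joining $\Delta_3$ to $\Delta_3[1]$, one in order $1,2,1,\dots$ and one in order $2,1,2,\dots$. It therefore suffices to prove that both chains are finite; then $(\ref{the-form-of-Hasse-quiver})$ is a finite connected component and $\Delta_3$ is $\tau$-tilting finite by \cite[Corollary 2.38]{AIR}. The first observation is that $Q_2$ has no path $2\to 1$, so $\mathsf{Hom}_{\Delta_3}(P_1,P_2)=0$. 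By Remark \ref{silt-is-tilt} this does two things at once. On one hand the chain in order $1,2,1,\dots$ collapses after two steps, since $\mu_1^-(\Delta_3)=(P_1\to 0)\oplus(0\to P_2)$ and then $\mu_2^-(\mu_1^-(\Delta_3))=\Delta_3[1]$. On the other hand every complex occurring in the order $2,1,2,\dots$ is tilting, which is precisely what allows the endomorphism-algebra bookkeeping of Lemmas \ref{lemma-endomorphism-algebra} and \ref{lemma-g-vector} to be applied along that chain.

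The core of the argument is thus the chain in order $2,1,2,\dots$. I would begin by computing $\mu_2^-(\Delta_3)=X\oplus Y$ with $X=(0\to P_1)$ and $Y=\mathsf{cone}\,(P_2\xrightarrow{f}P_1^{\oplus 3})$, where $f=(\mu,\mu\beta,\mu\beta^2)^{T}$ is the minimal left $\mathsf{add}(P_1)$-approximation of $P_2$ coming from the three generators of $\mathsf{Hom}_{\Delta_3}(P_2,P_1)=e_1\Delta_3 e_2$ over $\mathsf{End}_{\Delta_3}(P_1)=e_1\Delta_3 e_1$; this gives $g(Y)=3(1,0)-(0,1)=(3,-1)$. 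I would then iterate: at each stage compute the minimal left approximation of the summand being mutated, read off the multiplicity $k$ entering the recursion $g(\text{new})=k\,g(\text{kept})-g(\text{old})$ by Lemma \ref{lemma-g-vector}, and track the resulting endomorphism algebra by Lemma \ref{lemma-endomorphism-algebra}. The key organizing device is the self-duality $\Delta_3\cong\Delta_3^{\text{op}}$ obtained by interchanging the two vertices together with $\alpha\leftrightarrow\beta$ and $\mu\mapsto\mu^{\text{op}}$ --- the defining relation $\alpha\mu\beta$ is symmetric under swapping the source loop and the target loop. Through Lemma \ref{lem-symmetry} this yields an anti-automorphism $S_\sigma$ of $\mathsf{2\text{-}silt}\ \Delta_3$ with $g(S_\sigma(T))=-(c_2,c_1)$ for $g(T)=(c_1,c_2)$. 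Since $S_\sigma$ fixes the interior vertex $\mu_1^-(\Delta_3)$ of the short chain, it maps the short chain to itself and hence the long chain to itself in reverse; so it is enough to compute the order-$2,1,2,\dots$ chain up to its $S_\sigma$-symmetric midpoint and then reflect, which halves the bookkeeping and pins down where the chain closes.

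The hard part will be showing that the $g$-vectors along the order-$2,1,2,\dots$ chain actually turn around and reach $(-1,0)\oplus(0,-1)$ rather than growing without bound; the self-duality alone does not force this, as the Kronecker algebra $\Delta_1\cong\Delta_1^{\text{op}}$ of Example \ref{example-kronecker-alg} is self-dual and yet $\tau$-tilting infinite. I expect the termination to be governed by the relation $\alpha\mu\beta=0$, which stops the loops $\alpha$ and $\beta$ from compounding through $\mu$: concretely $\mathsf{End}_{\Delta_3}(P_1)=\langle e_1,\alpha,\alpha^2\rangle$ annihilates the $\beta$-tail $\{\mu\beta,\mu\beta^2\}$ of $e_1\Delta_3 e_2$, so the approximation multiplicities stay bounded and the $g$-vectors eventually come back, in sharp contrast with $\Omega_2$ of Example \ref{Omega-2}, where $\alpha\mu\beta\neq 0$ produces the unbounded chain. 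The technical obstacle is therefore the explicit computation of the several approximations and endomorphism rings over $\Delta_3$, whose three-dimensional $\mathsf{End}_{\Delta_3}(P_1)$ makes each step heavier than the radical-cube-zero cases of Theorem \ref{result-rad-cube-zero}; a possible shortcut is to identify one endomorphism algebra occurring along the chain as an already known $\tau$-tilting finite algebra and then invoke the interval isomorphism in the proof of Lemma \ref{lemma-derived-infinite} to finish. Once the finitely many $g$-vectors have been listed, seen to be $S_\sigma$-symmetric and to terminate at $\Delta_3[1]$, Propositions \ref{prop-g-vector-injection} and \ref{prop-exactly-two} guarantee that no silting complex has been overlooked, completing the proof that $\mathsf{2\text{-}silt}\ \Delta_3$ is finite.
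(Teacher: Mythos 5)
Your structural skeleton is sound and, in its second half, coincides with the paper's: the short mutation chain collapses in two steps because $\mathsf{Hom}_{\Delta_3}(P_1,P_2)=0$, the complexes along the long chain are tilting by Remark \ref{silt-is-tilt}, the vertex-swapping isomorphism $\sigma\colon\Delta_3^{\text{op}}\to\Delta_3$ gives via Lemma \ref{lem-symmetry} an anti-automorphism $S_\sigma$ with $g(S_\sigma(T))=-(c_2,c_1)$, and Proposition \ref{prop-exactly-two} is the right tool to certify that nothing is missed. Your first computation is also correct: the minimal left $\mathsf{add}(P_1)$-approximation of $P_2$ is $(\mu,\mu\beta,\mu\beta^2)^{T}\colon P_2\to P_1^{\oplus 3}$, because $\alpha\mu\beta=0$ kills the action of $\mathsf{End}_{\Delta_3}(P_1)$ on $\mu\beta$ and $\mu\beta^2$, so $g=(3,-1)$, matching the paper.

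The gap is that the decisive step is announced but not performed. You write that you \emph{would} iterate the mutations, that the ``hard part will be showing that the $g$-vectors \dots actually turn around,'' and that you \emph{expect} the relation $\alpha\mu\beta=0$ to force termination --- but expectation is not a proof, and as you yourself note, self-duality alone cannot force it (the Kronecker algebra is self-dual and infinite). Moreover, your plan to ``compute up to the $S_\sigma$-symmetric midpoint and then reflect'' presupposes that a midpoint exists, i.e.\ that the chain is finite; if the chain were infinite, $S_\sigma$ would simply carry the descending chain from $\Delta_3$ to an ascending chain from $\Delta_3[1]$ without the two ever meeting. What actually closes the argument in the paper is a concrete gluing: the descending chain $\mathsf{T}$ ends in a silting complex containing the indecomposable summand with $g$-vector $(1,-1)$, the reflected chain $S_\sigma(\mathsf{T})$ begins with a silting complex containing that same summand, and Proposition \ref{prop-exactly-two} then forces a single arrow $(1,-1)\oplus(3,-2)\to(1,-1)\oplus(2,-3)$ joining them. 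Producing $\mathsf{T}$ explicitly is the content you have deferred; the paper sidesteps the heavy endomorphism-ring computations by importing the presilting complexes $(P_2\xrightarrow{\mu}P_1)$, $(P_2\xrightarrow{f_2}P_1^{\oplus2})$, $(P_2\xrightarrow{f_1}P_1^{\oplus3})$, $(P_2^{\oplus2}\xrightarrow{f_3}P_1^{\oplus3})$ from the known chain over $\overline{\Omega}_1=KQ_2/\langle\alpha,\beta^3\rangle$ in \cite[Lemma 3.3]{W-two-point} and checking they remain presilting over $\Delta_3$. Without either that import or the direct computation, your argument does not yet establish finiteness.
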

\begin{proof}
We determine the $g$-vectors for $\Delta_3$ by using the calculation of $\mathcal{H}(\mathsf{2\text{-}silt}\ \overline{\Omega}_1)$ and Lemma \ref{lem-symmetry}. Recall that
\begin{center}
$\overline{\Omega}_1=\Omega_1/\langle \beta^3\rangle(\simeq KQ_2/\langle \alpha, \beta^3\rangle)$.
\end{center}
Let $R_1$ and $R_2$ be the indecomposable projective $\overline{\Omega}_1$-modules. Then,
\begin{center}
$R_1=$\scalebox{0.8}{$\vcenter{\xymatrix@C=0.1cm@R=0.2cm{
e_1\ar@{-}[d]\\
\mu\ar@{-}[d]\\
\mu\beta\ar@{-}[d]\\
\mu\beta^2}}
\simeq \vcenter{\xymatrix@C=0.1cm@R=0.2cm{
1\ar@{-}[d]\\
2\ar@{-}[d]\\
2\ar@{-}[d]\\
2}}$} and $R_2=$\scalebox{0.8}{$\vcenter{\xymatrix@C=0.1cm@R=0.2cm{
e_2\ar@{-}[d]\\
\beta\ar@{-}[d]\\
\beta^2}}
\simeq \vcenter{\xymatrix@C=0.1cm@R=0.2cm{
2\ar@{-}[d]\\
2\ar@{-}[d]\\
2}}$}.
\end{center}
It is shown in \cite[Lemma 3.3]{W-two-point} that we have the following chain in $\mathcal{H}(\mathsf{2\text{-}silt}\ \overline{\Omega}_1)$,
\begin{center}
\begin{tikzpicture}[shorten >=1pt, auto, node distance=0cm,
   node_style/.style={font=},
   edge_style/.style={draw=black}]
\node[node_style] (v0) at (-0.5,0) {$\left [\begin{smallmatrix}
0\longrightarrow R_1\\
\oplus \\
0\longrightarrow R_2
\end{smallmatrix}  \right ]$};
\node[node_style] (v2) at (2,0) {$\left [\begin{smallmatrix}
0\longrightarrow R_1\\
\oplus \\
R_2\overset{f_1}{\longrightarrow} R_1^{\oplus 3}
\end{smallmatrix}  \right ]$};
\node[node_style] (v21) at (4.8,0) {$\left [\begin{smallmatrix}
R_2\overset{f_2}{\longrightarrow} R_1^{\oplus 2}\\
\oplus \\
R_2\overset{f_1}{\longrightarrow} R_1^{\oplus 3}
\end{smallmatrix}  \right ]$};
\node[node_style] (v212) at (7.7,0) {$\left [\begin{smallmatrix}
R_2\overset{f_2}{\longrightarrow} R_1^{\oplus 2}\\
\oplus \\
R_2^{\oplus 2}\overset{f_3}{\longrightarrow} R_1^{\oplus 3}
\end{smallmatrix}  \right ]$};
\node[node_style] (v2121) at (10.7,0) {$\left [\begin{smallmatrix}
R_2\overset{\mu}{\longrightarrow} R_1\\
\oplus \\
R_2^{\oplus 2}\overset{f_3}{\longrightarrow} R_1^{\oplus 3}
\end{smallmatrix}  \right ]$};

\draw[->]  (v0) edge node{\ } (v2);
\draw[->]  (v2) edge node{\ } (v21);
\draw[->]  (v21) edge node{\ } (v212);
\draw[->]  (v212) edge node{\ } (v2121);
\end{tikzpicture}
\end{center}
where
\begin{center}
$f_1=\left ( \begin{smallmatrix}
\mu\\
\mu\beta\\
\mu\beta^2
\end{smallmatrix} \right )$, $f_2=\left ( \begin{smallmatrix}
\mu\\
\mu\beta
\end{smallmatrix} \right )$, $f_3=\left ( \begin{smallmatrix}
\mu&0\\
-\mu\beta &\mu\\
0&\mu\beta
\end{smallmatrix} \right )$.
\end{center}

Second, the indecomposable projective $\Delta_3$-modules are
\begin{center}
$P_1=$\scalebox{0.8}{$\vcenter{\xymatrix@C=0.1cm@R=0.2cm{
&e_1\ar@{-}[dl]\ar@{-}[dr]&&&\\
\mu\ar@{-}[d]&&\alpha\ar@{-}[dl]\ar@{-}[dr]&&\\
\mu\beta\ar@{-}[d]&\alpha\mu&&\alpha^2\ar@{-}[dl]&\\
\mu\beta^2&&\alpha^2\mu&& }}$} $\simeq$ \scalebox{0.7}{$\vcenter{\xymatrix@C=0.2cm@R=0.2cm{
&1\ar@{-}[dl]\ar@{-}[dr]&&&\\
2\ar@{-}[d]&&1\ar@{-}[dl]\ar@{-}[dr]&&\\
2\ar@{-}[d]&2&&1\ar@{-}[dl]&\\
2&&2&& }}$} and $P_2=$\scalebox{0.8}{$\vcenter{\xymatrix@C=0.1cm@R=0.2cm{
e_2\ar@{-}[d]\\
\beta\ar@{-}[d]\\
\beta^2}}
\simeq \vcenter{\xymatrix@C=0.1cm@R=0.2cm{
2\ar@{-}[d]\\
2\ar@{-}[d]\\
2}}$}.
\end{center}
Comparing with the two-term silting complexes in  $\mathcal{K}_{\overline{\Omega}_1}$, one finds that
\begin{center}
$(P_2\overset{\mu}{\longrightarrow} P_1)$, $(P_2\overset{f_1}{\longrightarrow} P_1^{\oplus 3})$, $(P_2\overset{f_2}{\longrightarrow} P_1^{\oplus 2})$, $(P_2^{\oplus 2}\overset{f_3}{\longrightarrow} P_1^{\oplus 3})$,
\end{center}
are two-term presilting complexes in $\mathcal{K}_{\Delta_3}$. 
This gives us all the necessary data to determine the $g$-vectors for $\Delta_3$.
We obtain a left mutation chain $\mathsf{T}$ in $\mathcal{H}(\mathsf{2\text{-}silt}\ \Delta_3)$ whose $g$-vectors are displayed as
\begin{center}
$\begin{smallmatrix}
(1,0)\\
\oplus \\
(0,1)
\end{smallmatrix}\longrightarrow \begin{smallmatrix}
(1,0)\\
\oplus \\
(3,-1)
\end{smallmatrix}\longrightarrow \begin{smallmatrix}
(2,-1)\\
\oplus \\
(3,-1)
\end{smallmatrix} \longrightarrow \begin{smallmatrix}
(2,-1)\\
\oplus \\
(3,-2)
\end{smallmatrix} \longrightarrow \begin{smallmatrix}
(1,-1)\\
\oplus \\
(3,-2)
\end{smallmatrix}$.
\end{center}

Third, we observe that there exists an algebra isomorphism $\sigma:\Delta_3^\text{op}\rightarrow \Delta_3$ satisfying
\begin{center}
$\sigma(e_1^\ast)=e_2$ and $\sigma(e_2^\ast)=e_1$.
\end{center}
According to the anti-automorphism $S_\sigma$ introduced in Lemma \ref{lem-symmetry}, there is another left mutation chain $S_\sigma(\mathsf{T})$ in $\mathcal{H}(\mathsf{2\text{-}silt}\ \Delta_3)$ whose $g$-vectors are displayed as
\begin{center}
$\begin{smallmatrix}
(1,-1)\\
\oplus \\
(2,-3)
\end{smallmatrix}\longrightarrow \begin{smallmatrix}
(1,-2)\\
\oplus \\
(2,-3)
\end{smallmatrix}\longrightarrow \begin{smallmatrix}
(1,-2)\\
\oplus \\
(1,-3)
\end{smallmatrix} \longrightarrow \begin{smallmatrix}
(0,-1)\\
\oplus \\
(1,-3)
\end{smallmatrix} \longrightarrow \begin{smallmatrix}
(0,-1)\\
\oplus \\
(-1,0)
\end{smallmatrix}$.
\end{center}
Since each indecomposable two-term presilting complex in $\mathcal{K}_{\Delta_3}$ is a direct summand of exactly two two-term silting
complexes in $\mathsf{2\text{-}silt}\ \Delta_3$, we must have an arrow in $\mathcal{H}(\mathsf{2\text{-}silt}\ \Delta_3)$ which is determined by
\begin{center}
$\begin{smallmatrix}
(1,-1)\\
\oplus \\
(3,-2)
\end{smallmatrix}\longrightarrow \begin{smallmatrix}
(1,-1)\\
\oplus \\
(2,-3)
\end{smallmatrix}$.
\end{center}

Finally, we conclude that the $g$-vectors for $\Delta_3$ are
\begin{center}
\begin{tikzpicture}[shorten >=1pt, auto, node distance=0cm,
   node_style/.style={font=},
   edge_style/.style={draw=black}]
\node[node_style] (v0) at (0,0) {$\begin{smallmatrix}
(1,0)\\
\oplus \\
(0,1)
\end{smallmatrix}$};
\node[node_style] (v1) at (12,0) {$\begin{smallmatrix}
(-1,0)\\
\oplus \\
(0,1)
\end{smallmatrix}$};
\node[node_style] (v) at (12,-6) {$\begin{smallmatrix}
(-1,0)\\
\oplus \\
(0,-1)
\end{smallmatrix}$};
\node[node_style] (v2) at (0,-2) {$\begin{smallmatrix}
(1,0)\\
\oplus \\
(3,-1)
\end{smallmatrix}$};
\node[node_style] (v21) at (0,-4) {$\begin{smallmatrix}
(2,-1)\\
\oplus \\
(3,-1)
\end{smallmatrix}$};
\node[node_style] (v212) at (0,-6) {$\begin{smallmatrix}
(2,-1)\\
\oplus \\
(3,-2)
\end{smallmatrix}$};
\node[node_style] (v2121) at (2,-6) {$\begin{smallmatrix}
(1,-1)\\
\oplus \\
(3,-2)
\end{smallmatrix}$};

\node[node_style] (v21212) at (4,-6) {$\begin{smallmatrix}
(1,-1)\\
\oplus \\
(2,-3)
\end{smallmatrix}$};
\node[node_style] (v212121) at (6,-6) {$\begin{smallmatrix}
(1,-2)\\
\oplus \\
(2,-3)
\end{smallmatrix}$};
\node[node_style] (v2121212) at (8,-6) {$\begin{smallmatrix}
(1,-2)\\
\oplus \\
(1,-3)
\end{smallmatrix}$};
\node[node_style] (v21212121) at (10,-6) {$\begin{smallmatrix}
(0,-1)\\
\oplus \\
(1,-3)
\end{smallmatrix}$};
\draw[->]  (v0) edge node{\ } (v1);
\draw[->]  (v1) edge node{\ } (v);
\draw[->]  (v0) edge node{\ } (v2);
\draw[->]  (v2) edge node{\ } (v21);
\draw[->]  (v21) edge node{\ } (v212);
\draw[->]  (v212) edge node{\ } (v2121);
\draw[->]  (v2121) edge node{\ } (v21212);
\draw[->]  (v21212) edge node{\ } (v212121);
\draw[->]  (v212121) edge node{\ } (v2121212);
\draw[->]  (v2121212) edge node{\ } (v21212121);
\draw[->]  (v21212121) edge node{\ } (v);
\end{tikzpicture}.
\end{center}
This implies that $\Delta_3$ is $\tau$-tilting finite.
\end{proof}

\begin{theorem}\label{result-Q2}
Let $\Lambda=KQ_2/I$ be a monomial algebra. Then, $\Lambda$ is $\tau$-tilting finite if and only if it does not have one of $\Omega_1$, $\Omega_1^{\emph{op}}$ and $\Omega_2$ as a quotient algebra.
\end{theorem}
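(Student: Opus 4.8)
The plan is to prove the two implications separately, the forward one being immediate and the reverse one resting on a reduction to the $\tau$-tilting finite algebra $\Delta_3$ of Proposition \ref{Delta-3}. For the ``only if'' direction I would note that each of the three listed algebras is $\tau$-tilting infinite: $\Omega_1$ by Proposition \ref{Omega-1}, $\Omega_1^{\text{op}}$ by Lemma \ref{lem-oppsite-algebra} applied to $\Omega_1$, and $\Omega_2$ by Example \ref{Omega-2}. Hence if $\Lambda$ admits any of them as a quotient, Lemma \ref{lem-quotient-and-idempotent}(1) forces $\Lambda$ to be $\tau$-tilting infinite.

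For the converse, the first step is to make the three hypotheses explicit. Since every path in $KQ_2$ is of the form $\alpha^i$, $\beta^j$ or $\alpha^i\mu\beta^j$, one checks the identifications $\Omega_2\cong KQ_2/\langle\alpha^2,\beta^2\rangle$, $\Omega_1\cong KQ_2/\langle\alpha,\beta^4\rangle$ and $\Omega_1^{\text{op}}\cong KQ_2/\langle\beta,\alpha^4\rangle$ (for the last one the loop-with-outgoing-arrow pattern of $\Omega_1^{\text{op}}$ is matched to $\alpha$ and $\mu$, with the two vertices swapped). Having one of these as a quotient of $\Lambda=KQ_2/I$ is then equivalent to $I\subseteq\langle\alpha^2,\beta^2\rangle$, $I\subseteq\langle\alpha,\beta^4\rangle$ or $I\subseteq\langle\beta,\alpha^4\rangle$ respectively. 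Enumerating the monomials of length at least two lying outside each ideal, the three hypotheses read: $I$ contains one of $\alpha\mu,\mu\beta,\alpha\mu\beta$; one of $\beta^2,\beta^3,\mu\beta,\mu\beta^2,\mu\beta^3$; and one of $\alpha^2,\alpha^3,\alpha\mu,\alpha^2\mu,\alpha^3\mu$. Because $\alpha\mu\beta=(\alpha\mu)\beta=\alpha(\mu\beta)$, the first of these already forces $\alpha\mu\beta\in I$.

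The second step is to arrange, after a central reduction, that $\alpha^3,\beta^3\in I$ as well. If $\beta^2$ or $\beta^3$ lies in $I$ we are done on the $\beta$-side; otherwise the second hypothesis forces $\mu\beta^3\in I$, and then $\beta^3$ is central, since $\mu\beta^3=\beta^3\mu=0$, $\alpha\beta^3=\beta^3\alpha=0$, and $\beta^3\beta=\beta\beta^3$. Symmetrically, either $\alpha^3\in I$ or the third hypothesis forces $\alpha^3\mu\in I$, making $\alpha^3$ central by the same computation. Quotienting by the ideal generated by whichever of $\alpha^3,\beta^3$ is needed, Lemma \ref{lem-center} preserves $\mathsf{2\text{-}silt}$, and in the resulting algebra $\alpha^3=\beta^3=\alpha\mu\beta=0$. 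Thus that algebra is a quotient of $\Delta_3=KQ_2/\langle\alpha^3,\beta^3,\alpha\mu\beta\rangle$, which is $\tau$-tilting finite by Proposition \ref{Delta-3}; applying Lemma \ref{lem-quotient-and-idempotent}(1) and Lemma \ref{lem-center} we conclude that $\Lambda$ is $\tau$-tilting finite.

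The step I expect to be the main obstacle is the translation in the second paragraph: pinning down the isomorphism $\Omega_1^{\text{op}}\cong KQ_2/\langle\beta,\alpha^4\rangle$ correctly despite the arrow of $Q_2$ running the ``wrong'' way, and enumerating exactly the short monomials that avoid each ideal, together with the centrality checks that license the use of Lemma \ref{lem-center}. Once these combinatorial points are settled, the reduction to $\Delta_3$ is routine.
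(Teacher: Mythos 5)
Your proposal is correct and follows essentially the same route as the paper: derive $\alpha\mu\beta,\ \alpha^3\mu,\ \mu\beta^3\in I$ from the three non-quotient hypotheses, observe that $\alpha^3$ and $\beta^3$ are then central so that Lemma \ref{lem-center} reduces $\mathsf{2\text{-}silt}\ \Lambda$ to that of a quotient of $\Delta_3$, and invoke Proposition \ref{Delta-3}. Your version merely spells out the monomial enumeration and the centrality checks that the paper leaves implicit.
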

\begin{proof}
If $\Lambda$ has one of $\Omega_1$, $\Omega_1^{\text{op}}$ and $\Omega_2$ as a quotient, then $\Lambda$ is obviously $\tau$-tilting infinite. If $\Lambda$ does not have one of $\Omega_1$, $\Omega_1^{\text{op}}$ and $\Omega_2$ as a quotient, we must have $\alpha^3\mu, \mu\beta^3, \alpha\mu\beta \in I$ such that $\alpha^3$ and $\beta^3$ are included in the center of $\Lambda$. By considering Lemma \ref{lem-center}, we deduce that either $\Lambda$ is a quotient of $\Delta_3$ or $\Lambda/\langle \alpha^3, \beta^3\rangle$ is a quotient of $\Delta_3$.
\end{proof}

\section{Extensions}

\subsection{Some cases with radical cube not zero}
\ \\
\vspace{-0.3cm}

We provide some piecemeal examples in this subsection, implying that our strategy is applicable in a more general setting.

\begin{proposition}\label{Omega-4}
Let $\Omega_4:=KQ_3/\langle\beta_1^3,\beta_2^2,\beta_2\beta_1,\mu\beta_2, \beta_1^2\beta_2\rangle$ with
\begin{center}
$Q_3: \xymatrix@C=1cm{1\ar[r]^{\mu}& 2 \ar@(ul,ur)^{\beta_1}\ar@(dl,dr)_{\beta_2}}$.
\end{center}
Then, $\Omega_4$ is minimal $\tau$-tilting infinite.
\end{proposition}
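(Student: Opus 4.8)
The plan is to follow the same two-part template used for $\Omega_1$ and $\Omega_3$: first establish $\tau$-tilting infiniteness via an infinite left-mutation chain, then verify minimality by checking that every proper quotient is $\tau$-tilting finite. For the infiniteness, I would compute the indecomposable projectives $P_1, P_2$ of $\Omega_4$ explicitly from the relations $\langle\beta_1^3,\beta_2^2,\beta_2\beta_1,\mu\beta_2,\beta_1^2\beta_2\rangle$; the surviving paths give the radical layers, and one checks $\mathsf{Hom}_{\Omega_4}(P_1,P_2)=0$ so that by Remark \ref{silt-is-tilt} every two-term silting complex in the mutation order $2,1,2,1,\cdots$ is in fact tilting. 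Then I would begin the chain at $\mu_2^-(\Omega_4)=\left[\begin{smallmatrix}0\to P_1\\ \oplus\\ P_2\overset{f}{\to}P_1^{\oplus k}\end{smallmatrix}\right]$, where $k=\dim\mathsf{Hom}_{\Omega_4}(P_2,P_1)$ and $f$ records the generating homomorphisms. The goal is to identify $\mathsf{End}_{\mathcal{K}_{\Omega_4}}(\mu_2^-(\Omega_4))$ with some algebra $\Gamma$ by direct computation of the $\mathsf{Hom}$-spaces between $X=(0\to P_1)$ and $Y=(P_2\to P_1^{\oplus k})$, exactly as in Example \ref{Omega-2} and Proposition \ref{Omega-1}.

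The key structural step is to locate a derived self-duality that forces the endomorphism algebras along the chain to alternate between $\Gamma$ and $\Gamma^{\mathrm{op}}$ (or between $\Omega_4$ and $\Omega_4^{\mathrm{op}}$). Once I show $\mathsf{End}_{\mathcal{K}_{\Omega_4}}(\mu_2^-(\Omega_4))\simeq\Omega_4^{\mathrm{op}}$ (or that it is derived-equivalent to $\Omega_4$ with the roles of the vertices swapped), Lemma \ref{lemma-endomorphism-algebra} propagates this to every subsequent mutation, and Lemma \ref{lemma-g-vector} transports the $g$-vector computation across the equivalence $F$. This yields a mutation chain $\Omega_4=T_0\to T_1\to T_2\to\cdots$ with $\mathsf{End}(T_{2n})\simeq\Omega_4$ and $\mathsf{End}(T_{2n+1})\simeq\Omega_4^{\mathrm{op}}$, and an explicit recursion for $g(T_n)$ governed by a fixed $2\times 2$ integer matrix. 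Showing that the resulting sequence of $g$-vectors never reaches $(-1,0)\oplus(0,-1)$ then gives $\tau$-tilting infiniteness by Proposition \ref{prop-g-vector-injection} together with the form \eqref{the-form-of-Hasse-quiver} of the Hasse quiver.

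For minimality I would first determine the socle of $\Omega_4$ to list the minimal proper quotients: it suffices to kill each socle generator in turn and, after using Lemma \ref{lem-center} to discard any quotient in which the killed element becomes central, reduce to a short finite list of radical-cube-zero (or otherwise small) quotients $\overline{\Omega}_4$. For each such quotient I would produce the full finite Hasse quiver of $\mathsf{2\text{-}silt}$ by computing $g$-vectors directly, exploiting the anti-automorphism $S_\sigma$ of Lemma \ref{lem-symmetry} whenever a self-duality $\sigma:\overline{\Omega}_4^{\mathrm{op}}\to\overline{\Omega}_4$ is available to halve the work, and invoking Proposition \ref{prop-exactly-two} to close up the quiver. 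The main obstacle I anticipate is the very first endomorphism-algebra computation: because $\Omega_4$ has $\mathsf{rad}^3\neq 0$ and an asymmetric relation set (with $\beta_1$ of nilpotency degree $3$ but $\beta_2$ of degree $2$, plus the mixed relations $\mu\beta_2$ and $\beta_1^2\beta_2$), the $\mathsf{Hom}$-spaces $\mathsf{Hom}_{\mathcal{K}_{\Omega_4}}(Y,Y)$ and $\mathsf{Hom}_{\mathcal{K}_{\Omega_4}}(X,Y)$ will be larger and the multiplication table of generators correspondingly more delicate than in the radical-square-zero examples, so correctly reading off the quiver and relations of $\mathsf{End}_{\mathcal{K}_{\Omega_4}}(\mu_2^-(\Omega_4))$ is where the real care is needed.
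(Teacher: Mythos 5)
Your overall template (compute $P_1,P_2$, observe $\mathsf{Hom}_{\Omega_4}(P_1,P_2)=0$ so the order-$2,1,2,\dots$ chain consists of tilting complexes, compute $\mathsf{End}_{\mathcal{K}_{\Omega_4}}(\mu_2^-(\Omega_4))$, and for minimality kill socle generators and compute finite $g$-vector fans, using Lemma \ref{lem-center} to discard central ones) matches the paper, and the minimality half of your plan is essentially exactly what the paper does: the socle is $K\mu\beta_1^2\oplus K\mu\beta_1\beta_2\oplus K\beta_1^2\oplus K\beta_1\beta_2$, and one reduces to the two quotients $\Omega_4/\langle\mu\beta_1\beta_2\rangle$ and $\Omega_4/\langle\mu\beta_1^2\rangle$, both of which have finite $g$-vector fans (the first shares its $g$-vectors with $\overline{\Omega}_1$).

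The gap is in what you call the ``key structural step.'' You plan to show $\mathsf{End}_{\mathcal{K}_{\Omega_4}}(\mu_2^-(\Omega_4))\simeq\Omega_4^{\text{op}}$ and then run the self-opposite alternation $\Omega_4,\Omega_4^{\text{op}},\Omega_4,\dots$ with a fixed $2\times 2$ matrix recursion on $g$-vectors, as in Example \ref{example-kronecker-alg} and Proposition \ref{Omega-1}. That is not what happens for $\Omega_4$. Since $\dim\mathsf{Hom}_{\Omega_4}(P_2,P_1)=4$, the mutation is $P_2\to P_1^{\oplus 4}$, and the endomorphism algebra of $\mu_2^-(\Omega_4)$ turns out to be an algebra $\Delta_4$ whose quiver has \emph{two} arrows $c_1,c_2$ from vertex $2$ to vertex $1$ and two loops $b_1,b_2$ at vertex $2$, bound by $\langle b_1^3,b_2^2,b_2b_1,b_1c_2,b_2c_1,b_1^2b_2,b_1c_1-b_2c_2\rangle$ (the relation $b_1c_1=b_2c_2$ is what collapses the four Hom generators to two arrows). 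This algebra has $10=\dim_K\Omega_4$ but a different Gabriel quiver ($4$ arrows versus $3$), so it is not $\Omega_4^{\text{op}}$, and no self-duality of $\Omega_4$ is available to propagate an alternation. The correct continuation is the one used for $\Omega_3$: the double arrow $c_1,c_2$ in $\Delta_4$ gives a Kronecker-type structure, so by Example \ref{example-kronecker-alg} the left mutation chain starting with $\Delta_4$ in mutation order $1,2,1,2,\dots$ is infinite, and Lemma \ref{lemma-derived-infinite} then yields $\tau$-tilting infiniteness of $\Omega_4$ after this \emph{single} endomorphism computation --- no iteration, no alternation, and no explicit $g$-vector recursion for $\Omega_4$ itself is needed. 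As written, your plan would stall at the first endomorphism algebra once you see it is not of the expected shape; the fix is to switch from the ``self-opposite alternation'' mechanism of Proposition \ref{Omega-1} to the ``land on a Kronecker-containing algebra and quote Lemma \ref{lemma-derived-infinite}'' mechanism of Proposition \ref{Omega-3}.
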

\begin{proof}
(1) We show that $\Omega_4$ is $\tau$-tilting infinite. Let $P_1$ and $P_2$ be the indecomposable projective $\Omega_4$-modules. Then,
\begin{center}
$P_1=$\scalebox{0.8}{$\vcenter{\xymatrix@C=0.1cm@R=0.2cm{
&e_1\ar@{-}[d]&\\
&\mu\ar@{-}[d]&\\
&\mu\beta_1\ar@{-}[dl]\ar@{-}[dr]&\\
\mu\beta_1^2 && \mu\beta_1\beta_2}}
\simeq \vcenter{\xymatrix@C=0.1cm@R=0.2cm{
&1\ar@{-}[d]&\\
&2\ar@{-}[d]&\\
&2\ar@{-}[dl]\ar@{-}[dr]&\\
2 && 2}}$} and $P_2=$\scalebox{0.8}{$\vcenter{\xymatrix@C=0.01cm@R=0.2cm{
&&e_2\ar@{-}[dl]\ar@{-}[dr]&\\
&\beta_1\ar@{-}[dl]\ar@{-}[dr]&&\beta_2\\
\beta_1^2&&\beta_1\beta_2&}}
\simeq \vcenter{\xymatrix@C=0.01cm@R=0.2cm{
&&2\ar@{-}[dl]\ar@{-}[dr]&\\
&2\ar@{-}[dl]\ar@{-}[dr]&&2\\
2&&2&}}$}.
\end{center}
Obviously,
\begin{center}
$\begin{aligned}
\mathsf{Hom}_{\Omega_4}(P_1,P_1)&=1,\ \mathsf{Hom}_{\Omega_4}(P_1,P_2)=0,\\
\mathsf{Hom}_{\Omega_4}(P_2,P_2)&=\mathsf{span}\{1, \beta_1, \beta_2, \beta_1^2, \beta_1\beta_2\},\\
\mathsf{Hom}_{\Omega_4}(P_2,P_1)&=\mathsf{span}\{\mu, \mu\beta_1, \mu\beta_1^2, \mu\beta_1\beta_2\}.
\end{aligned}$
\end{center}

We start with the following tilting complex,
\begin{center}
$\mu_{2}^-(\Omega_4)=\left [\begin{smallmatrix}
0\longrightarrow P_1\\
\oplus \\
P_2\overset{f}{\longrightarrow} P_1^{\oplus 4}
\end{smallmatrix}  \right ]$, where $f=\left ( \begin{smallmatrix}
\mu\\
\mu\beta_1\\
\mu\beta_1^2\\
\mu\beta_1\beta_2
\end{smallmatrix} \right )$.
\end{center}
Let $X:=(0\longrightarrow P_1)$ and $Y:=(P_2\overset{f}{\longrightarrow} P_1^{\oplus 4})$. Then,
\begin{center}
$\mathsf{Hom}_{\mathcal{K}_{\Omega_4}}(Y,X)=0$, $\mathsf{Hom}_{\mathcal{K}_{\Omega_4}}(X,X)=\{(0,1)\}$,
\end{center}

\begin{center}
$\mathsf{Hom}_{\mathcal{K}_{\Omega_4}}(X,Y)=\mathsf{span} \left \{\left (0, \left ( \begin{smallmatrix}
1\\
0\\
0\\
0
\end{smallmatrix} \right )\right ), \left (0, \left ( \begin{smallmatrix}
0\\
1\\
0\\
0
\end{smallmatrix} \right )\right ), \left (0, \left ( \begin{smallmatrix}
0\\
0\\
1\\
0
\end{smallmatrix} \right )\right ), \left (0, \left ( \begin{smallmatrix}
0\\
0\\
0\\
1
\end{smallmatrix} \right )\right )\right \}$,
\end{center}

\begin{center}
$\begin{aligned}\mathsf{Hom}_{\mathcal{K}_{\Omega_4}}(Y,Y)=\mathsf{span}
&\left\{\begin{matrix}
\left (1, \left ( \begin{smallmatrix}
1&0&0&0\\
0&1&0&0\\
0&0&1&0\\
0&0&0&1
\end{smallmatrix} \right )\right ), \left (\beta_1, \left ( \begin{smallmatrix}
0&1&0&0\\
0&0&1&0\\
0&0&0&0\\
0&0&0&0
\end{smallmatrix} \right )\right ), \left (\beta_2, \left ( \begin{smallmatrix}
0&0&0&0\\
0&0&0&1\\
0&0&0&0\\
0&0&0&0
\end{smallmatrix} \right )\right ),
\end{matrix}\right. \\
&\ \ \left.\begin{matrix}
\left (\beta_1^2, \left ( \begin{smallmatrix}
0&0&1&0\\
0&0&0&0\\
0&0&0&0\\
0&0&0&0
\end{smallmatrix} \right )\right ), \left (\beta_1\beta_2, \left ( \begin{smallmatrix}
0&0&0&1\\
0&0&0&0\\
0&0&0&0\\
0&0&0&0
\end{smallmatrix} \right )\right )
\end{matrix}\right\}.
\end{aligned}$
\end{center}
Set
\begin{center}
$b_1:=\left (\beta_1, \left ( \begin{smallmatrix}
0&1&0&0\\
0&0&1&0\\
0&0&0&0\\
0&0&0&0
\end{smallmatrix} \right )\right )$, $b_2:=\left (\beta_2, \left ( \begin{smallmatrix}
0&0&0&0\\
0&0&0&1\\
0&0&0&0\\
0&0&0&0
\end{smallmatrix} \right )\right )$, $c_1:=\left (0, \left ( \begin{smallmatrix}
0\\
0\\
1\\
0
\end{smallmatrix} \right )\right )$, $c_2:=\left (0, \left ( \begin{smallmatrix}
0\\
0\\
0\\
1
\end{smallmatrix} \right )\right )$,
\end{center}
and then,
\begin{center}
$b_1^3=b_2^2=b_2b_1=b_1c_2=b_2c_1=b_1^2b_2=0$,
\end{center}

\begin{center}
$b_1c_1=b_2c_2=\left (0, \left ( \begin{smallmatrix}
0\\
1\\
0\\
0
\end{smallmatrix} \right )\right )$, $b_1^2c_1=b_1b_2c_2=\left (0, \left ( \begin{smallmatrix}
1\\
0\\
0\\
0
\end{smallmatrix} \right )\right )$.
\end{center}
It turns out that $\mathsf{End}_{\mathcal{K}_{\Omega_4}}(\mu_{2}^-(\Omega_4))$ is isomorphic to $\Delta_4:=KQ/I$ with
\begin{center}
$Q: \xymatrix@C=1cm{1 & 2\ar@<0.5ex>[l]^{c_1}\ar@<-0.5ex>[l]_{c_2} \ar@(dl,dr)_{b_2}\ar@(ul,ur)^{b_1}}$ and
$I=\langle b_1^3,b_2^2,b_2b_1,b_1c_2,b_2c_1, b_1^2b_2, b_1c_1-b_2c_2\rangle$.
\end{center}
Thus, $\Omega_4$ is $\tau$-tilting infinite following Lemma \ref{lemma-derived-infinite}.

(2) For the minimality, we should first consider
\begin{center}
$\widetilde{\Omega}_4:=\Omega_4/\langle x\mu\beta_1\beta_2+y\mu\beta_1^2\rangle$,
\end{center}
since the socle of $\Omega_4$ is $K\mu\beta_1^2\oplus K\mu\beta_1\beta_2\oplus K\beta_1^2\oplus K\beta_1\beta_2$. However, if both $x$ and $y$ are not zero, we may get an algebra isomorphism by replacing $\beta_1$ with $\beta_2-(y/x)\beta_1$. Thus, it suffices to consider
\begin{center}
$\overline{\Omega}_4:=\Omega_4/\langle \mu\beta_1\beta_2\rangle$ and ${\overline{\Omega}_{4}}':=\Omega_4/\langle \mu\beta_1^2\rangle$,
\end{center}

By using Lemma \ref{lem-center}, it is not difficult to find that $\overline{\Omega}_4$ shares the same $g$-vectors with $\overline{\Omega}_1$ which is known to be $\tau$-tilting finite by \cite[Lemma 3.3]{W-two-point}. By direct calculation, the $g$-vectors for ${\overline{\Omega}_4}'$ are given by
\begin{center}
\begin{tikzpicture}[shorten >=1pt, auto, node distance=0cm,
   node_style/.style={font=},
   edge_style/.style={draw=black}]
\node[node_style] (v) at (-2,0) {$\begin{smallmatrix}
(1,0)\\
\oplus \\
(0,1)
\end{smallmatrix}$};
\node[node_style] (v3) at (0,0) {$\begin{smallmatrix}
(1,0)\\
\oplus \\
(3,-1)
\end{smallmatrix}$};
\node[node_style] (v1) at (-2,-2) {$\begin{smallmatrix}
(-1,0)\\
\oplus \\
(0,1)
\end{smallmatrix}$};
\node[node_style] (v2) at (2,0) {$\begin{smallmatrix}
(3,-1)\\
\oplus \\
(2,-1)
\end{smallmatrix}$};
\node[node_style] (v21) at (4,0) {$\begin{smallmatrix}
(1,-1)\\
\oplus \\
(2,-1)
\end{smallmatrix}$};
\node[node_style] (v212) at (6,0) {$\begin{smallmatrix}
(1,-1)\\
\oplus \\
(0,-1)
\end{smallmatrix}$};
\node[node_style] (v0) at (6,-2) {$\begin{smallmatrix}
(-1,0)\\
\oplus \\
(0,-1)
\end{smallmatrix}$.};

\draw[->]  (v) edge node{ } (v1);
\draw[->]  (v1) edge node{ } (v0);
\draw[->]  (v) edge node{ } (v3);
\draw[->]  (v3) edge node{ } (v2);
\draw[->]  (v2) edge node{ } (v21);
\draw[->]  (v21) edge node{ } (v212);
\draw[->]  (v212) edge node{ } (v0);
\end{tikzpicture}
\end{center}
This implies that $\Omega_4$ is minimal $\tau$-tilting infinite.
\end{proof}

\begin{corollary}\label{Omega-5}
Let $\Omega_5:=KQ/\langle \beta^3, \beta\nu, \nu\mu\nu, \nu\mu\beta^2 \rangle$ with
\begin{center}
$Q: \xymatrix@C=1cm{1\ar@<0.5ex>[r]^{\mu}&2\ar@<0.5ex>[l]^{\nu}\ar@(ur,dr)^{\beta}}$.
\end{center}
Then, $\Omega_5$ is minimal $\tau$-tilting infinite.
\end{corollary}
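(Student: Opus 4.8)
The plan is to follow the proof of Proposition~\ref{Omega-4} as closely as possible: I would exhibit a two-term tilting complex $T$ over $\Omega_5$ whose endomorphism algebra is isomorphic to the algebra $\Delta_4$ appearing in that proof, and then conclude that $\Omega_5$ is $\tau$-tilting infinite by Lemma~\ref{lemma-derived-infinite}, invoking the fact---already established in Proposition~\ref{Omega-4}---that the left mutation chain of $\Delta_4$ in mutation order $1,2,1,\cdots$ is infinite. First I would write down the indecomposable projective $\Omega_5$-modules $P_1,P_2$ and the Hom-spaces $\mathsf{Hom}_{\Omega_5}(P_i,P_j)$. The essential new feature, compared with $\Omega_4$, is the path $\mu\nu$: it produces a loop at vertex $1$, so that $\mathsf{End}_{\Omega_5}(P_1)$ is two-dimensional and $\mathsf{Hom}_{\Omega_5}(P_1,P_2)=K\nu\neq 0$. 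Consequently Remark~\ref{silt-is-tilt} is unavailable, and the naive complex $\mu_2^-(\Omega_5)$ cannot have endomorphism algebra $\Delta_4$, since its summand $(0\to P_1)$ already carries a two-dimensional endomorphism ring.

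To produce the correct $T$ I would therefore first mutate at vertex $1$ in order to resolve the loop $\mu\nu$: the complex $\mu_1^-(\Omega_5)$ replaces $(0\to P_1)$ by the cone of the minimal left $\mathsf{add}(P_2)$-approximation $P_1\xrightarrow{\nu}P_2$, after which the endomorphism ring of the mutated summand is expected to reduce to $K$. I would continue the left mutations (staying two-term) until reaching a tilting complex $T$ of the shape required by Lemma~\ref{lemma-derived-infinite}, verify \emph{by hand} that $T$ is tilting---that is, $\mathsf{Hom}_{\mathcal{K}_{\Omega_5}}(T,T[i])=0$ for $i\neq 0$, which can no longer be read off from Remark~\ref{silt-is-tilt}---and then compute the graded Hom-spaces between its summands exactly as in Proposition~\ref{Omega-4}. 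Matching the resulting generators and relations against the presentation of $\Delta_4$ (loops $b_1,b_2$ and arrows $c_1,c_2$ with $\langle b_1^3,b_2^2,b_2b_1,b_1c_2,b_2c_1,b_1^2b_2,b_1c_1-b_2c_2\rangle$) would give $\mathsf{End}_{\mathcal{K}_{\Omega_5}}(T)\simeq\Delta_4$, hence $\Omega_5$ is $\tau$-tilting infinite by Lemma~\ref{lemma-derived-infinite}. This also records that $\Omega_5$ is derived equivalent to $\Omega_4$; as a consistency check, the Cartan matrices of $\Omega_5$ and $\Omega_4$ share the determinant $5$.

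For the minimality I would first determine the socle of $\Omega_5$; I expect $\mathsf{soc}\,\Omega_5=K\mu\beta^2\oplus K\mu\nu\mu\beta\oplus K\beta^2\oplus K\nu\mu\beta$, each summand isomorphic to the simple module at vertex $2$. It then suffices to check that $\Omega_5/\langle s\rangle$ is $\tau$-tilting finite for each of these four one-dimensional socle ideals. For each such quotient I would apply Lemma~\ref{lem-center} to factor out any element of the radical that has become central, reducing to an algebra whose $\tau$-tilting finiteness is already known---typically a quotient of $\overline{\Omega}_1$ or one of the radical-cube-zero algebras covered by Theorem~\ref{result-rad-cube-zero}---and in the remaining cases I would compute the finite $g$-vector fan directly, as in part~(2) of Proposition~\ref{Omega-4}.

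The hard part will be the middle step. Because the loop $\mu\nu$ forces both $\mathsf{Hom}_{\Omega_5}(P_1,P_2)$ and $\mathsf{Hom}_{\Omega_5}(P_2,P_1)$ to be nonzero, I cannot shortcut tilting-ness via Remark~\ref{silt-is-tilt}, so I must both identify the correct sequence of left mutations that resolves the loop and carry out the full endomorphism-algebra computation to confirm that the answer is exactly $\Delta_4$ rather than some other $\tau$-tilting-infinite two-point algebra. The delicate bookkeeping of the relation $b_1c_1-b_2c_2$ together with the monomial relations is where an error would most easily arise; by contrast, the minimality step should be routine once the socle is computed correctly.
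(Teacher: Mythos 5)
Your overall framework (transfer an infinite mutation chain through a derived equivalence given by a two\nobreakdash-term tilting complex, via Lemmas \ref{lemma-endomorphism-algebra}, \ref{lemma-g-vector} and \ref{lemma-derived-infinite}) is the right one, but you run it in the opposite direction from the paper, and the concrete steps you propose break down. The paper does not look for a tilting complex \emph{over} $\Omega_5$ at all: it shows $\Omega_5\simeq \mathsf{End}_{\mathcal{K}_{\Omega_4^{\text{op}}}}(\mu_1^-(\Omega_4^{\text{op}}))$, doing every computation over $\Omega_4^{\text{op}}$, where $\mathsf{Hom}(P_2,P_1)=0$ so Remark \ref{silt-is-tilt} applies and $\mu_1^-(\Omega_4^{\text{op}})$ is automatically tilting; the infinite branch of $\Omega_4^{\text{op}}$ then transports to $\Omega_5$. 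This is exactly why the statement is a corollary of Proposition \ref{Omega-4}. Your direction forces you to confront $\Omega_5$ head on, and two things go wrong. First, $\mu_1^-(\Omega_5)=(P_1\xrightarrow{\nu}P_2)\oplus(0\to P_2)$ is \emph{not} tilting: the map $P_2\to P_1$ given by left multiplication by $\mu\beta^2$ satisfies $\nu\cdot\mu\beta^2=\nu\mu\beta^2=0$, hence defines a nonzero morphism $(0\to P_2)\to (P_1\xrightarrow{\nu}P_2)[-1]$ in $\mathcal{K}_{\Omega_5}$ (and likewise $\mu\nu\mu$, $\mu\nu\mu\beta$). Moreover the branch you chose is the wrong one: since $F$ sends the infinite chain $\mu_1^-(\Omega_4^{\text{op}})\to\mu_2^-\mu_1^-(\Omega_4^{\text{op}})\to\cdots$ to $\Omega_5\to\mu_2^-(\Omega_5)\to\cdots$, the infinite chain of $\Omega_5$ lives in mutation order $2,1,2,\ldots$, not $1,2,1,\ldots$. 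Second, the target algebra cannot be $\Delta_4$: one checks that $\mu_2^-(\Omega_5)=(0\to P_1)\oplus(P_2\to P_1^{\oplus 3})$ \emph{is} tilting, so the dimension of its endomorphism algebra is given by the Euler form; with Cartan data $\dim e_1\Omega_5e_1=2$, $\dim e_2\Omega_5e_1=1$, $\dim e_1\Omega_5e_2=5$, $\dim e_2\Omega_5e_2=5$ and $g$-vectors $(1,0)\oplus(3,-1)$ one gets $2+1+5+5=13$, whereas $\dim\Delta_4=10$. Note also that $\Omega_5$ is derived equivalent to $\Omega_4^{\text{op}}$, hence to $\Delta_4^{\text{op}}$; there is no reason it should be derived equivalent to $\Delta_4$ itself, so even deeper in the chain your matching of generators and relations against $\Delta_4$ is aimed at the wrong presentation. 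Repairing your route would mean either identifying which later complex in the $2,1,\ldots$ chain is tilting with a recognizable ($\tau$-tilting infinite) endomorphism algebra, or --- much more economically --- reversing direction as the paper does.

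On minimality: your right-module socle $K\mu\beta^2\oplus K\mu\nu\mu\beta\oplus K\beta^2\oplus K\nu\mu\beta$ is correct, and checking $\Omega_5/\langle\mu\beta^2\rangle$ in addition to $\Omega_5/\langle\mu\nu\mu\beta\rangle$ is actually more than the paper records (it only treats the latter, reducing via Lemma \ref{lem-center} to $\overline{\Omega}_5=\Omega_5/\langle\nu\mu+\mu\nu\rangle$). But be aware that $\langle\beta^2\rangle$ and $\langle\nu\mu\beta\rangle$ are not minimal ideals (they contain $\mu\beta^2$, resp.\ $\mu\nu\mu\beta$), so those two checks are redundant, while the genuinely minimal ideals are the lines $K(a\mu\beta^2+b\mu\nu\mu\beta)$ inside the two-sided socle $K\mu\beta^2\oplus K\mu\nu\mu\beta\subseteq e_1\Omega_5e_2$; the quotients with $ab\neq 0$ are covered by neither your list nor the paper's argument and would need either an automorphism rescaling one summand or a separate computation.
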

\begin{proof}
By Lemma \ref{lem-oppsite-algebra} and Proposition \ref{Omega-4}, $\Omega_4^\text{op}$ is $\tau$-tilting infinite. We show that   $\Omega_5$ is isomorphic to $\mathsf{End}_{\mathcal{K}_{\Omega_4^\text{op}}}(\mu_{1}^-(\Omega_4^\text{op}))$, and then, $\Omega_5$ is $\tau$-tilting infinite by Lemma \ref{lemma-derived-infinite}.

In the quiver of $\Omega_4^\text{op}$, we use $\mu, \beta_1, \beta_2$ to indicate $\mu^\ast, \beta_1^\ast, \beta_2^\ast$ for simplicity. The indecomposable projective $\Omega_4^\text{op}$-modules are given by
\begin{center}
$P_1=e_1\simeq 1$ and $P_2=$\scalebox{0.8}{$\vcenter{\xymatrix@C=0.2cm@R=0.3cm{
&e_2\ar@{-}[dl]\ar@{-}[d]\ar@{-}[dr]&\\
\beta_1\ar@{-}[d]\ar@{-}[dr]&\mu&\beta_2\ar@{-}[d]\\
\beta_1^2\ar@{-}[d]&\beta_1\mu&\beta_2\beta_1\ar@{-}[d]\\
\beta_1^2\mu&&\beta_2\beta_1\mu}}
\simeq \vcenter{\xymatrix@C=0.5cm@R=0.3cm{
&2\ar@{-}[dl]\ar@{-}[d]\ar@{-}[dr]&\\
2\ar@{-}[d]\ar@{-}[dr]&1&2\ar@{-}[d]\\
2\ar@{-}[d]&1&2\ar@{-}[d]\\
1&&1}}$}.
\end{center}
Then,
\begin{center}
$\mu_{1}^-(\Omega_4^\text{op})=\left [\begin{smallmatrix}
P_1\overset{\mu}{\longrightarrow} P_2\\
\oplus \\
0\longrightarrow P_2
\end{smallmatrix}  \right ]$.
\end{center}
Let $X:=(P_1\overset{\mu}{\longrightarrow} P_2)$ and $Y:=(0\longrightarrow P_2)$. Then,
\begin{center}
$\mathsf{Hom}_{\mathcal{K}_{\Omega_4^{\text{op}}}}(X,Y)=\mathsf{span}\{(0,\beta_2)\}$, $\mathsf{Hom}_{\mathcal{K}_{\Omega_4^{\text{op}}}}(X,X)=\mathsf{span}\{(1,1), (0,\beta_2)\}$,
\end{center}

\begin{center}
$\mathsf{Hom}_{\mathcal{K}_{\Omega_4^{\text{op}}}}(Y,Y)=\mathsf{span} \left \{(0,1), (0, \beta_1),(0, \beta_2),(0, \beta_1^2), (0, \beta_2\beta_1)\right \}$,
\end{center}

\begin{center}
$\mathsf{Hom}_{\mathcal{K}_{\Omega_4^{\text{op}}}}(Y,X)=\mathsf{span} \left \{(0,1), (0, \beta_1),(0, \beta_2),(0, \beta_1^2), (0, \beta_2\beta_1)\right \}$.
\end{center}
Set $m:=(0,1), b:=(0,\beta_1), n:=(0,\beta_2)$. We have
\begin{center}
$b^3=bn=nmn=nmb^2=0$, $mb=(0,\beta_1)$, $nm=mn=mnm=(0,\beta_2)$,

$b^2=mb^2=(0, \beta_1^2)$, $nmb=mnmb=(0,\beta_2\beta_1)$.
\end{center}
It is not difficult to see that $\mathsf{End}_{\mathcal{K}_{\Omega_4^\text{op}}}(\mu_{1}^-(\Omega_4^\text{op}))\simeq \Omega_5$.

We consider the quotient of $\Omega_5$ modulo the ideal generated by $\mu\nu\mu\beta$, since the socle of $\Omega_5$ is $K\mu\nu\mu\beta\oplus K\nu\mu\beta$. By Lemma \ref{lem-center},
\begin{center}
$\mathsf{2\text{-}silt}\ (\Omega_5/\langle \mu\nu\mu\beta \rangle) \simeq \mathsf{2\text{-}silt}\ (\Omega_5/\langle \nu\mu\beta \rangle)\simeq \mathsf{2\text{-}silt}\ (\Omega_5/\langle \nu\mu+\mu\nu \rangle)$.
\end{center}
Define $\overline{\Omega}_5:=\Omega_5/\langle \nu\mu+\mu\nu \rangle$. By a similar calculation to $\overline{\Omega}_1$ (see  \cite[Lemma 3.3]{W-two-point}), the $g$-vectors for $\overline{\Omega}_5$ are
\begin{center}
\begin{tikzpicture}[shorten >=1pt, auto, node distance=0cm,
   node_style/.style={font=},
   edge_style/.style={draw=black}]
\node[node_style] (v) at (-2,0) {$\begin{smallmatrix}
(1,0)\\
\oplus \\
(0,1)
\end{smallmatrix}$};
\node[node_style] (v3) at (0,0) {$\begin{smallmatrix}
(1,0)\\
\oplus \\
(3,-1)
\end{smallmatrix}$};
\node[node_style] (v1) at (-2,-2) {$\begin{smallmatrix}
(-1,1)\\
\oplus \\
(0,1)
\end{smallmatrix}$};
\node[node_style] (v12) at (2,-2) {$\begin{smallmatrix}
(-1,1)\\
\oplus \\
(-1,0)
\end{smallmatrix}$};
\node[node_style] (v2) at (2,0) {$\begin{smallmatrix}
(2,-1)\\
\oplus \\
(3,-1)
\end{smallmatrix}$};
\node[node_style] (v21) at (4,0) {$\begin{smallmatrix}
(2,-1)\\
\oplus \\
(3,-2)
\end{smallmatrix}$};
\node[node_style] (v212) at (6,0) {$\begin{smallmatrix}
(1,-1)\\
\oplus \\
(3,-2)
\end{smallmatrix}$};
\node[node_style] (v2121) at (8,0) {$\begin{smallmatrix}
(1,-1)\\
\oplus \\
(0,-1)
\end{smallmatrix}$};
\node[node_style] (v0) at (8,-2) {$\begin{smallmatrix}
(-1,0)\\
\oplus \\
(0,-1)
\end{smallmatrix}$.};

\draw[->]  (v) edge node{ } (v1);
\draw[->]  (v1) edge node{ } (v12);
\draw[->]  (v12) edge node{ } (v0);
\draw[->]  (v) edge node{ } (v3);
\draw[->]  (v3) edge node{ } (v2);
\draw[->]  (v2) edge node{ } (v21);
\draw[->]  (v21) edge node{ } (v212);
\draw[->]  (v212) edge node{ } (v2121);
\draw[->]  (v2121) edge node{ } (v0);
\end{tikzpicture}
\end{center}
This implies that $\overline{\Omega}_5$ is $\tau$-tilting finite and $\Omega_5$ is minimal $\tau$-tilting infinite.
\end{proof}

\begin{remark}
Let $\Lambda=KQ_3/I$ be a monomial algebra associated with quiver $Q_3$ and $P_1$ the indecomposable projective $\Lambda$-module at vertex 1. Then, $\Lambda$ is $\tau$-tilting infinite if $P_1$ is not uniserial, i.e., $P_1$ is of form
\begin{center}
\scalebox{0.8}{$\vcenter{\xymatrix@C=0.1cm@R=0.1cm{
&1\ar@{-}[d]&\\
&2\ar@{-}[dl]\ar@{-}[dr]&\\
2 && 2}}$, \ $\vcenter{\xymatrix@C=0.1cm@R=0.1cm{
&1\ar@{-}[d]&\\
&2\ar@{-}[d]&\\
&2\ar@{-}[dl]\ar@{-}[dr]&\\
2 && 2}}$, \ $\vcenter{\xymatrix@C=0.1cm@R=0.1cm{
&1\ar@{-}[d]&\\
&2\ar@{-}[d]&\\
&2\ar@{-}[d]&\\
&2\ar@{-}[dl]\ar@{-}[dr]&\\
2 && 2}}$, \ $\cdots$, $\vcenter{\xymatrix@C=0.1cm@R=0.1cm{
&1\ar@{-}[d]&\\
&2\ar@{-}[d]&\\
&\vdots &\\
&2\ar@{-}[dl]\ar@{-}[dr]&\\
2 && 2}}$, \ $\vcenter{\xymatrix@C=0.1cm@R=0.1cm{
&&1\ar@{-}[d]&\\
&&2\ar@{-}[dl]\ar@{-}[dr]&\\
&2\ar@{-}[dl]\ar@{-}[dr] && 2\\
2&&2&}}$, \ $\cdots$.
}
\end{center}
In Proposition \ref{Omega-3} and Proposition \ref{Omega-4}, we established the statement for the first two examples; others can be proved using a similar approach (but more involved). Moreover, if $\Lambda$ is $\tau$-tilting infinite, then the endomorphism algebra $\mathsf{End}_{\mathcal{K}_{\Lambda^\text{op}}}(\mu_{1}^-(\Lambda^\text{op}))$ gives a new $\tau$-tilting infinite algebra by a similar calculation with Corollary \ref{Omega-5}, but it is not necessarily minimal in general.
\end{remark}

\begin{theorem}\label{result-Q(1111)}
Let $\Lambda=KQ/I$ be a monomial algebra with $\mathsf{rad}^5\ \Lambda=0$ and
\begin{center}
$Q: \xymatrix@C=1cm{1\ar@(dl,ul)^{\alpha}\ar@<0.5ex>[r]^{\mu}&2\ar@<0.5ex>[l]^{\nu}\ar@(ur,dr)^{\beta}}$.
\end{center}
Then, $\Lambda$ is $\tau$-tilting finite if and only if it does not have one of $\Omega_1$, $\Omega_2$, $\Omega_5$ and their opposite algebras as a quotient algebra.
\end{theorem}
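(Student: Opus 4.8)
The plan is to follow the two-step pattern already used for Theorem \ref{result-Q1} and Theorem \ref{result-Q2}. The ``only if'' direction is immediate: each of $\Omega_1$, $\Omega_1^{\mathrm{op}}$, $\Omega_2$, $\Omega_5$, $\Omega_5^{\mathrm{op}}$ is $\tau$-tilting infinite (Proposition \ref{Omega-1}, Example \ref{Omega-2}, Corollary \ref{Omega-5}), so if $\Lambda$ admits any of them as a quotient then $\Lambda$ is $\tau$-tilting infinite by Lemma \ref{lem-quotient-and-idempotent}(1). Thus the content lies in the converse, and the task is to show that forbidding these five quotients forces $\Lambda$ to be $\tau$-tilting finite.

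First I would translate the hypothesis into monomial relations. Note that $Q$ is invariant under the symmetry $1\leftrightarrow 2$, $\alpha\leftrightarrow\beta$, $\mu\leftrightarrow\nu$, and that $\Omega_2\simeq\Omega_2^{\mathrm{op}}$, which is why only five algebras appear. Each forbidden algebra embeds into $Q$ in (up to this symmetry) a controlled number of ways, and for a monomial algebra ``having it as a quotient'' is detected by a single maximal defining path lying outside $I$. Carrying this out, the absence of $\Omega_1$ and $\Omega_1^{\mathrm{op}}$ forces $\mu\beta^3,\ \nu\alpha^3,\ \alpha^3\mu,\ \beta^3\nu\in I$; the absence of $\Omega_2$ forces $\alpha\mu\beta,\ \beta\nu\alpha\in I$; and the absence of $\Omega_5$ and $\Omega_5^{\mathrm{op}}$ forces the length-four mixed paths such as $\mu\nu\mu\beta$ (together with its $1\leftrightarrow2$-symmetric and opposite companions) into $I$. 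The crucial consequence is that the loop powers $\alpha^3$ and $\beta^3$ now commute with $\mu$ and $\nu$, since $\alpha^3\mu=\nu\alpha^3=\mu\beta^3=\beta^3\nu=0$, hence they are central elements lying in $\mathsf{rad}\ \Lambda$.

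Next I would peel off these central elements. By Lemma \ref{lem-center} there are poset isomorphisms $\mathsf{2\text{-}silt}\ \Lambda\simeq\mathsf{2\text{-}silt}\ (\Lambda/\langle\alpha^3\rangle)\simeq\mathsf{2\text{-}silt}\ (\Lambda/\langle\alpha^3,\beta^3\rangle)$, so I may assume $\alpha^3=\beta^3=0$. Together with $\mathsf{rad}^5=0$ and the relations above, every nonzero path then has length at most four and lies in a short explicit list, so only finitely many monomial ideals remain. The reduced algebra $\Lambda'$ is, up to the symmetry of Lemma \ref{lem-symmetry} and the anti-isomorphism of Lemma \ref{lem-oppsite-algebra}, a quotient of one of a small number of maximal candidates (playing the role that $\Delta_3$ plays in Theorem \ref{result-Q2} and that $\overline{\Omega}_5$ plays in Corollary \ref{Omega-5}). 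For each maximal candidate I would exhibit the full Hasse quiver $\mathcal{H}(\mathsf{2\text{-}silt}\ \Lambda')$ by listing its $g$-vectors, using Proposition \ref{prop-exactly-two} to close up the quiver from a single computed mutation chain and Lemma \ref{lem-symmetry} to generate the remaining chains by symmetry; finiteness of the resulting fan gives $\tau$-tilting finiteness, which then passes to all quotients by Lemma \ref{lem-quotient-and-idempotent}(1).

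The main obstacle is twofold and lies entirely in the converse. First, the translation of ``no forbidden quotient'' into relations must account for every embedding of the five algebras into $Q$ — including the two $1\leftrightarrow2$-symmetric copies of each $\Omega_i$ and their opposite copies — so that no path escaping $I$ is overlooked; a single missed maximal path would leave a genuinely $\tau$-tilting-infinite algebra uncovered. Second, even after the central reduction the number of surviving monomial ideals is sizeable, and the crux is to organize them into a short list of maximal $\tau$-tilting-finite algebras and to compute each finite $g$-vector fan explicitly. I expect the computation of these fans — where all four arrows remain active and paths of length up to four occur — to be the most laborious step, although each individual case is, as in Proposition \ref{Delta-3}, a finite and routine (if intricate) mutation calculation.
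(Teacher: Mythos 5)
Your proposal follows essentially the same route as the paper: translate the absence of the five forbidden quotients into the membership of $\alpha^3\mu,\mu\beta^3,\beta^3\nu,\nu\alpha^3,\alpha\mu\beta,\beta\nu\alpha$ and the four length-four mixed paths in $I$, strip off central radical elements via Lemma \ref{lem-center}, and then verify $\tau$-tilting finiteness of the surviving maximal algebra(s) by computing the $g$-vector fan with the symmetry of Lemma \ref{lem-symmetry} and Proposition \ref{prop-exactly-two}. The only difference is one of economy: the paper additionally adjoins central sums such as $\mu\nu+\nu\mu$ and $\alpha\mu\nu+\mu\nu\alpha+\nu\alpha\mu$ to collapse all remaining cases onto the single algebra $\Delta_5$, whereas you would enumerate a short list of maximal candidates — a harmless, if slightly more laborious, variant.
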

\begin{proof}
We assume that $\Lambda$ does not have one of $\Omega_1$, $\Omega_2$, $\Omega_5$ and their opposite algebras as a quotient algebra. This is equivalent to saying that
\begin{center}
$\alpha^3\mu, \mu\beta^3, \beta^3\nu, \nu\alpha^3, \alpha\mu\beta, \beta\nu\alpha, \mu\nu\mu\beta, \beta\nu\mu\nu, \nu\mu\nu\alpha,\alpha\mu\nu\mu \in I$.
\end{center}
Taking the condition $\mathsf{rad}^5\ \Lambda=0$ and Lemma \ref{lem-center} into our consideration, we get
\begin{center}
$\begin{aligned}\mathsf{2\text{-}silt}\ \Lambda
&\simeq \mathsf{2\text{-}silt}\ \left(\Lambda/ \left \langle
\begin{matrix}
\alpha^3,\mu\nu\mu\nu, \nu\alpha^2\mu, \alpha^2\mu\nu, \alpha\mu\nu\alpha, \mu\nu\alpha^2\\
\beta^3, \nu\mu\nu\mu, \mu\beta^2\nu, \beta^2\nu\mu, \beta\nu\mu\beta, \nu\mu\beta^2
\end{matrix}\right \rangle\right)\\
&\simeq \mathsf{2\text{-}silt}\ \left(\Lambda/ \left \langle
\begin{matrix}
\alpha^3,\mu\nu\mu\nu, \nu\alpha^2\mu, \alpha^2\mu\nu, \alpha\mu\nu\alpha, \mu\nu\alpha^2\\
\beta^3, \nu\mu\nu\mu, \mu\beta^2\nu, \beta^2\nu\mu, \beta\nu\mu\beta, \nu\mu\beta^2\\
\alpha\mu\nu+\mu\nu\alpha+\nu\alpha\mu, \beta\nu\mu+\nu\mu\beta+\mu\beta\nu
\end{matrix}\right \rangle\right)\\
&\simeq \mathsf{2\text{-}silt}\ \left(\Lambda/ \left \langle
\begin{matrix}
\alpha^3,\mu\nu\mu\nu, \nu\alpha^2\mu, \mu\nu\alpha^2, \beta^3, \nu\mu\nu\mu, \mu\beta^2\nu, \beta^2\nu\mu\\
\mu\nu\alpha+\nu\alpha\mu, \beta\nu\mu+\mu\beta\nu, \alpha\mu\nu, \nu\mu\beta
\end{matrix}\right \rangle\right)\\
&\simeq \mathsf{2\text{-}silt}\ \left(\Lambda/ \left \langle
\alpha^3, \beta^3, \nu\alpha\mu, \mu\beta\nu, \nu\alpha^2\mu, \mu\beta^2\nu, \mu\nu+\nu\mu
\right \rangle\right)
\end{aligned}$.
\end{center}
Set
\begin{center}
$\Delta_5:=KQ/\left \langle
\alpha^3, \beta^3, \mu\nu+\nu\mu, \alpha\mu\beta, \beta\nu\alpha, \nu\alpha\mu, \mu\beta\nu, \nu\alpha^2\mu, \mu\beta^2\nu
\right \rangle$.
\end{center}
It turns out either $\Lambda$ is a quotient of $\Delta_5$ or $\mathsf{2\text{-}silt}\ \Lambda\simeq \mathsf{2\text{-}silt}\ \Delta_5$.

We show that $\Delta_5$ is $\tau$-tilting finite. The indecomposable projective $\Delta_5$-modules are
\begin{center}
$P_1=$\scalebox{0.8}{$\vcenter{\xymatrix@C=0.1cm@R=0.2cm{
&e_1\ar@{-}[dl]\ar@{-}[dr]&&&\\
\mu\ar@{-}[d]&&\alpha\ar@{-}[dl]\ar@{-}[dr]&&\\
\mu\beta\ar@{-}[d]&\alpha\mu&&\alpha^2\ar@{-}[dl]&\\
\mu\beta^2&&\alpha^2\mu&& }}$} $\simeq$ \scalebox{0.7}{$\vcenter{\xymatrix@C=0.2cm@R=0.2cm{
&1\ar@{-}[dl]\ar@{-}[dr]&&&\\
2\ar@{-}[d]&&1\ar@{-}[dl]\ar@{-}[dr]&&\\
2\ar@{-}[d]&2&&1\ar@{-}[dl]&\\
2&&2&& }}$} and $P_2=$\scalebox{0.8}{$\vcenter{\xymatrix@C=0.1cm@R=0.2cm{
&e_2\ar@{-}[dl]\ar@{-}[dr]&&&\\
\nu\ar@{-}[d]&&\beta\ar@{-}[dl]\ar@{-}[dr]&&\\
\nu\alpha\ar@{-}[d]&\beta\nu&&\beta^2\ar@{-}[dl]&\\
\nu\alpha^2&&\beta^2\nu&& }}$} $\simeq$ \scalebox{0.7}{$\vcenter{\xymatrix@C=0.2cm@R=0.2cm{
&2\ar@{-}[dl]\ar@{-}[dr]&&&\\
1\ar@{-}[d]&&2\ar@{-}[dl]\ar@{-}[dr]&&\\
1\ar@{-}[d]&1&&2\ar@{-}[dl]&\\
1&&1&& }}$}.
\end{center}
There exist two algebra isomorphisms $\sigma, \sigma':{\Delta}_5^\text{op}\rightarrow \Delta_5$ satisfying
\begin{center}
$\sigma(e_1^\ast)=e_2$, $\sigma(e_2^\ast)=e_1$ and $\sigma'(e_1^\ast)=e_1$, $\sigma'(e_2^\ast)=e_2$.
\end{center}
By similar analysis in the proof of Theorem \ref{result-rad-cube-zero} and Proposition \ref{Delta-3}, we infer that the $g$-vectors for $\Delta_5$ are
\begin{center}
\begin{tikzpicture}[shorten >=1pt, auto, node distance=0cm,
   node_style/.style={font=},
   edge_style/.style={draw=black}]
\node[node_style] (v0) at (0,0) {$\begin{smallmatrix}
(1,0)\\
\oplus \\
(0,1)
\end{smallmatrix}$};
\node[node_style] (v1) at (2,0) {$\begin{smallmatrix}
(-1,3)\\
\oplus \\
(0,1)
\end{smallmatrix}$};
\node[node_style] (v12) at (4,0) {$\begin{smallmatrix}
(-1,3)\\
\oplus \\
(-1,2)
\end{smallmatrix}$};
\node[node_style] (v121) at (6,0) {$\begin{smallmatrix}
(-2,3)\\
\oplus \\
(-1,2)
\end{smallmatrix}$};
\node[node_style] (v1212) at (8,0) {$\begin{smallmatrix}
(-2,3)\\
\oplus \\
(-1,1)
\end{smallmatrix}$};
\node[node_style] (v12121) at (10,0) {$\begin{smallmatrix}
(-3,2)\\
\oplus \\
(-1,1)
\end{smallmatrix}$};
\node[node_style] (v121212) at (12,0) {$\begin{smallmatrix}
(-3,2)\\
\oplus \\
(-2,1)
\end{smallmatrix}$};
\node[node_style] (v1212121) at (12,-2) {$\begin{smallmatrix}
(-3,1)\\
\oplus \\
(-2,1)
\end{smallmatrix}$};
\node[node_style] (v12121212) at (12,-4) {$\begin{smallmatrix}
(-3,1)\\
\oplus \\
(-1,0)
\end{smallmatrix}$};
\node[node_style] (v) at (12,-6) {$\begin{smallmatrix}
(-1,0)\\
\oplus \\
(0,-1)
\end{smallmatrix}$};
\node[node_style] (v2) at (0,-2) {$\begin{smallmatrix}
(1,0)\\
\oplus \\
(3,-1)
\end{smallmatrix}$};
\node[node_style] (v21) at (0,-4) {$\begin{smallmatrix}
(2,-1)\\
\oplus \\
(3,-1)
\end{smallmatrix}$};
\node[node_style] (v212) at (0,-6) {$\begin{smallmatrix}
(2,-1)\\
\oplus \\
(3,-2)
\end{smallmatrix}$};
\node[node_style] (v2121) at (2,-6) {$\begin{smallmatrix}
(1,-1)\\
\oplus \\
(3,-2)
\end{smallmatrix}$};

\node[node_style] (v21212) at (4,-6) {$\begin{smallmatrix}
(1,-1)\\
\oplus \\
(2,-3)
\end{smallmatrix}$};
\node[node_style] (v212121) at (6,-6) {$\begin{smallmatrix}
(1,-2)\\
\oplus \\
(2,-3)
\end{smallmatrix}$};
\node[node_style] (v2121212) at (8,-6) {$\begin{smallmatrix}
(1,-2)\\
\oplus \\
(1,-3)
\end{smallmatrix}$};
\node[node_style] (v21212121) at (10,-6) {$\begin{smallmatrix}
(0,-1)\\
\oplus \\
(1,-3)
\end{smallmatrix}$};
\draw[->]  (v0) edge node{\ } (v1);
\draw[->]  (v1) edge node{\ } (v12);
\draw[->]  (v12) edge node{\ } (v121);
\draw[->]  (v121) edge node{\ } (v1212);
\draw[->]  (v1212) edge node{\ } (v12121);
\draw[->]  (v12121) edge node{\ } (v121212);
\draw[->]  (v121212) edge node{\ } (v1212121);
\draw[->]  (v1212121) edge node{\ } (v12121212);
\draw[->]  (v12121212) edge node{\ } (v);
\draw[->]  (v0) edge node{\ } (v2);
\draw[->]  (v2) edge node{\ } (v21);
\draw[->]  (v21) edge node{\ } (v212);
\draw[->]  (v212) edge node{\ } (v2121);
\draw[->]  (v2121) edge node{\ } (v21212);
\draw[->]  (v21212) edge node{\ } (v212121);
\draw[->]  (v212121) edge node{\ } (v2121212);
\draw[->]  (v2121212) edge node{\ } (v21212121);
\draw[->]  (v21212121) edge node{\ } (v);
\end{tikzpicture}.
\end{center}
We then deduce that $\Lambda$ is also $\tau$-tilting finite.
\end{proof}

\subsection{Derived equivalence of the Kronecker algebra}
\ \\
\vspace{-0.3cm}

\begin{proposition}\label{result-derived-eq-kronecker}
If $\Lambda$ is derived equivalent to the Kronecker algebra $\Delta_1=K(\xymatrix@C=0.7cm{1\ar@<0.5ex>[r]^{ }\ar@<-0.5ex>[r]_{ }&2})$, then $\Lambda$ is isomorphic to $\Delta_1$ or $\Delta_1^{\emph{op}}$.
\end{proposition}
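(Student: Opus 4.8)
The plan is to realize any such $\Lambda$ as the endomorphism algebra of a tilting complex over $\Delta_1$ and then to show that, up to shift, every tilting complex over $\Delta_1$ already lies in $\mathsf{2\text{-}silt}\ \Delta_1$, where the endomorphism algebras have been computed in Example \ref{example-kronecker-alg}. First, by Rickard's theorem (recalled after Proposition \ref{lemma-original}), the derived equivalence $\mathsf{D^b(mod}\ \Lambda)\simeq \mathsf{D^b(mod}\ \Delta_1)$ yields a basic tilting complex $T\in \mathcal{K}_{\Delta_1}$ with $\Lambda\simeq \mathsf{End}_{\mathcal{K}_{\Delta_1}}(T)$. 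Since $\Delta_1$ has exactly two simple modules, $T$ has exactly two indecomposable direct summands $T_1,T_2$, and as $\mathsf{End}_{\mathcal{K}_{\Delta_1}}(T)=\mathsf{End}_{\mathcal{K}_{\Delta_1}}(T[n])$ we are free to shift $T$. Thus the task reduces to classifying, up to shift and isomorphism, the basic tilting complexes of $\Delta_1$ together with their endomorphism algebras.

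The key reduction is to prove that, after a suitable shift, $T$ is a two-term complex, i.e. $T\in \mathsf{2\text{-}silt}\ \Delta_1$. Here I would use that $\Delta_1$ is hereditary: by Happel's description of $\mathsf{D^b(mod}\ \Delta_1)=\mathcal{K}_{\Delta_1}$, every indecomposable object is a stalk $M[n]$ with $M$ an indecomposable $\Delta_1$-module, so $T_1\simeq M_1[n_1]$ and $T_2\simeq M_2[n_2]$ with $0=n_1\leq n_2$ after shifting. Because $\Delta_1$ is connected, $\mathcal{K}_{\Delta_1}$ is indecomposable as a triangulated category; if $n_2-n_1\geq 2$ then heredity forces $\mathsf{Hom}_{\mathcal{K}_{\Delta_1}}(T_i,T_j[k])=0$ for $i\neq j$ and all $k$, so $\mathsf{thick}\ T$ would split as a nontrivial orthogonal sum, contradicting $\mathsf{thick}\ T=\mathcal{K}_{\Delta_1}$. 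Hence $n_2-n_1\in\{0,1\}$. If $n_2=n_1=0$, then $T$ is an ordinary tilting $\Delta_1$-module, and since $\mathsf{gl.dim}\ \Delta_1=1$ its minimal projective resolution is two-term, so $T\in \mathsf{2\text{-}silt}\ \Delta_1$. If $n_2=1$, the silting and tilting vanishing conditions say that $(M_1,M_2)$ is an exceptional pair for which $M_2$ is completely orthogonal to $M_1$ while $\mathsf{Ext}^1_{\Delta_1}(M_1,M_2)\neq 0$; a short inspection of $\mathsf{mod}\ \Delta_1$ (its only rigid indecomposables are preprojective or preinjective, and checking the possible orientations of $\mathsf{Hom}$ and $\mathsf{Ext}^1$ among these) forces $M_1\simeq S_1$ and $M_2\simeq S_2$. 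As $S_2$ is projective and $S_1$ has a two-term projective resolution, $T\simeq S_1\oplus S_2[1]$ is again two-term, so $T\in \mathsf{2\text{-}silt}\ \Delta_1$ in all cases.

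It then remains to read off the endomorphism algebras of the tilting complexes inside $\mathsf{2\text{-}silt}\ \Delta_1$. The whole poset $\mathsf{2\text{-}silt}\ \Delta_1$ and its Hasse quiver are described in Remark \ref{Hasse-quiver-kronecker-alg}, and Example \ref{example-kronecker-alg} computes that along the infinite left mutation chain in mutation order $2,1,2,1,\cdots$ the endomorphism algebras alternate between $\Delta_1$ and $\Delta_1^{\text{op}}$. Every two-term silting complex in that mutation order is tilting because $\mathsf{Hom}_{\Delta_1}(P_1,P_2)=0$ (Remark \ref{silt-is-tilt}); together with $\Delta_1$ and $\Delta_1[1]$, whose endomorphism algebra is $\Delta_1$, these exhaust the tilting complexes of $\mathsf{2\text{-}silt}\ \Delta_1$ (the remaining member $\mu_1^-(\Delta_1)$ of the short branch admits a nonzero morphism to its own negative shift and so is not tilting). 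Consequently $\mathsf{End}_{\mathcal{K}_{\Delta_1}}(T)$ is isomorphic to $\Delta_1$ or $\Delta_1^{\text{op}}$, which gives the claim.

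The main obstacle is the two-term reduction of the second paragraph. For a general hereditary algebra a tilting complex can genuinely spread over several degrees and produce non-hereditary iterated-tilted endomorphism algebras, so the conclusion really relies on the special representation theory of the Kronecker algebra: the absence of regular rigid modules and the essentially unique completely one-sided orthogonal exceptional pair. Making the adjacent-degree case watertight, i.e. ruling out every exceptional pair other than $(S_1,S_2)$, is the step that requires the most care; the remaining two paragraphs are then bookkeeping on top of the computations already carried out in Example \ref{example-kronecker-alg}.
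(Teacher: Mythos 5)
Your proposal is correct, but it reaches the key classification step by a genuinely different route than the paper. The paper imports the complete description of the silting quiver of $\Delta_1$ from Aihara--Iyama (silting-connectedness plus their Example 2.46), so that every silting complex is of the form $(X_n\oplus X_{n+1}[k])[\ell]$, and then discards the non-tilting ones by exhibiting a nonzero map $X_n\to X_{n+1}$, leaving exactly the shifts of the two-term tilting complexes. You instead never classify all silting complexes: you use that $\Delta_1$ is hereditary, so every indecomposable of $\mathcal{K}_{\Delta_1}$ is a stalk complex $M[n]$, and then the tilting vanishing conditions confine the two summands to adjacent or equal degrees (the gap $\geqslant 2$ case forcing a full orthogonal splitting of $\mathsf{thick}\ T$, impossible since $\mathsf{Hom}_{\Delta_1}(P_2,P_1)\neq 0$); the equal-degree case is a classical tilting module, and the adjacent-degree case is pinned down to $S_1\oplus S_2[1]$ by the fact that the rigid indecomposables of the Kronecker algebra are preprojective or preinjective together with an Euler-form computation. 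Both arguments then terminate identically in Example \ref{example-kronecker-alg}, Remark \ref{Hasse-quiver-kronecker-alg} and Lemma \ref{lem-symmetry}. What your route buys is self-containedness: it replaces the citation of the explicit silting quiver by standard hereditary representation theory, and it isolates exactly which special features of $\Delta_1$ are used (no rigid regular modules, and $(S_1,S_2)$ as the unique completely one-sided orthogonal exceptional pair). What it costs is that the adjacent-degree analysis must be done carefully, as you note; also be aware that in your gap-$\geqslant 2$ step it is heredity \emph{together with} the negative-degree tilting vanishing that kills all Hom-spaces, not heredity alone, and that the alternation of $\Delta_1$ and $\Delta_1^{\text{op}}$ on the half of the long branch adjacent to $\Delta_1[1]$ is obtained from Example \ref{example-kronecker-alg} only after applying the anti-automorphism $S_\sigma$ of Lemma \ref{lem-symmetry}, exactly as the paper does.
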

\begin{proof}
It is well-known from \cite{Rickard-tilting-complex} that $\Lambda$ is derived equivalent to $\Delta_1$ if and only if there exists a tilting complex $T\in \mathcal{K}_{\Delta_1}$ such that $\Lambda \simeq\mathsf{End}_{\mathcal{K}_{\Delta_1}}\ (T)$. In the following, we first list all tilting complexes in $\mathcal{K}_{\Delta_1}$ and then, check the corresponding endomorphism algebras.

We start with the two-term silting complexes in $\mathcal{K}_{\Delta_1}$. Let $P_1$ and $P_2$ be the indecomposable projective $\Delta_1$-modules. Set $X_0:=(0\longrightarrow P_2)$ and $X_1:=(0\longrightarrow P_1)$. Then, the Hasse quiver $\mathcal{H}(\mathsf{2\text{-}silt}\ \Delta_1)$ is displayed by
\begin{center}
$\vcenter{\xymatrix@C=0.6cm{X_0\oplus X_1\ar[d]\ar[rrrrr]&&&&&X_0\oplus X_1[1]\ar[d]\\
X_1\oplus X_2\ar[r]&X_2\oplus X_3\ar[r]&\cdots\ar[r]& X_{-2}\oplus X_{-1}\ar[r]&X_{-1}\oplus X_0[1]\ar[r]&(X_0\oplus X_1)[1]}}$,
\end{center}
where the corresponding $g$-vectors are given in Remark \ref{Hasse-quiver-kronecker-alg}. It is shown in \cite[Theorem 3.1]{AI-silting} that $\Delta_1$ is silting connected, that is, any silting complex in
$\mathcal{K}_{\Delta_1}$ could be obtained from $X_0\oplus X_1$ by iterated irreducible (left or right) silting mutations. It then follows from \cite[Example 2.46]{AI-silting} that the silting quiver of $\Delta_1$ is displayed in Figure 1, in which $\xymatrix@C=1cm{X\ar[r]|-{[1]}&Y}$ means $\xymatrix@C=0.7cm{X\ar[r]&Y[1]}$.
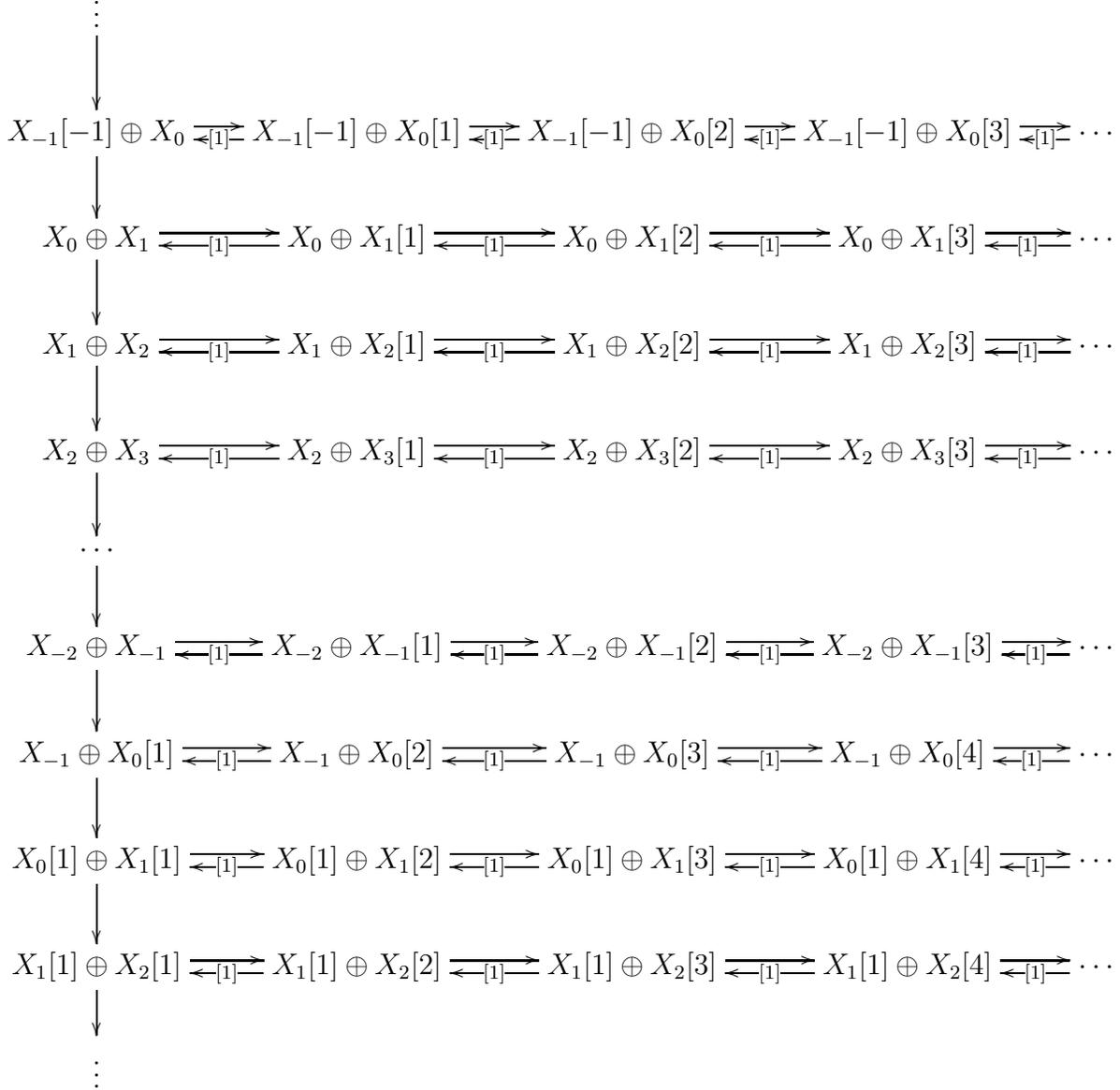
\begin{figure}
\label{Fiture}\caption{The silting quiver of the Kronecker algebra $\Delta_1$}
$\xymatrix@C=0.7cm{
\vdots\ar[d]&\\
X_{-1}[-1]\oplus X_0\ar[d]\ar@<0.5ex>[r]&X_{-1}[-1]\oplus X_0[1]\ar@<0.5ex>[r]\ar@<0.5ex>[l]|-{[1]}&X_{-1}[-1]\oplus X_0[2]\ar@<0.5ex>[r]\ar@<0.5ex>[l]|-{[1]}&X_{-1}[-1]\oplus X_0[3]\ar@<0.5ex>[r]\ar@<0.5ex>[l]|-{[1]}&\cdots\ar@<0.5ex>[l]|-{[1]}\\
X_0\oplus X_1\ar[d]\ar@<0.5ex>[r]&X_0\oplus X_1[1]\ar@<0.5ex>[r]\ar@<0.5ex>[l]|-{[1]}&X_0\oplus X_1[2]\ar@<0.5ex>[r]\ar@<0.5ex>[l]|-{[1]}&X_0\oplus X_1[3]\ar@<0.5ex>[r]\ar@<0.5ex>[l]|-{[1]}&\cdots\ar@<0.5ex>[l]|-{[1]}\\
X_1\oplus X_2\ar[d]\ar@<0.5ex>[r]&X_1\oplus X_2[1]\ar@<0.5ex>[r]\ar@<0.5ex>[l]|-{[1]}&X_1\oplus X_2[2]\ar@<0.5ex>[r]\ar@<0.5ex>[l]|-{[1]}&X_1\oplus X_2[3]\ar@<0.5ex>[r]\ar@<0.5ex>[l]|-{[1]}&\cdots\ar@<0.5ex>[l]|-{[1]}\\
X_2\oplus X_3\ar[d]\ar@<0.5ex>[r]&X_2\oplus X_3[1]\ar@<0.5ex>[r]\ar@<0.5ex>[l]|-{[1]}&X_2\oplus X_3[2]\ar@<0.5ex>[r]\ar@<0.5ex>[l]|-{[1]}&X_2\oplus X_3[3]\ar@<0.5ex>[r]\ar@<0.5ex>[l]|-{[1]}&\cdots\ar@<0.5ex>[l]|-{[1]}\\
\cdots\ar[d]&\\
X_{-2}\oplus X_{-1}\ar[d]\ar@<0.5ex>[r]&X_{-2}\oplus X_{-1}[1]\ar@<0.5ex>[r]\ar@<0.5ex>[l]|-{[1]}&X_{-2}\oplus X_{-1}[2]\ar@<0.5ex>[r]\ar@<0.5ex>[l]|-{[1]}&X_{-2}\oplus X_{-1}[3]\ar@<0.5ex>[r]\ar@<0.5ex>[l]|-{[1]}&\cdots\ar@<0.5ex>[l]|-{[1]}\\
X_{-1}\oplus X_0[1]\ar[d]\ar@<0.5ex>[r]&X_{-1}\oplus X_0[2]\ar@<0.5ex>[r]\ar@<0.5ex>[l]|-{[1]}&X_{-1}\oplus X_0[3]\ar@<0.5ex>[r]\ar@<0.5ex>[l]|-{[1]}&X_{-1}\oplus X_0[4]\ar@<0.5ex>[r]\ar@<0.5ex>[l]|-{[1]}&\cdots\ar@<0.5ex>[l]|-{[1]}\\
X_0[1]\oplus X_1[1]\ar[d]\ar@<0.5ex>[r]&X_0[1]\oplus X_1[2]\ar@<0.5ex>[r]\ar@<0.5ex>[l]|-{[1]}&X_0[1]\oplus X_1[3]\ar@<0.5ex>[r]\ar@<0.5ex>[l]|-{[1]}&X_0[1]\oplus X_1[4]\ar@<0.5ex>[r]\ar@<0.5ex>[l]|-{[1]}&\cdots\ar@<0.5ex>[l]|-{[1]}\\
X_1[1]\oplus X_2[1]\ar[d]\ar@<0.5ex>[r]&X_1[1]\oplus X_2[2]\ar@<0.5ex>[r]\ar@<0.5ex>[l]|-{[1]}&X_1[1]\oplus X_2[3]\ar@<0.5ex>[r]\ar@<0.5ex>[l]|-{[1]}&X_1[1]\oplus X_2[4]\ar@<0.5ex>[r]\ar@<0.5ex>[l]|-{[1]}&\cdots\ar@<0.5ex>[l]|-{[1]}\\
\vdots&}$
\end{figure}

We define $\mathcal{X}:=\{X_0\oplus X_1, X_1\oplus X_2, X_2\oplus X_3, \cdots, X_{-1}\oplus X_0[1],(X_0\oplus X_1)[1]\}$ and
\begin{center}
$\mathcal{X}[\mathbb{Z}]:=\cdots\cup \mathcal{X}[-1]\cup \mathcal{X}\cup \mathcal{X}[1]\cup \cdots$,
\end{center}
where $\mathcal{X}[n]:=\{X[n]\mid X\in \mathcal{X}\}$ for any $n\in \mathbb{Z}$. Since any element in $\mathcal{X}$ is a tilting complex as we mentioned in Example \ref{example-kronecker-alg}, so is each element in $\mathcal{X}[\mathbb{Z}]$. On the other hand, $\mathsf{Hom}_{\mathcal{K}_{\Delta_1}}(X_n,X_{n+1})\neq 0$ for any $n\in \mathbb{Z}$ (we set $X_{n+1}=X_0[1]$ if $n=-1$) by the definition of left mutations. Therefore, the silting complex of the form $(X_n\oplus X_{n+1}[k])[\ell]$ with $k\ge 1, \ell \in \mathbb{Z}$ is not tilting. It turns out that $\mathcal{X}[\mathbb{Z}]$ exhausts all tilting complexes in $\mathcal{K}_{\Delta_1}$, which is exactly the leftmost column in Figure 1.

By Lemma \ref{lem-symmetry} and Example \ref{example-kronecker-alg}, we find that $\mathsf{End}_{\mathcal{K}_{\Delta_1}}\ (X)\simeq \Delta_1$ or $\Delta_1^\text{op}$ for any $X\in \mathcal{X}$. Then, the statement follows obviously from the definition of $\mathcal{X}[\mathbb{Z}]$.
\end{proof}

%%%%%%%%%%%%%%%%%%%%%%%%%%%%%%%%%%%%%%%%%%%%%%%%%%%%%%%%%%%%%%%%%%%%%%%%%%%%%%%%%%%%%%%%%%
\section*{Acknowledgements}
The author is grateful to Takuma Aihara for his insightful suggestions and kind help during the early stages of the author's study. The author would also like to thank Toshitaka Aoki and Kengo Miyamoto for many nice discussions.
The author is partially supported by the National Key R$\&$D Program of China (Grant No. 2020YFA0713000) and China Postdoctoral Science Foundation (Grant No. 315251).

%%%%%%%%%%%%%%%%%%%%%%%%%%%%%%%%%%%%%%%%%%%%%%%%%%%%%%%%

 \ \\
%%%%%%%%%%%%%%%%%%%%%%%%%%%%%%%%%%%%%%%%%%%%%%%%%%%%%%%%

\begin{thebibliography}{AAAA}
\setlength{\baselineskip}{14.2pt}
\bibitem[Ad]{Ada-rad-square-0}
{T. Adachi},
Characterizing $\tau$-tilting finite algebras with radical square zero.
{\it Proc. Amer. Math. Soc.} {\bf 144} (2016), no. 11, 4673--4685.

\bibitem[Ao]{Aoki-sign-decomposition}
{T. Aoki},
Classifying torsion classes for algebras with radical square zero via sign decomposition.
{\it J. Algebra} {\bf 610} (2022), 167--198.

\bibitem[As]{Asai}
{S. Asai},
Semibricks.
{\it Int. Math. Res. Not.}, (2020), no. 16, 4993--5054.

\bibitem[Au]{Au-silting}
{J. August},
On the finiteness of the derived equivalence classes of some stable endomorphism rings.
{\it Math. Z.}, {\bf 296} (2020), no. 3-4, 1157--1183.

\bibitem[AI]{AI-silting}
{T. Aihara and O. Iyama},
Silting mutation in triangulated categories.
{\it  J. Lond. Math. Soc.} {\bf 85} (2012), no. 3, 633--668.

\bibitem[AW]{Aihara-Wang}
{T. Aihara and Q. Wang},
A symmetry of silting quivers.
To appear in Glasgow Mathematical Journal, (2022), arXiv: 2205.00472.

\bibitem[AAC]{AAC-Brauer-graph-alg}
{T. Adachi, T. Aihara and A. Chan},
Classification of two-term tilting complexes over Brauer graph algebras.
{\it Math. Z.}, {\bf 290} (2018), no. 1--2, 1--36.

\bibitem[AIR]{AIR}
{T. Adachi, O. Iyama and I. Reiten},
$\tau$-tilting theory.
{\it Compos. Math.}, {\bf 150} (2014), no. 3, 415--452.

\bibitem[ASS]{ASS}
{I. Assem, D. Simson and A. Skowro\'nski},
Elements of the representation theory of associative algebras. Vol. 1.
Techniques of representation theory.
London Mathematical Society Student Texts, {\bf 65}.
{\it Cambridge University Press, Cambridge}, 2006.

\bibitem[BST]{BST-max-green-seq}
{T. Br$\ddot{\text{u}}$stle, D. Smith and H. Treffinger},
Stability conditions, $\tau$-tilting theory and maximal green sequences,
Preprint (2017), arXiv:1705.08227.

\bibitem[DIJ]{DIJ-tau-tilting-finite}
{L. Demonet, O. Iyama and G. Jasso},
$\tau$-tilting finite algebras, bricks, and $g$-vectors.
{\it Int. Math. Res. Not.}, 2019, no. 3, 852--892.

\bibitem[DIRRT]{DIRRT}
{L. Demonet, O. Iyama, N. Reading, I. Reiten and H. Thomas},
Lattice theory of torsion classes.
Preprint (2017), arXiv: 1711.01785.

\bibitem[EJR]{EJR-reduction}
{F. Eisele, G. Janssens and T. Raedschelders},
A reduction theorem for $\tau$-rigid modules.
{\it Math. Z.} {\bf 290} (2018), no. 3-4, 1377--1413.

\bibitem[Mi]{Mizuno-preprojective-alg}
{Y. Mizuno},
Classifying $\tau$-tilting modules over preprojective algebras of Dynkin type.
{\it Math. Z.}, {\bf 277} (2014), no. 3-4, 665--690.

\bibitem[Mo]{Mousavand-biserial-alg}
{K. Mousavand},
$\tau$-tilting finiteness of biserial algebras.
Preprint (2019), arXiv: 1904.11514.

\bibitem[MS]{MS-cycle-finite}
{P. Malicki and A. Skowro$\acute{\text{n}}$ski},
Cycle-finite algebras with finitely many $\tau$-rigid indecomposable modules.
{\it Comm. Algebra} {\bf 14} (2016), no. 5, 2048--2057.

\bibitem[P]{P-gentle}
{P.G. Plamondon},
$\tau$-tilting finite gentle algebras are representation-finite.
{\it Pacific J. Math.}, {\bf 302} (2019), no. 2, 709--716.

\bibitem[R]{Rickard-tilting-complex}
{J. Rickard},
Morita theory for derived categories.
{\it J. Lond. Math. Soc}, {\bf 39 (2)} (1989), 436--456.

\bibitem[ST]{Schroll-Treffinger}
{S. Schroll and H. Treffinger},
A $\tau$-tilting approach to the first Brauer-Thrall conjecture.
Preprint (2020), arXiv: 2004.14221.

\bibitem[W1]{W-two-point}
{Q. Wang},
$\tau$-tilting finiteness of two-point algebras I.
{\it Math. J. Okayama Univ.}, {\bf 64} (2022), 117--141.

\bibitem[W2]{W-schur}
{Q. Wang},
On $\tau$-tilting finiteness of the Schur algebra.
{\it J. Pure Appl. Algebra}, {\bf 226 (1)} (2022), 106818.

\bibitem[W3]{W-simply}
{Q. Wang},
On $\tau$-tilting finite simply connected algebras.
{\it Tsukuba J. Math.} {\bf (1) 46} (2022), 1--37.

\bibitem[Z]{Z-tilted}
{S. Zito},
$\tau$-tilting finite cluster-tilted algebras.
{\it Proc. Edinb. Math. Soc.} {\bf (2) 63} (2020), no. 4, 950--955.

\end{thebibliography}
\end{document}